\newcommand{\rem}[1]{}
\newcommand \al{\alpha}
\newcommand\be{\beta}
\newcommand\ga{\gamma}
\newcommand\de{\delta}
\newcommand\ep{\varepsilon}
\newcommand\ze{\zeta}
\newcommand\et{\eta}
\renewcommand\th{\theta}
\newcommand\ka{\kappa}
\newcommand\la{\lambda}
\newcommand\rh{\rho}
\newcommand\si{\sigma}
\newcommand\ph{\varphi}
\newcommand\ps{\psi}
\newcommand\om{\omega}
\newcommand\Ga{\Gamma}
\newcommand\Th{\Theta}
\newcommand\Si{\Sigma}
\newcommand\Ps{\Psi}
\newcommand\Om{\Omega}
\newcommand\ie{i.e.\ }
\newcommand\oo{{\infty}}
\renewcommand\o{\circ}
\renewcommand\div{\on{div}}
\newcommand\x{\times}
\newcommand\on{\operatorname}
\newcommand\Emb{\on{Emb}}
\newcommand\ex{\on{ex}}
\newcommand\vol{\on{c,vol}}
\newcommand\Ker{\on{Ker}}
\renewcommand\P{\mathcal{P}}
\newcommand\Diff{\on{Diff}}
\newcommand\Per{\on{Per}}
\newcommand\Vol{\on{Length}}
\newcommand\KKS{\on{KKS}}
\newcommand\SE{\on{SE}}
\newcommand\Zero{\on{Zero}}
\newcommand\g{\mathfrak g}
\newcommand\pa{\partial}
\newcommand\h{\mathfrak h}
\newcommand\Gr{\on{Gr}}
\newcommand\ZZ{\mathbb Z}
\newcommand\TT{\mathbb T}
\newcommand\RR{\mathbb{R}}
\newcommand\X{\mathfrak X}
\newcommand\R{\mathcal R}
\renewcommand\O{\mathcal O}
\newenvironment{proof}[1][Proof]{\noindent\textbf{#1.} }{\ \rule{0.5em}{0.5em}}
\def\XXint#1#2#3{{\setbox0=\hbox{$#1{#2#3}{\int}$ }
\vcenter{\hbox{$#2#3$ }}\kern-.5\wd0}}
\definecolor{burgund}{RGB}{153,0,51}      
\begin{document}

\newtheorem{theorem}{Theorem}[section]
\newtheorem{definition}[theorem]{Definition}
\newtheorem{lemma}[theorem]{Lemma}
\newtheorem{remark}[theorem]{Remark}
\newtheorem{proposition}[theorem]{Proposition}
\newtheorem{corollary}[theorem]{Corollary}
\newtheorem{example}[theorem]{Example}



\title{Vortex sheets in ideal 3D fluids, coadjoint orbits,\\ and characters}
\author{Fran\c{c}ois Gay-Balmaz$^{1}$ and Cornelia Vizman$^{2}$ }

\addtocounter{footnote}{1}
\footnotetext{LMD, \'Ecole Normale Sup\'erieure/CNRS, Paris, France.
\texttt{gaybalma@lmd.ens.fr }
\addtocounter{footnote}{1}}

\footnotetext{Department of Mathematics,
West University of Timi\c soara,
Romania.
\texttt{cornelia.vizman@e-uvt.ro}
\addtocounter{footnote}{1} }

\date{ }
\maketitle
\makeatother


\noindent \textbf{AMS Classification:} 53D20; 37K65; 58D10

\noindent \textbf{Keywords:} coadjoint orbits, nonlinear Grassmannians, vortex sheets in ideal fluid, prequantization, characters.

\begin{abstract} 
We describe the coadjoint orbits of the group of volume preserving diffeomorphisms of $\RR^3$ associated to the motion of closed vortex sheets in ideal 3D fluids. We show that these coadjoint orbits can be identified with nonlinear Grassmannians of compact surfaces  enclosing a given volume and endowed with a closed 1-form describing the vorticity density. If the vorticity density has discrete period group and is nonvanishing, the vortex sheet is given by a surface of genus one fibered by its vortex lines over a circle.
We determine the Hamilton equations for such vortex sheets relative to the Hamiltonian function suggested in \cite{khesin} and prove that there are no stationary solutions having rotational symmetries.
These coadjoint orbits are shown to be prequantizable if the period group of the 1-form and the volume enclosed by the surface satisfy an Onsager-Feynman relation, as argued in \cite{goldin2} for the case of open vortex sheets (tubes/ribbons).
We find a character for the prequantizable coadjoint orbits, 
as well as a polarization group on which the character extends, 
which is a first step beyond prequantization. 
\end{abstract}

\section{Introduction}

Coadjoint orbits of volume preserving diffeomorphism groups are the natural phase spaces for regular and singular solutions of the Euler equations of an ideal fluid  \cite{Marsden-Weinstein}. 
In this paper we consider the coadjoint orbits associated to the motion of closed vortex sheet singular solutions of the 3D Euler equations, i.e., vorticities supported by closed surfaces in $ \mathbb{R} ^3$. Such coadjoint orbits are described as a certain class of nonlinear Grassmannians of vortex sheets indexed by the topological type of support, the type of vorticity density, and the enclosed volume. This identification is made via the momentum map associated to the action of the volume preserving diffeomorphism group on manifolds of vortex sheets.
In this realisation of the coadjoint orbits, the Kirillov-Kostant-Souriau symplectic form takes a particularly simple expression reminiscent of the expression of a canonical symplectic form.

If the closed one form describing the vorticity density has a discrete period group and is nonvanishing, the vortex sheet is shown to be given by a surface of genus one fibered by its vortex lines over a circle. For the Hamiltonian suggested in \cite{khesin}, given by the total length of the vortex lines, we determine the Hamilton equations of evolution of a vortex sheet $( \Sigma (t), \beta _ \Sigma(t)) $ to be
\begin{equation}\label{vse}
\dot \Sigma = kB ^{\perp_\Sigma} , \qquad \dot \beta _ \Sigma = \pounds_{(kB^{\top_\Sigma})}  \beta _ \Sigma.
\end{equation}
Here the surface $ \Sigma$ denotes the support of the vortex sheet while the closed 1-form $ \beta _ \Sigma$ describes the vorticity density. Equations \eqref{vse} involve the normal and tangential components of the binormal curvature $kB$, with respect to the support $ \Sigma $. The first equation is reminiscent of the vortex filament equation written as a binormal curvature flow.

These equations preserve the subset of circle-invariant vortex sheets (hence supported on surfaces of revolution), on which it is shown that there are no stationary solutions. 
The expression of the Hamilton equations, as partial differential equations for the parametrization of the vortex sheet, is given on the connected component of circle-invariant vortex sheets fibered by parallel circles (vortex lines).

If a genus one vortex sheet $( \Sigma , \beta _ \Sigma )$ satisfies the Onsager-Feynman relation, written as
\[
a \ell \in 2 \pi \mathbb{Z}  
\]
with $a$ the volume enclosed by $ \Sigma $ and $\ell$ the smallest period of $ \beta _ \Sigma $, we show that the corresponding coadjoint orbit is prequantizable, admits a polarization group, as well as a character that extends to that group. In particular we recover and rigorously justify 
several steps towards the quantization process of vortex sheets presented in \cite{goldin2} for the case of open vortex sheets (tubes/ribbons).

\medskip

Descriptions of coadjoint orbits of diffeomorphism groups in terms of nonlinear Grassmannians have been carrried out for several situations.
For the group of Hamiltonian diffeomorphisms, two classes of coadjoints orbits were described in \cite{Weinstein90}, \cite{Lee2009} and \cite{Haller-Vizman}. 
These classes were obtained via symplectic reduction for the dual pair of momentum maps associated to the Euler equations in \cite{GBV2018} and were generalized to coadjoint orbits of nonlinear flag manifolds in \cite{HV20}.

For the group of volume preserving diffeomorphisms, coadjoint orbits associated to codimension two singular solutions of the Euler equations (e.g.~vortex filaments in 3D) were described as nonlinear Grassmannians in \cite{Marsden-Weinstein} and \cite{Haller-Vizman}. The Hamiltonian evolution of a vortex filament in 3D space is given by the binormal curvature equation (e.g. \cite[Section~3.5]{Khesin-Wendt}). It has been extended to higher dimensional codimension two submanifolds (vortex membranes) in \cite{HV03} and \cite{khesin}, given by a skew mean curvature equation, with respect to a regularized Hamiltonian taken as the volume of the membrane.
Coadjoint orbits associated to codimension one singular solutions were considered in \cite{goldin2} for open vortex sheets in 3D ideal fluids. A Hamiltonian was suggested in \cite{khesin} for vortex sheets, given by the total length of the vortex lines. The motion of vortex sheet was given a geodesic description in \cite{Lo2012} and has been recently formulated within a groupoid framework in \cite{IK}.

\medskip 

The paper is structured as follows. In section \ref{sec_2}, coadjoint orbits of vortex sheets are described as certain classes of  nonlinear Grassmannians and the KKS symplectic form is derived in this description. In section \ref{Sec_3} we focus on the Hamilton equations for vortex sheets with discrete period group and nonvanishing vorticity density. We also consider the invariant submanifold of circle-invariant vortex sheets and show that there are no stationary vortex sheets of this type.
In section \ref{Sec_4} it is shown that a coadjoint orbit satisfying the Onsager-Feynman condition is prequantizable, with prequantum bundle explicitly constructed as the quotient of a certain decorated nonlinear Grassmanian by a discrete group. In section \ref{cara}, a polarization group is given and a character is geometrically constructed that integrates the vortex sheet momenta on the polarization group.

We project to explore the higher dimensional version of these results in a future work.

\section{Coadjoint orbits of vortex sheets}\label{sec_2}

In this section, we describe coadjoint orbits of the group  of compactly supported volume preserving diffeomorphisms, $\Diff_{\vol}(\RR^3)$, associated to the motion of vortex surfaces in ideal 3D fluids,
also called vortex sheets. We also determine the  KKS symplectic form on these orbits.
Coadjoint orbits for codimension two vortex membranes (e.g.~vortex filaments in 3D) were studied in \cite{Marsden-Weinstein}, \cite{Haller-Vizman}. 

Let $S$ be a compact oriented surface
and let $\operatorname{Diff}_+(S)$ denote the group of  orientation preserving diffeomorphisms of $S$. 
Let $\be\in\Om^1(S)$ be a closed 1-form with isolated zeros. By compactness, the zero set $\Zero(\be)$ is finite.
The surface $S$ describes the support of the singular vorticity distribution, while the 1-form $ \beta $ describes its direction and strength.

\subsection{Principal bundles of embeddings}

Consider the Fr\'echet manifold $\operatorname{Emb}(S,\RR^3)$ of embeddings of $S$ into $\RR^3$. It is the total space of the principal $ \operatorname{Diff}_+(S)$ bundle
\begin{equation}\label{prinz}
\pi : \operatorname{Emb}(S,\RR^3) \rightarrow\operatorname{Gr}^S, \quad f \mapsto \Si=f(S) ,
\end{equation}
where $\operatorname{Gr}^S$ denotes the {\it nonlinear Grassmannian} of all oriented surfaces in $\RR^3$ of type $S$ (including the orientations). 
By the transitivity of the action of the Lie algebra of compactly supported divergence free vector fields $ \X_c(\RR^3)$ on $\operatorname{Emb}(S, \RR^3)$, see \cite[Chapter~8]{H76},  the tangent space at $f$ can be written
as $T_f \operatorname{Emb}(S,\RR^3)=\left \{ u \circ f: S \rightarrow T\RR^3 : u \in \mathfrak{X} (\RR^3)\right \}$. 
The tangent space at $\Si$ to the nonlinear Grassmannian
can be identified with the space of sections of the normal bundle $T\Si^\perp$ 
with respect to the Euclidean metric on $\RR^3$:
\begin{equation}\label{Coo}
T_\Si\Gr^S=\Ga(T\Si^{\perp}) \simeq C^\oo(\Si).
\end{equation}
With $n$ denoting the unique unit normal vector field to $\Si$ compatible with the orientations of $\Si$ and of $\RR^3$, the second identification becomes $u_\Si\leftrightarrow u_\Si\cdot n$,
for any normal vector  field $u_\Si$ on $\Si$.
We define the \textit{decorated nonlinear Grassmannian}  $\Gr^{S,\be}$
as a bundle over $\Gr^S$ associated to the principal bundle \eqref{prinz}, namely:
\begin{equation}\label{grom1}
\Gr^{S,\be}:=\Emb(S,\RR^3)\x_{\Diff_+(S)}\O_\be,
\end{equation}
with $\O_\be\subseteq\Om^1(S)$ denoting the $\Diff_+(S)$ orbit of $\be$.
The map 
\begin{equation}\label{phal}
[f,\al]\mapsto \left(f(S),f_*\al\right)=(\Si,\be_\Si),\quad f\in\Emb(S,\RR^3),\ \al\in\O_\be,
\end{equation} 
makes the identification of \eqref{grom1} with the space of oriented surfaces in $\RR^3$ of type $S$, each of them endowed (decorated) with a 1-form of type $ \beta $
\[
\operatorname{Gr}^{S, \beta}=\left \{(\Si, \beta _\Si) : \Si \in \operatorname{Gr}^S,\; \beta _\Si \in \Omega ^1 (\Si)\;\; \text{s.t.} \;\exists\;\Psi \in \operatorname{Diff}_+(S,\Si),\; \Psi ^\ast\beta _\Si =  \beta \right \}.
\]
Note that $ \beta _\Si$  is closed with finite zero set, just like $\be$. 
The associated bundle projection becomes the \textit{forgetful map}
$$
\Gr^{S,\be}\to\Gr^S,\quad (\Si,\be_\Si)\mapsto \Si.
$$
Moreover, formally,
\begin{equation}\label{pib1}
\pi ^\beta : \operatorname{Emb}(S,\RR^3) \rightarrow\operatorname{Gr}^{S, \beta }, \quad f \mapsto (f(S), f _\ast\beta ) =(\Si, \beta _\Si )
\end{equation}
is a principal bundle with structure group $ \operatorname{Diff}_+(S, \beta )$, the group of $\be$-preserving diffeomorphisms that preserve the orientation. Note that it is not a Lie group in general, 
still it has an associated Lie algebra of  $\be$-preserving vector fields 
\begin{equation}\label{xsb}
\X(S,\be)
=\{v\in\X(S):\be(v)=\text{constant}\}.
\end{equation}

\paragraph{Splitting of the tangent space.} The Euclidean metric on $\RR^3$ naturally endows the principal bundle \eqref{prinz} with a principal connection:
\begin{equation}\label{conn}
\om\in\Om^1\big(\Emb(S,\RR^3),\X(S)\big),\quad \om(u\o f)=f^*\big(u|_{ \Sigma }^\top\big),
\quad u\in\X(\RR^3).
\end{equation}
with $ \Sigma = f(S)$. The horizontal lift of $u_ \Sigma \in T_ \Sigma \Gr^{S,\beta }$ at $f \in \Emb(S,\RR^3)$ is $\left( (u_ \Sigma \cdot n) n \right)  \circ f \in T_f \Emb(S,\RR^3)$.
This connection, on its own turn, induces a connection on the associated bundle 
$\operatorname{Gr}^{S, \beta }$.
This splits in a natural way the tangent space to the decorated Grassmannian 
into its horizontal and vertical parts
(similarly to the weighted nonlinear Grassmannians addressed in \cite{GBV2018}).
The tangent space to $\O_\be$, the $\Diff_+(S)$ orbit of $\be$,
is  $\{\pounds_v\be:v\in\X(S)\}= d (\beta( \mathfrak{X}  (S)))
=d C_\be^\oo(S)$, with $C^\oo_{\be}(S)$ denoting the space of smooth functions vanishing on the finite subset $\Zero(\be)$.
This yields the horizontal/vertical decomposition:
\begin{equation}\label{split}
T_{(\Si,\beta _\Si)} \operatorname{Gr}^{S,\beta }
=T_\Si\Gr^S \times d \left(\beta _\Si( \mathfrak{X}  (\Si))\right)
\stackrel{\eqref{Coo}}{=}C^\oo(\Si) \times d C^\oo_{\be_\Si}(\Si).
\end{equation}

The bundle projection $\pi^\be$ in \eqref{pib1} being $\Diff(\RR^3)$ equivariant
for the natural action on the decorated Grassmannian, $\ph\cdot(\Si,\be_\Si)=(\ph(\Si),\ph_*\be_\Si)$, the infinitesimal generators $\ze_u^{\Emb}$ and $\ze_u^{\Gr}$ for $u\in\X(\RR^3)$ are $\pi^\be$ related.
Thus the tangent map to $\pi^\beta $ can be written 
in the splitting \eqref{split} as
\begin{equation}\label{T_pi_beta} 
T_f \pi^\be(\ze_u^{\Emb})=T_f \pi ^\beta (u \circ f)= \big( u|_\Si^\perp, { \pounds _{ u|_\Si^\top}\beta _\Si}\big)=(u|_\Si\cdot n, d  (  i  _{u|^\top_\Si} \beta _\Si))=:\ze^{\Gr}_u(\Si,\be_\Si)
\end{equation}
where $u|_\Si=u|_\Si^\top+u|_\Si^\perp$ is the orthogonal decomposition along $\Si=f(S)\subseteq\RR^3$.


\subsection{Enclosing a constant volume}

Let $\mu$ be the volume form on $\RR^3$ induced by the Euclidean metric.
Given $a>0$, we define the manifold
\begin{equation}\label{emba1}
\operatorname{Emb}_a(S,\RR^3) := \Big\{ f \in\operatorname{Emb}(S,\RR^3) : \int_{S}f ^\ast  \nu =a \Big\},
\end{equation}
where  $\nu\in\Om^2(\RR^3)$ is such that $ \mu =  d  \nu $. One notices that $\operatorname{Emb}_a(S,\RR^3)$ is independent of the choice of $\nu$.
By restriction of the principal bundle \eqref{prinz},  we get another 
principal $ \operatorname{Diff}_+(S)$ bundle
$\pi : \operatorname{Emb}_a(S,\RR^3) \rightarrow\operatorname{Gr}^S_a$,
this time over  the manifold
\[
\operatorname{Gr}^S_a:=\Big\{\Si\in\Gr^S: \int_\Si\nu=a\Big\}
\]
of oriented surfaces in $\RR^3$ that enclose a constant volume $a$.

The tangent spaces are: 
\begin{equation}\label{emba}
T_f \operatorname{Emb}_a(S,\RR^3)=\Big \{ u \circ f: S \rightarrow T\RR^3 : u \in \mathfrak{X}  (\RR^3),\; \int_S f ^\ast  i  _u \mu=0\Big \}
\end{equation}
and
$T_\Si \operatorname{Gr}^S_a=\left \{ u_\Si \in \Gamma (T\Si^\perp) : \int_\Si i  _{u_\Si} \mu =0\right \}$.
Using  the volume form induced on the surface $\Si$ by the Euclidean metric, 
namely  $\mu_\Si= i _{n}\mu$,
the identification in \eqref{Coo} yields, by restriction,
\begin{equation}\label{tngr}
T_\Si \operatorname{Gr}_a^S\simeq C_0^\oo(\Si):=\Big\{f\in C^\oo(\Si): \int_\Si f\mu_\Si=0\Big\}.
\end{equation}
Indeed, $\int_\Si i _{u_\Si}\mu
=\int_\Si (u_\Si\cdot n) i _n\mu=\int_\Si f\mu_\Si$, for all $u_\Si\in\Ga(T\Si^\perp)$.

We consider also the \textit{decorated Grassmannian enclosing a constant volume}
\begin{equation}\label{Gr_a_S_beta} 
\operatorname{Gr} _a^{S, \beta}:=\left \{(\Si, \beta _\Si) \in\Gr^{S,\be} : \Si \in \operatorname{Gr}^S_a\right\},
\end{equation} 
together with the (formal) principal $\operatorname{Diff}_+(S, \beta )$ bundle,
obtained from \eqref{pib1},
\begin{equation}\label{pib2}
\pi ^\beta : \operatorname{Emb}_a(S,\RR^3) \rightarrow\operatorname{Gr}^{S, \beta }_a.
\end{equation}
This time the projection is $\Diff_{\rm vol}(\RR^3)$ equivariant.
From now on, we will refer to the decorated Grassmannian \eqref{Gr_a_S_beta} formally as a \textit{manifold of vortex sheets of type $(S, \beta )$}.

\subsection{Symplectic form on the manifold of vortex sheets}

The 1-form $\be$ on $S$ and the volume form $\mu=d\nu$
on $\RR^3$ jointly define, in a natural way, a 2-form $\om$ on  $\Emb(S,\RR^3)$:
\begin{equation}\label{tinta}
\om(u_1\o f,u_2\o f)=\int_S f^* i _{u_2} i _{u_1}\mu\wedge\be,\quad u_1,u_2\in\X(\RR^3).
\end{equation}
In the hat calculus notation \cite{Vizman}, this form  is written as
\begin{equation}\label{omhat}
\om=\widehat{\mu\cdot\be}=d(\widehat{\nu\cdot\be)}. 
\end{equation}
It is invariant under the commuting actions
of the reparametrization group $\Diff_+(S,\be)$  and of the ambient space
diffeomorphism group  $\Diff_{\rm vol}(\RR^3)$, with infinitesimal generators 
$\zeta_v(f):= Tf\circ v$ and $\zeta_u(f):= u\o f$ respectively.

\begin{proposition}\label{simp}
The restriction of $\om$ to  $\Emb_a(S,\RR^3)$
descends via \eqref{pib2} to a symplectic form 
$\Om$ on $\operatorname{Gr}^{S, \beta }_a$:
\begin{equation}\label{pibeom}
(\pi^\be)^*\Om=\om.
\end{equation}
\end{proposition}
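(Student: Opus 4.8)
The plan is to prove that $\om|_{\Emb_a(S,\RR^3)}$ is a \emph{basic} form for the formal principal bundle $\pi^\be$ of \eqref{pib2}, so that it descends to a closed $2$-form $\Om$, and then to compute $\Om$ in the splitting \eqref{split} and read off its non-degeneracy.

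\emph{Descent.} Since $\om=d(\widehat{\nu\cdot\be})$ is exact it is closed, and its $\Diff_+(S,\be)$-invariance on $\Emb(S,\RR^3)$ (hence on $\Emb_a(S,\RR^3)$) is already recorded, so the one thing to check is that $\om|_{\Emb_a}$ annihilates the vertical vectors $\ze_v(f)=Tf\o v$ with $v\in\X(S,\be)$. Writing $\ze_v(f)=w\o f$ where $w|_\Si$ is the tangent field $f_*v$ on $\Si=f(S)$, and decomposing an arbitrary $u\in\X(\RR^3)$ along $\Si$ as $u|_\Si=u^\top+(u\cdot n)\,n$, one finds that on restriction to $\Si$ only the term carrying the normal component of $u$ survives in $i_u i_w\mu$ (the purely tangential term contracts $\mu$ with three tangent vectors), so that $\iota_\Si^*(i_u i_w\mu)=-(u\cdot n)\,i_w\mu_\Si$ with $\mu_\Si=i_n\mu$ the area form. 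Using the pointwise identity $(i_v\th)\wedge\al=-\al(v)\,\th$ on a surface, this gives
\begin{equation*}
\om\big(\ze_v(f),u\o f\big)=\int_S f^*(i_u i_w\mu)\wedge\be=\int_S\big((u\cdot n)\o f\big)\,\be(v)\,f^*\mu_\Si .
\end{equation*}
Now $v\in\X(S,\be)$ means $\be(v)$ is a constant $k$ by \eqref{xsb}, which comes out of the integral and leaves $k\int_\Si i_u\mu=k\int_S f^*i_u\mu$; and this vanishes precisely because $u\o f\in T_f\Emb_a(S,\RR^3)$, i.e.\ $\int_S f^*i_u\mu=0$ by \eqref{emba}. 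Thus $\om|_{\Emb_a}$ is horizontal; being also invariant it is basic, so $\Om:=s^*(\om|_{\Emb_a})$, for a local section $s$ of $\pi^\be$, is well defined independently of the lift and of $s$, satisfies $(\pi^\be)^*\Om=\om$ on $\Emb_a(S,\RR^3)$, and is closed since $(\pi^\be)^*$ is injective and $(\pi^\be)^*d\Om=d\om=0$.

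\emph{Non-degeneracy.} I would then compute $\Om$ in the splitting \eqref{split}, which for the volume-constrained Grassmannian reads $T_{(\Si,\be_\Si)}\Gr^{S,\be}_a=C_0^\oo(\Si)\times dC^\oo_{\be_\Si}(\Si)$. Lifting $(\ph_i,d\ga_i)$, $i=1,2$, through \eqref{T_pi_beta} by $u_i\o f$ with $u_i\cdot n=\ph_i$ and $i_{u_i^\top}\be_\Si=\ga_i$ (the additive constant that \eqref{T_pi_beta} leaves free being immaterial below), the same restriction computation — now with both slots general — gives $\iota_\Si^*(i_{u_2}i_{u_1}\mu)=\ph_1\,i_{u_2^\top}\mu_\Si-\ph_2\,i_{u_1^\top}\mu_\Si$; wedging with $\be_\Si$ and applying $(i_v\th)\wedge\al=-\al(v)\th$ once more yields the canonical-looking formula
\begin{equation*}
\Om_{(\Si,\be_\Si)}\big((\ph_1,d\ga_1),(\ph_2,d\ga_2)\big)=\int_\Si(\ph_2\ga_1-\ph_1\ga_2)\,\mu_\Si ,
\end{equation*}
which is indeed independent of the constant in $\ga_i$ because $\int_\Si\ph_i\mu_\Si=0$ by the enclosed-volume constraint $\ph_i\in C_0^\oo(\Si)$. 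If $(\ph_1,d\ga_1)$ lies in the kernel of $\Om$, testing against $(\ph_2,0)$ with $\ph_2\in C_0^\oo(\Si)$ arbitrary forces $\int_\Si\ph_2\ga_1\mu_\Si=0$ for all mean-zero $\ph_2$, hence $\ga_1$ constant and $d\ga_1=0$; testing then against $(0,d\ga_2)$ with $\ga_2\in C^\oo_{\be_\Si}(\Si)$ arbitrary forces $\int_\Si\ph_1\ga_2\mu_\Si=0$, which, since $\Zero(\be_\Si)$ is finite and thus $\mu_\Si$-null, upgrades by a cutoff and dominated-convergence argument to $\int_\Si\ph_1\ps\mu_\Si=0$ for all $\ps\in C^\oo(\Si)$, so $\ph_1=0$. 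Hence $\Om$ is weakly non-degenerate.

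\emph{Main obstacle.} The delicate step is the horizontality computation: the content is that $i_{\ze_v}\om|_{\Emb_a}$ collapses to a scalar multiple of the single linear functional $u\o f\mapsto\int_S f^*i_u\mu$, and it is exactly the two restrictions built into \eqref{pib2} — passing from $\Emb$ to $\Emb_a$, which kills that functional on tangent vectors, and from $\Diff_+(S)$ to $\Diff_+(S,\be)$, which makes $\be(v)$ constant so that it factors out of the integral — that conspire to make it vanish. Once the explicit formula for $\Om$ is available, closedness is automatic and non-degeneracy reduces to the routine $L^2$-pairing argument above, the only care needed being the $\mu_\Si$-density of the admissible test functions, supplied by the finiteness of $\Zero(\be_\Si)$.
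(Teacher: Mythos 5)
Your proof is correct, and both the descent step and the non-degeneracy step rest on exactly the same two facts the paper exploits: the constancy of $\be(v)$ for $v\in\X(S,\be)$ combined with the constraint $\int_S f^*i_u\mu=0$ from \eqref{emba}, and the finiteness of $\Zero(\be_\Si)$ to get enough test functions. The packaging, however, is different. The paper stays entirely on $\Emb_a(S,\RR^3)$ and computes the full kernel of $\om$ there, showing in one stroke that it coincides with the vertical distribution of $\pi^\be$: the inclusion ``kernel $\subseteq$ vertical'' is what they write out (pairing against $Tf\o w$ for all $w\in\X(S)$ forces $f^*i_u\mu=0$, hence tangency; pairing against normal directions then forces $\be(v)$ constant), and this simultaneously yields descent and non-degeneracy of $\Om$ without ever writing $\Om$ explicitly. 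You instead verify horizontality upstairs and then pass to the quotient, deriving the canonical expression $\int_\Si(\ph_2\ga_1-\ph_1\ga_2)\mu_\Si$ — which in the paper is deferred to the next proposition, \eqref{cano} — and read off weak non-degeneracy from it by an $L^2$-density argument. Your route costs a little more (you must justify the lifts $(u\cdot n,\,i_{u^\top}\be_\Si)$ realizing the test vectors $(\ph_2,0)$ and $(0,d\ga_2)$, which is harmless since normal and tangential extensions supported away from $\Zero(\be_\Si)$ suffice), but it buys the explicit formula for $\Om$ as a byproduct; the paper's kernel computation is more economical for this proposition alone. The two non-degeneracy arguments are really the same computation viewed upstairs versus downstairs.
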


\begin{proof}
We know that $\om$ in \eqref{tinta} is $\Diff_+(S,\be)$-invariant.
We prove below that the kernel of $\om$ is generated by the $\X(S,\be)$ action.
It follows that the closed 2-form $\om$ on $\Emb_a(S,\RR^3)$ is  $\pi^\beta$-basic, 
thus it  descends to a closed non-degenerate 2-form $\Om$ on the manifold of vortex sheets $\operatorname{Gr} _a^{S, \beta}$. In general $\Om$ is not an exact form, even though its pullback $\om$ is exact.

Notice that all the infinitesimal generators $\zeta_w(f):=Tf\o w$ with $w\in\X(S)$
are tangent to $\Emb_a(S,\RR^3)$ at $f$.
We choose an arbitrary element $u_f:=u\o f\in T_f\Emb_a(S,\RR^3)$ in the kernel of $\om$.
Besides the condition $\int_Sf^* i _{u}\mu=0$ from \eqref{emba}, we also get
$$
0=\om(u_ f,Tf\o w)=-\int_S\be(w)f^* i _{u}\mu,\quad \text{ for all }w\in\X(S).$$
The 1-form $\be$ having a finite number of zeros, we deduce that
$f^* i _{u}\mu=0$, which implies that $u\in\X(\RR^3)$ is tangent to the surface $f(S)$.
This ensures the existence of $v\in\X(S)$ such that $u_ f=Tf\o v$.
It belongs to the kernel of $\om$, hence $0=\om(Tf\o v,u'\o f)=\int_S \be(v)f^* i _{u'}\mu$ for all $u'\in\X(\RR^3)$ such that $\int_Sf^* i _{u'}\mu=0$.
We get that the function $\be(v)$ must be constant,
so, by \eqref{xsb}, the vector field $v\in\X(S,\be)$ and $u_ f=\ze_v(f)$. 
\end{proof}

\medskip

Since the zero set of $\be_\Si$ is finite, the nondegenerate pairing 
$$
C_0^\oo(\Si)\x d  C^\oo_{\be_\Si}(\Si) \to\RR,\quad
(\rh,d\la)=\int_\Si \rh\la \mu_\Si
$$
permits to write the tangent space to $\Gr_a^{S,\be}$
 as
\begin{equation}\label{split_a}
T_{(\Si,\beta _\Si)} \operatorname{Gr}^{S,\beta }_a=T_\Si\Gr_a^S \times  d  \left(\beta _\Si( \mathfrak{X}  (\Si))\right)=C_0^\oo(\Si) \times
C_0^\oo(\Si)^*,
\end{equation}
using also \eqref{tngr} and \eqref{split}.
That these are canonical coordinates for the symplectic form $\Om$ is shown in the next proposition.

\begin{proposition}
The symplectic form  $\Om$  on 
$\operatorname{Gr} _a^{S, \beta}$,  given in Proposition \ref{simp},
becomes
\begin{equation}\label{cano}
\Omega _{(\Si,\be_\Si)}\left( (\rh_1,d\la_1),(\rh_2,d\la_2)\right) =
-\int_\Si(\rh_1\la_2 -\rh_2\la_1 )\mu_\Si
\end{equation}
in the description \eqref{split_a} of the tangent space.
\end{proposition}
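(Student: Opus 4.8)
The plan is to compute the pullback $(\pi^\be)^*\Om = \om$ directly, using formula \eqref{tinta} for $\om$ together with the explicit form \eqref{T_pi_beta} of the tangent map $T_f\pi^\be$, and then match the result against the claimed expression \eqref{cano}. First I would fix a point $(\Si,\be_\Si)$ with $\Si = f(S)$ and pick tangent vectors $(\rh_i, d\la_i) \in T_{(\Si,\be_\Si)}\Gr_a^{S,\be}$ for $i=1,2$. By \eqref{T_pi_beta}, each such vector is the image under $T_f\pi^\be$ of some $u_i \circ f$ with $u_i \in \X(\RR^3)$, where the correspondence is $\rh_i = u_i|_\Si \cdot n$ (the normal part) and $d\la_i = \pounds_{u_i|_\Si^\top}\be_\Si = d\, i_{u_i|_\Si^\top}\be_\Si$ (so $\la_i = i_{u_i|_\Si^\top}\be_\Si$ up to the ambiguity killed by passing to $C_0^\oo(\Si)^*$). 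Then \eqref{pibeom} gives
\[
\Omega_{(\Si,\be_\Si)}\big((\rh_1,d\la_1),(\rh_2,d\la_2)\big) = \om(u_1\circ f, u_2\circ f) = \int_S f^* i_{u_2} i_{u_1}\mu \wedge \be = \int_\Si i_{u_2|_\Si} i_{u_1|_\Si} \mu \wedge \be_\Si.
\]

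The key computational step is to evaluate $i_{u_2|_\Si} i_{u_1|_\Si}(\mu\wedge\be_\Si)$ restricted to $\Si$. Write each $u_i|_\Si = u_i|_\Si^\top + \rh_i\, n$. Using the derivation property of interior product and the fact that $\mu = \mu_\Si \wedge n^\flat$-type decompositions (more precisely $i_n\mu = \mu_\Si$ and $\be_\Si$ annihilates $n$ since it is a form on $\Si$), I would expand the triple contraction. The purely tangential terms $i_{u_2|_\Si^\top} i_{u_1|_\Si^\top}(\mu\wedge\be_\Si)$ vanish upon restriction to $\Si$ because $\mu\wedge\be_\Si$ is a 4-form and contracting twice by vectors tangent to the 2-dimensional $\Si$ still leaves a 2-form that must be further "fed" the normal direction to be nonzero on $\Si$ — in fact the only surviving contributions are the mixed terms where exactly one of the two slots picks up the normal component $\rh_i n$. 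Carrying this out, the term linear in $\rh_1$ (with $u_2$ tangential) produces $\rh_1\, i_{u_2|_\Si^\top}\be_\Si\, \mu_\Si = \rh_1 \la_2\, \mu_\Si$ up to sign, and symmetrically the term linear in $\rh_2$ produces $-\rh_2\la_1\,\mu_\Si$ up to the same sign. Integrating over $\Si$ yields exactly $-\int_\Si(\rh_1\la_2 - \rh_2\la_1)\mu_\Si$, which is \eqref{cano}.

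I expect the main obstacle to be bookkeeping rather than conceptual: getting the signs and the precise identification of $\la_i$ right, and confirming that the tangential–tangential term genuinely drops out. One subtlety worth stating carefully is well-definedness — the right-hand side of \eqref{cano} must not depend on the choice of representative $u_i$ lifting $(\rh_i,d\la_i)$, i.e. it should depend on $\la_i$ only through $d\la_i \in C_0^\oo(\Si)^*$; this follows because changing $u_i|_\Si^\top$ by a vector field $v$ with $i_v\be_\Si$ constant changes $\la_i$ by a constant $c$, and $\int_\Si(\rh_j\, c)\mu_\Si = 0$ precisely because $\rh_j \in C_0^\oo(\Si)$ has zero mean against $\mu_\Si$ — which is exactly the constraint \eqref{tngr} coming from the volume-preserving condition. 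A clean way to organize the whole computation is to note $\mu\wedge\be_\Si$ pulled back to $\Si$ is zero for degree reasons, so $i_{u_2|_\Si}i_{u_1|_\Si}(\mu\wedge\be_\Si)$ can be rewritten, via Cartan-type identities, using only the normal components, which immediately isolates the two surviving terms.
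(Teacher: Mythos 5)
Your proposal is correct and follows essentially the same route as the paper: lift the tangent vectors to ambient vector fields via the (infinitesimally transitive) action, split $u_i|_\Si$ into tangential and normal parts, observe that only the mixed terms survive in $\int_\Si(i_{u_2}i_{u_1}\mu)\wedge\be_\Si$, and check independence of the choice of $\la_i$ using $\int_\Si\rh_j\mu_\Si=0$. The paper carries out exactly this computation, so only the sign bookkeeping you deferred remains to be written out.
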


\begin{proof}
First we notice that the expression of $\Om$ does not depend on the choice of the functions $ \lambda_1 $ and $ \la_2$ in $C^\oo_{\be_\Si}(\Si)$, since $\rh_1\mu_\Si$ and $\rh_2\mu_\Si$ are exact forms on $\Si$.

Because of the infinitesimal transitivity result in Lemma \ref{toto},
it is enough to check the identity on infinitesimal generators
$\ze^{\Gr}_{u_i}(\Si,\be_\Si)=(\rh_i, d\la_i)$,
for $u_i\in\X_{\vol}(\RR^3)$, \ie $ u_i|_\Si^ \perp =\rh_in$ and 
$ i  _{u_i|_\Si^ \top } \beta _\Si=\la_i$, for  $i=1,2$.
Hence
\begin{multline*}
\Om_{(\Si, \beta _\Si)} \left( (\rh_1, d  \lambda_1 ), (\rh_2,  d  \la_2)\right)
\stackrel{\eqref{tinta}}{=}\int_{\Si} ( i  _{ u_2} i  _{u_1 }\mu) 
\wedge \be_{\Si}  
=\int_{\Si} ( i  _{ u_2|_\Si^ \perp } i  _{ u_1|_\Si^ \top }\mu) 
\wedge \be_{\Si}  + \int_{\Si}  ( i  _{ u_2|_\Si^ \top } i  _{u_1|_\Si^ \perp }\mu)\wedge \be_{\Si}  \\
= \int_{\Si} ( i  _{u_1|_\Si^ \top } \be_{\Si})( i  _{ u_2|_\Si^ \perp }\mu )- \int_{\Si}  ( i  _{ u_2|_\Si^ \top }\be_{\Si})( i  _{ u_1|_\Si^ \perp }\mu) =-\int_\Si(\rh_1\la_2-\rh_2\la_1)\mu_\Si.
\end{multline*}
Only the mixed terms survive at step two:
$ i  _{ u_2|_\Si^ \perp } i  _{ u_1|_\Si^ \perp }\mu =0$
since all the normal vectors to $\Si$ are colinear
and  $   i_\Si^*( i  _{ u_2|_\Si^ \top } i  _{u_1|_\Si^ \top }\mu)  =0$ since $i_\Si^*\mu=0$.
\end{proof}

\subsection{Hamiltonian action on manifolds of vortex sheets}

The action of $\Diff_{\rm vol}(\RR^3)$
on the manifold of embeddings that enclose a constant volume, $\Emb_a(S,\RR^3)$, descends to an action on the manifold of vortex sheets
$\Gr_a^{S,\be}$:
\begin{equation}\label{act}
\ph\cdot(\Si,\be_\Si)=(\ph(\Si),\ph_*\be_\Si).
\end{equation}
Every divergence free vector field in $\X_{\rm vol}(\RR^3)$ admits a potential 1-form
$\al\in\Om^1(\RR^3)$, i.e.~$ i _u\mu=d\al$ , which we denote $u=X_ \alpha $. 

\begin{lemma}
The action of $\Diff_{\rm vol}(\RR^3)$ on the symplectic manifold
$(\Gr_a^{S,\be},\Om)$ is Hamiltonian with $\Diff_{\rm vol}(\RR^3)$ equivariant momentum map
\begin{equation}\label{mome}
J: \operatorname{Gr} _a^{S, \beta} \rightarrow\mathfrak{X} _{\rm vol}(\RR^3)^\ast , \quad\left\langle J(\Si,\beta _\Si), X_\alpha  \right\rangle =\int_\Si \al\wedge\beta _\Si,\quad\al\in\Om^1(\RR^3).
\end{equation}
\end{lemma}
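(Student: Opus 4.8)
The plan is to verify that $J$ is a momentum map by showing that for each $X_\alpha\in\X_{\rm vol}(\RR^3)$ the function $J_{X_\alpha}:=\langle J(\cdot),X_\alpha\rangle$ satisfies $\dd J_{X_\alpha}=i_{\zeta_{X_\alpha}^{\Gr}}\Om$, where $\zeta_{X_\alpha}^{\Gr}$ is the infinitesimal generator of the action \eqref{act}. Because the pullback by the surjective submersion $\pi^\be$ of \eqref{pib2} is injective on forms (Proposition \ref{simp}), and $\zeta_{X_\alpha}^{\Gr}$ is $\pi^\be$-related to $\zeta_{X_\alpha}^{\Emb}(f)=X_\alpha\o f$, it suffices to check the identity after pulling everything back to $\Emb_a(S,\RR^3)$: one has $(\pi^\be)^*\Om=\om$ with $\om=d(\widehat{\nu\cdot\be})$ from \eqref{omhat}, and $(\pi^\be)^*J_{X_\alpha}$ is the function $f\mapsto\int_\Si\al\wedge\be_\Si=\int_S f^*\al\wedge\be$. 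So the core computation is $\dd\big(f\mapsto\int_S f^*(\al\wedge\be)\big)=i_{\zeta_{X_\alpha}^{\Emb}}\,d(\widehat{\nu\cdot\be})$ on $\Emb_a(S,\RR^3)$.

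The first step is to differentiate $f\mapsto\int_S f^*(\al\wedge\be)$ in the direction $u\o f$, $u\in\X(\RR^3)$. Using the Cartan formula and that $\be$ is a fixed form on $S$ (not varying), the variation is $\int_S f^*\big(\pounds_u\al\wedge\be\big)=\int_S f^*\big((d i_u\al + i_u d\al)\wedge\be\big)$. The $d i_u\al\wedge\be=d(i_u\al\cdot\be)$ term integrates to zero since $\be$ is closed and $S$ is closed, leaving $\int_S f^*\big(i_u(d\al)\wedge\be\big)=\int_S f^*\big(i_u(i_{X_\alpha}\mu)\wedge\be\big)$, because $d\al=i_{X_\alpha}\mu$ by definition of $X_\alpha$. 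The second step is to expand the right-hand side $i_{\zeta_{X_\alpha}^{\Emb}}\om$ at $f$ applied to a test tangent vector $u\o f$, which by \eqref{tinta} is $\om(X_\alpha\o f,u\o f)=\int_S f^*\, i_u\, i_{X_\alpha}\mu\wedge\be$. The third step is simply to observe that $i_u\,i_{X_\alpha}\mu = -\,i_{X_\alpha}\,i_u\mu$ is (up to the sign bookkeeping that must be done carefully against the orientation conventions in \eqref{tinta}) exactly the integrand obtained in step one, so the two expressions agree.

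I would then close the argument by noting that equivariance of $J$ is automatic from the naturality of the defining formula: for $\ph\in\Diff_{\rm vol}(\RR^3)$ one has $\langle J(\ph(\Si),\ph_*\be_\Si),X_\alpha\rangle=\int_{\ph(\Si)}\al\wedge\ph_*\be_\Si=\int_\Si\ph^*\al\wedge\be_\Si=\langle J(\Si,\be_\Si),X_{\ph^*\al}\rangle$, and $\ph^*\al$ is a potential for $\Ad_{\ph^{-1}}X_\alpha=\ph^*X_\alpha$ since $\ph$ preserves $\mu$; hence $J$ intertwines the action with the coadjoint action. Well-definedness of $J_{X_\alpha}$ — that $\int_\Si\al\wedge\be_\Si$ does not depend on the choice of potential $\al$ for $X_\alpha$ — follows because two potentials differ by a closed $1$-form $d h$ on $\RR^3$, and $\int_\Si dh\wedge\be_\Si=\int_\Si d(h\,\be_\Si)=0$ as $\be_\Si$ is closed on the closed surface $\Si$; this should be remarked at the outset. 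The main obstacle is none of these steps individually but the careful tracking of signs and orientation conventions through \eqref{tinta}, the Cartan formula, and the contraction identity $i_u i_{X_\alpha}\mu=-i_{X_\alpha}i_u\mu$, so that $\dd J_{X_\alpha}$ comes out with the correct sign to equal $i_{\zeta^{\Gr}_{X_\alpha}}\Om$ rather than its negative; I would fix the convention once and propagate it mechanically.
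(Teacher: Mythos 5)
Your proposal is correct and follows essentially the same route as the paper: both check well-definedness via exactness of closed $1$-forms on $\RR^3$, reduce the momentum map condition to $\Emb_a(S,\RR^3)$ via $\pi^\be$, verify $i_{\ze_{X_\al}}\om=d\big(f\mapsto\int_S f^*\al\wedge\be\big)$ (the paper compactly via the hat-calculus identity $i_{\ze_u}\widehat{\mu\cdot\be}=\widehat{d\al\cdot\be}=d(\widehat{\al\cdot\be})$, you by unpacking it with the Cartan formula), and get equivariance from $\Ad_\ph X_\al=X_{\ph_*\al}$. The sign worry at the end is unfounded, since both sides come out directly as $\int_S f^*\big(i_u i_{X_\al}\mu\big)\wedge\be$ with no contraction swap needed.
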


\begin{proof}
The map $J$  doesn't depend on the choice of the potential 1-form $\al$.
Any two potential forms differ by a closed  1-form on $\RR^3$, hence by an exact one, and $\int_\Si d h\wedge\be_\Si=0$ for all $h\in C^\oo(\RR^3)$.

To show that $J$ is a momentum map, we check that for any $u=X_\al$, the infinitesimal generator $\ze_u^{\Gr}$ 
is a Hamiltonian vector field on $\Gr_a^{S,\be}$  with Hamiltonian 
function $H_u(\Si,\be_\Si):=\int_\Si\al\wedge\be_\Si$.
We use the hat calculus in \cite{Vizman}:
on one hand we have 
$$
(\pi^\be)^* i _{\ze_u^{\Gr}}\Om= i _{\ze_u^{\Emb}}\ \om
=\widehat{ i _u\mu\cdot\be}=\widehat{d\al\cdot\be}=d(\widehat{\al\cdot\be}),
$$
on the other hand, the pullback of the function $H_u$ by $\pi^\be$ is 
$$((\pi^\be)^*H_u)(f)=H_u(f(S),f_*\be)=\int_{f(S)}\al\wedge f_*\be
=\int_S f^*\al\wedge\be=(\widehat{\al\cdot\be})(f),$$ 
for all $f\in\Emb_a(S,\RR^3)$.
Knowing that for all $ \ph\in {\operatorname{Diff}_{\rm vol}(\RR^3 )}$ we have $ \operatorname{Ad}_{ { \varphi }} X_ \alpha = X_{ \varphi _\ast \alpha }$,
the equivariance  of the momentum map $J$ follows easily.
\end{proof}

\subsection{Manifolds of vortex sheets as coadjoint orbits}

In this section we show that the connected  components of the manifold of vortex sheets $\Gr_a^{S,\be}$
can be seen as  coadjoint orbits of $\Diff_{\vol}(\RR^3)$, the identity component of the group of compactly supported volume preserving diffeomorphisms of $\RR^3$.
For this we use the following well known fact (found for instance in
the appendix in \cite{HV20}):

\begin{proposition}\label{spcase}
Suppose the action of the Lie group $G$ on the symplectic manifold $(M,\Om)$ is transitive and infinitesimally transitive, with injective equivariant momentum map $J:M\to\g^*$.
Then $J$ is one-to-one onto a coadjoint orbit of $G$.
Moreover, it pulls back the Kostant--Kirillov--Souriau symplectic form $\omega_{\KKS}$ on the coadjoint orbit to the symplectic form $\Om$.
\end{proposition}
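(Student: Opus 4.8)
The plan is to let equivariance do essentially all the work: it pins down both the image of $J$ and, together with infinitesimal transitivity, the two-form $\Om$ itself. First I would fix a base point $m_0\in M$ and set $\mu_0:=J(m_0)\in\g^*$. Equivariance reads $J(\ph\cdot m)=\ph\cdot J(m)$, where the action on the target is the coadjoint action; since the $G$-action on $M$ is transitive, every $m\in M$ has the form $\ph\cdot m_0$, so $J(m)=\ph\cdot\mu_0$, and conversely every $\ph\cdot\mu_0$ is attained. Hence $J(M)$ is exactly the coadjoint orbit $\O$ through $\mu_0$, and, $J$ being injective by hypothesis, $J\colon M\to\O$ is a bijection; being a smooth $G$-equivariant map between two spaces on which $G$ acts transitively, it identifies $M$ with $\O$ as homogeneous $G$-spaces, so comparing $\Om$ with $J^*\om_{\KKS}$ makes sense.

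Next I would establish $J^*\om_{\KKS}=\Om$ pointwise. Differentiating the equivariance relation shows that $J$ intertwines infinitesimal generators, $T_mJ\big(\ze_X(m)\big)=\ze^{\g^*}_X\big(J(m)\big)$ for every $X\in\g$, where $\ze_X$ and $\ze^{\g^*}_X$ denote the infinitesimal generators of the $G$-action on $M$ and of the coadjoint action on $\g^*$. By infinitesimal transitivity the vectors $\ze_X(m)$, $X\in\g$, span $T_mM$, so it suffices to evaluate the two closed two-forms on pairs $\big(\ze_X(m),\ze_Y(m)\big)$. The momentum map property $i_{\ze_X}\Om=d\langle J,X\rangle$ gives
\[
\Om_m\big(\ze_X(m),\ze_Y(m)\big)=\big\langle T_mJ(\ze_Y(m)),X\big\rangle=\big\langle\ze^{\g^*}_Y(J(m)),X\big\rangle=\pm\langle J(m),[X,Y]\rangle,
\]
the last equality being the definition of the coadjoint infinitesimal generator. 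On the other hand, $\pm\langle J(m),[X,Y]\rangle$ is precisely $(\om_{\KKS})_{J(m)}\big(\ze^{\g^*}_X(J(m)),\ze^{\g^*}_Y(J(m))\big)$ by the defining formula of the KKS form, and this in turn equals $(J^*\om_{\KKS})_m\big(\ze_X(m),\ze_Y(m)\big)$ by the intertwining property above. Thus $\Om$ and $J^*\om_{\KKS}$ agree on a spanning set of tangent vectors at each point, hence they coincide.

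The step that genuinely requires care is the bookkeeping of sign and side conventions: left versus right actions, the sign chosen in the momentum map equation, the sign in the definition of $\om_{\KKS}$, and the precise meaning of $\ze^{\g^*}_X$ and of $\Ad^*$ must all be fixed consistently so that the two occurrences of ``$\pm\langle J(m),[X,Y]\rangle$'' above are literally equal (or differ by one global sign absorbed into the convention for $\om_{\KKS}$). A secondary point, relevant only because $G$ and $M$ are infinite-dimensional here, is whether the equivariant bijection $J\colon M\to\O$ is genuinely an isomorphism of Fr\'echet homogeneous spaces and not merely a set-theoretic bijection; but for the conclusion as stated --- $J$ one-to-one onto a coadjoint orbit, with $J^*\om_{\KKS}=\Om$ --- the pointwise computation together with injectivity is exactly what is needed, so no further regularity discussion is required.
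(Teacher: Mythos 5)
Your argument is correct: it is the standard proof of this standard fact, and the sign issues you flag are indeed the only delicate point (they cancel once one fixes consistent conventions for the coadjoint action, the momentum map equation $i_{\zeta_X}\Om=d\langle J,X\rangle$, and $\om_{\KKS}$). Note that the paper itself offers no proof of this proposition --- it is quoted as a well-known fact with a reference to the appendix of the cited work of Haller--Vizman --- so there is nothing to compare against; your derivation (image equals the orbit by transitivity plus equivariance, injectivity gives the bijection, and the identity $\Om_m(\zeta_X,\zeta_Y)=\langle J(m),[X,Y]\rangle=(J^*\om_{\KKS})_m(\zeta_X,\zeta_Y)$ on the spanning set of infinitesimal generators supplied by infinitesimal transitivity) is exactly the argument the reference supplies.
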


It is easy to see that the $\Diff_{\vol}(\RR^3)$ equivariant momentum map \eqref{mome}
is injective. Indeed, if $J(\Si_1, \beta _{1})= J(\Si_2, \beta _{2})$, then necessarily the surfaces coincide, $\Si_1=\Si_2$, because the 1-forms $ \beta _{1}$ and $ \beta _{2}$ have a finite number of zeros. We thus have 
$\be_2\in\Om^1(\Si_1)$ and $ \int_{\Si_1 }  \al\wedge (\beta _{1}- \beta _{2}) =0$, for all $\alpha\in\Om^1(\RR^3)$, hence $\beta _{1}= \beta _{2}$.

Next we  show that the action of the identity component of $\Diff_{\vol}(\RR^3)$
on connected  components of $\Gr_a^{S,\be}$ is transitive and
infinitesimally transitive.

\begin{lemma}\label{bastion} Let $N$ be an orientable codimension one submanifold of the $m$-dimensional manifold $M$ with inclusion $i_{N}:N\to M$.
Given   $ Y \in \mathfrak{X}  (N)$, there exists $ \si \in \Omega ^{m-2}(M)$ such that $i_{N} ^\ast \si =0$ and $ X_ \si |_N= Y$.
\end{lemma}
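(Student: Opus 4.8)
The plan is to construct $\sigma$ locally near $N$ using a tubular neighborhood and then glue. Since $N$ is an orientable codimension one submanifold of $M$, it admits a tubular neighborhood $U \cong N \times (-\epsilon, \epsilon)$ with coordinate $t$ along the second factor, so that $N = \{t = 0\}$. On this neighborhood, the vector field $\partial_t$ is transverse to $N$, and we may extend $Y \in \X(N)$ to a vector field $\tilde Y$ on $U$ by pulling back along the projection $U \to N$ (so $\tilde Y$ is tangent to the slices $N \times \{\text{pt}\}$ and equals $Y$ on $N$). Choose a volume form $\mu_U$ on $U$ of the form $dt \wedge \mu_N$, where $\mu_N$ is a volume form on $N$ (using orientability of both $N$ and, locally, $M$).

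The key computation is then to produce $\sigma$ with $i_N^*\sigma = 0$ and $\ii_{X_\sigma}\mu_U = $ the form dual to $\tilde Y$ in the sense $X_\sigma = \tilde Y$ near $N$. I would set $\sigma := t\, \ii_{\tilde Y}\mu_U$. Then $i_N^*\sigma = 0$ automatically because of the factor $t$ (which vanishes on $N$). Computing $d\sigma = dt \wedge \ii_{\tilde Y}\mu_U + t\, d(\ii_{\tilde Y}\mu_U)$. On $N$ (i.e.\ at $t=0$) the second term drops out, and $d\sigma|_N = dt \wedge \ii_{\tilde Y}\mu_U$. One checks that $\ii_{\tilde Y}(dt \wedge \mu_N) = (\ii_{\tilde Y}dt)\,\mu_N - dt\wedge \ii_{\tilde Y}\mu_N = -dt \wedge \ii_{\tilde Y}\mu_N$ since $\tilde Y$ is tangent to the slices and hence $\ii_{\tilde Y}dt = 0$; so $dt \wedge \ii_{\tilde Y}\mu_U = -dt\wedge dt \wedge \ii_{\tilde Y}\mu_N$... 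I should instead arrange things so that $\ii_{X_\sigma}\mu_U|_N$ recovers $\tilde Y|_N = Y$. The cleaner route: define $X_\sigma$ by $\ii_{X_\sigma}\mu_U = d\sigma$ and verify directly that $d\sigma|_N = \ii_{\tilde Y}\mu_U|_N$, which amounts to the identity $dt \wedge \ii_{\tilde Y}\mu_U = \ii_{\tilde Y}\mu_U$ restricted along $N$ after noting $\mu_U = dt \wedge \mu_N$ and $\ii_{\tilde Y}dt = 0$: indeed $\ii_{\tilde Y}\mu_U = -dt\wedge \ii_{\tilde Y}\mu_N$, and $dt \wedge \ii_{\tilde Y}\mu_U = -dt \wedge dt \wedge\ii_{\tilde Y}\mu_N = 0$, which is wrong. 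So the factor should not be $t$ alone; rather I want $\sigma$ such that $dt$ appears with coefficient producing $\ii_{\tilde Y}\mu_N$-type terms. The correct ansatz is $\sigma := t\,\pi^*(\ii_Y \mu_N)$ where $\pi : U \to N$; then $i_N^*\sigma = 0$, and $d\sigma = dt \wedge \pi^*(\ii_Y\mu_N) + t\, \pi^*d(\ii_Y\mu_N)$, so $d\sigma|_N = dt\wedge \pi^*(\ii_Y\mu_N) = \ii_{\tilde Y}(dt\wedge \mu_N) \cdot(-1)^{?}$; comparing with $\ii_{\tilde Y}\mu_U = \ii_{\tilde Y}(dt\wedge\mu_N) = -dt\wedge\ii_{\tilde Y}\mu_N = -dt\wedge\pi^*(\ii_Y\mu_N)$ on $N$, we get $X_\sigma|_N = -Y$, so replace $\sigma$ by $-t\,\pi^*(\ii_Y\mu_N)$.

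Thus locally $\sigma_0 := -t\,\pi^*(\ii_Y\mu_N) \in \Omega^{m-2}(U)$ does the job: $i_N^*\sigma_0 = 0$ and $X_{\sigma_0}|_N = Y$. To finish, I would multiply by a bump function $\chi$ equal to $1$ on a smaller neighborhood of $N$ and supported in $U$, and set $\sigma := \chi\,\sigma_0$, extended by zero; then $\sigma \in \Omega^{m-2}(M)$, $i_N^*\sigma = 0$ still holds, and near $N$ we have $\sigma = \sigma_0$ so $X_\sigma|_N = Y$. The divergence-free condition on $X_\sigma$ (i.e.\ that it is of the form $X_\sigma$ with $\ii_{X_\sigma}\mu = d\sigma$) holds by construction since $\sigma$ is globally defined on $M$. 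The main obstacle is purely bookkeeping: getting the sign and the interior-product identities right for $\mu_U = dt\wedge\mu_N$, and making sure the normalization of the tubular-neighborhood volume form is consistent with whatever volume form $M$ carries near $N$ — but since only the value of $X_\sigma$ along $N$ is prescribed, any choice of local volume form compatible with orientations suffices, and no global volume form on $M$ is needed.
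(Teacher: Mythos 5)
Your construction follows essentially the same route as the paper's proof --- a tubular neighborhood $U\cong N\times(-\epsilon,\epsilon)$, an ansatz of the form $\sigma=\pm\, t\cdot(\text{contraction of a slice top-form with }Y)$ so that the factor $t$ forces $i_N^*\sigma=0$ while $d\sigma|_{t=0}$ produces the desired $(m-1)$-form, and a cut-off to globalize --- and your sign bookkeeping for $d\bigl(t\,\pi^*(i_Y\mu_N)\bigr)$ versus $i_{\tilde Y}(dt\wedge\mu_N)$ comes out right. However, there is one genuine error: the closing claim that ``any choice of local volume form compatible with orientations suffices.'' The vector field $X_\sigma$ is defined by $i_{X_\sigma}\mu=d\sigma$ relative to a \emph{fixed} volume form $\mu$ on $M$; in the application (Lemma \ref{toto}) this is the Euclidean volume form on $\mathbb{R}^3$, and $X_\sigma$ must be divergence free for that $\mu$. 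If you instead define $X_\sigma$ using your product form $\mu_U=dt\wedge\pi^*\mu_N$, and $\mu=f\,\mu_U$ near $N$ with $f>0$, then the two candidate vector fields differ pointwise by the factor $f$: the field $X$ with $i_X\mu=d\sigma$ restricts on $N$ to $Y/f$, not to $Y$. For an arbitrary $\mu_N$ there is no reason to have $f\equiv 1$ along $N$, so your $\sigma$ fails the requirement $X_\sigma|_N=Y$ for the volume form that actually matters, and the transitivity argument it feeds into would break.

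The fix is one line, and it is exactly what the paper does: do not choose $\mu_N$ freely, but write the given volume form on the tube as $\mu=dt\wedge\omega$ with $\omega=i_{\partial_t}\mu$ a $t$-dependent $(m-1)$-form on the slices, and use $\sigma=\pm\, t\, i_{\tilde Y}\omega$; equivalently, take $\mu_N:=i_N^*(i_{\partial_t}\mu)$ in your ansatz, so that $dt\wedge\pi^*\mu_N$ agrees with $\mu$ at the points of $N$. Since $d\sigma|_{t=0}$, and hence $X_\sigma|_N$, depends only on the value of $\mu$ along $N$, this normalization is all that is needed --- but it is needed, and your argument as written omits it.
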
 

\begin{proof} We endow $M$ with a Riemannian metric $g$
and we denote by $\mu$ the induced volume form. Since $N$ is orientable, $TN^{\perp}$ is a trivial line bundle, the trivialization being given by
the unit normal vector field $n$ along $N$, namely
 $TN^{\perp} \rightarrow N \times \mathbb{R}  $, $u_x\mapsto (x, u _x \cdot n)$. Thus, we can identify a tubular neighborhood $V$ of $N$ in $M$ with $N \times\, (-1,1)$.
With this identification, the volume form $ \mu $ on $V$ reads $dt \wedge \omega$, where $\om$ is a $t$-dependent $(m-1)$-form on $N$.

We define $\widetilde{Y} \in \mathfrak{X}  (V)$ by $\widetilde{Y}(x,t):=Y(x)+0 \partial _t $ on $V$.
We consider the form $ t  i  _{\widetilde{Y}}\omega \in \Omega ^{m-2}(V)$ and extend it to $ \si \in \Omega ^{m-2}(M)$. Let us check that $ \si $ satisfies the two required conditions.
On one hand, we have $i_{N} ^\ast \si = i_{N} ^\ast \left( i  _{tY}\omega  \right) =0$, since $N\subseteq V$ is characterized by $t=0$. 
On the other hand, on $V$  we have
$  i  _{X_ \si - \widetilde{Y}} \mu=  d  \si -   i  _{\widetilde{Y}} \mu =  d  \left( t  i  _{\widetilde{Y}} \omega \right) -  i  _{\widetilde{Y}}(dt \wedge \omega )=t  d   i  _{\widetilde{Y}} \omega $. We get $(X_\si-\widetilde Y)|_N=0$,
so $X_ \si |_N=Y$.
\end{proof}

\medskip

\begin{lemma}\label{toto}
The Lie algebra $\X_{\vol}(\RR^3)$ of compactly supported 
divergence free vector fields acts transitively  on the manifold $\Emb_a(S,\RR^3)$.
\end{lemma}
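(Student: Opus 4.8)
The plan is to establish transitivity of the $\X_{\vol}(\RR^3)$-action on $\Emb_a(S,\RR^3)$ in two stages: first show that the orbit of any $f$ is open, then show it is closed (equivalently, that orbits exhaust each connected component). Since $\Emb_a(S,\RR^3)$ is the $a$-level set inside $\Emb(S,\RR^3)$ of the volume functional, and since the plain $\X(\RR^3)$-action on $\Emb(S,\RR^3)$ is already (infinitesimally) transitive by \cite{H76}, the real content is that restricting to \emph{divergence-free} vector fields does not lose transitivity, once we restrict attention to the constant-volume slice.

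\smallskip

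\noindent\textit{Step 1: the tangent space is spanned by the infinitesimal generators.}
Fix $f\in\Emb_a(S,\RR^3)$ and set $\Si=f(S)$. By \eqref{emba}, $T_f\Emb_a(S,\RR^3)=\{u\o f : u\in\X(\RR^3),\ \int_S f^* i_u\mu=0\}$. I must show every such $u\o f$ equals $\ze_w(f)=w\o f$ for some $w\in\X_{\vol}(\RR^3)$. Decompose $u|_\Si=u|_\Si^\top+u|_\Si^\perp$ along $\Si$. The normal part is $(\rh\, n)$ with $\rh\in C^\oo(\Si)$ satisfying $\int_\Si \rh\,\mu_\Si=0$ by the tangency constraint (as in the computation after \eqref{tngr}). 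Because $\rh$ has integral zero against $\mu_\Si$, there is a vector field $Z$ on $\Si$ with $\pounds_Z \mu_\Si = \rh\,\mu_\Si$, i.e.\ $d\, i_Z\mu_\Si=\rh\mu_\Si$; applying Lemma \ref{bastion} to $N=\Si$, $M=\RR^3$, $Y=Z$ produces $\si_1\in\Om^1(\RR^3)$ with $i_\Si^*\si_1=0$ and $X_{\si_1}|_\Si=Z$, and a short computation shows the \emph{normal} component of $X_{\si_1}$ along $\Si$ is exactly $\rh\,n$ — this is the same mechanism as in the last line of the proof of Lemma \ref{bastion}, where $i_{X_\si}\mu$ is computed from $d\si$. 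Thus $X_{\si_1}$ is a compactly supported (after cutting off) divergence-free field realizing the normal part of $u$. For the tangential part $u|_\Si^\top=:Y\in\X(\Si)$, apply Lemma \ref{bastion} directly to get $\si_2\in\Om^1(\RR^3)$ with $i_\Si^*\si_2=0$ and $X_{\si_2}|_\Si=Y$; then $X_{\si_2}$ is divergence-free, compactly supported, and restricts to $Y$ on $\Si$. Finally $w:=X_{\si_1}+X_{\si_2}$ lies in $\X_{\vol}(\RR^3)$ and satisfies $w|_\Si=u|_\Si$, so $w\o f=u\o f$. This proves the orbit map $\X_{\vol}(\RR^3)\to T_f\Emb_a(S,\RR^3)$ is onto, i.e.\ infinitesimal transitivity; the orbits are therefore open.

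\smallskip

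\noindent\textit{Step 2: from infinitesimal to global transitivity within a component.}
Open orbits of a group action partition the space, and their complements (unions of other open orbits) are open as well, so each orbit is also closed and hence is a union of connected components. To finish I need that $\Emb_a(S,\RR^3)$ is connected — or, more precisely, that it is exactly the statement being used later that \emph{connected components} of $\Gr_a^{S,\be}$ are coadjoint orbits, so I only need transitivity on each connected component of $\Emb_a(S,\RR^3)$, which is immediate from Step 1 by the standard connectedness argument (any two points in the same component are joined by a path, covered by finitely many orbit-neighborhoods, and the time-dependent flow of the corresponding compactly supported divergence-free fields moves one to the other; compact support guarantees the flows exist for all time and lie in $\Diff_{\vol}(\RR^3)$).

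\smallskip

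\noindent\textbf{Main obstacle.} The delicate point is Step 1: producing a \emph{divergence-free} ambient field whose restriction to $\Si$ has a prescribed normal component $\rh\,n$ with $\int_\Si\rh\,\mu_\Si=0$. Lemma \ref{bastion} as stated only controls the full restriction $X_\si|_N=Y$ for a tangential $Y$; to hit a normal profile I must instead feed in a surface vector field $Z$ whose $\mu_\Si$-divergence equals $\rh$ (solvable precisely because $\int_\Si\rh\,\mu_\Si=0$) and then verify, via the identity $i_{X_\si}\mu=d\si$, that the normal part of the resulting $X_\si$ along $\Si$ is the intended $\rh\,n$ rather than something else. Getting this bookkeeping of normal-versus-tangential components right — and keeping everything compactly supported — is where the care is needed; the rest is standard Lie-theoretic machinery.
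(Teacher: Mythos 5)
Your overall architecture (realize the restriction $u|_\Si$ by a compactly supported divergence--free field, then pass from infinitesimal to local to componentwise transitivity) matches the paper's, and your treatment of the tangential part and of Step 2 is fine. But Step 1 contains a genuine error in the mechanism you propose for the normal part. You feed a surface field $Z$ with $\operatorname{div}Z=\rh$ into Lemma \ref{bastion} and claim the resulting $X_{\si_1}$ has normal component $\rh\,n$ along $\Si$. This contradicts the conclusion of that lemma: it produces $\si_1$ with $i_\Si^*\si_1=0$ and $X_{\si_1}|_\Si=Z\in\X(\Si)$, and indeed for \emph{any} $\si$ with $i_\Si^*\si=0$ one has $i_\Si^*\,i_{X_\si}\mu=i_\Si^*d\si=d\,i_\Si^*\si=0$, while the normal component $g$ of a field $v$ along $\Si$ is read off from $i_\Si^*\,i_v\mu=g\,\mu_\Si$. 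So the vanishing of $i_\Si^*\si$ \emph{forces} the normal component of $X_\si$ along $\Si$ to vanish — Lemma \ref{bastion} is structurally incapable of producing a prescribed nonzero normal profile, and the "short computation" you defer to cannot come out as claimed.

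The correct way to hit the normal part — and what the paper does — is dual to your attempt: since $\int_\Si i_\Si^*\,i_u\mu=0$ and $H^2(\Si)\simeq\RR$ via integration, write $i_\Si^*\,i_u\mu=d\al_\Si$ for some $\al_\Si\in\Om^1(\Si)$, extend $\al_\Si$ to a compactly supported $\al\in\Om^1_c(\RR^3)$ with $i_\Si^*\al=\al_\Si$, and take $X_\al$ (divergence--free by construction). Then $i_\Si^*\,i_{u-X_\al}\mu=d\al_\Si-i_\Si^*d\al=0$, so $u-X_\al$ is tangent to $\Si$ along $\Si$; only \emph{now} does Lemma \ref{bastion} apply, to the tangential remainder $Y=(u-X_\al)|_\Si$, yielding $\si$ with $X_\si|_\Si=Y$, and $w=X_{\al+\si}$ does the job. (Note this handles $u|_\Si$ in one pass: $X_\al$ carries the correct normal component plus an uncontrolled tangential part, which is absorbed into $Y$.) Your "main obstacle" paragraph correctly identifies where the difficulty lies, but the resolution you sketch there is the one that fails; you need the extension-of-a-primitive step, not a second application of Lemma \ref{bastion}.
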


\begin{proof}
We consider $u\o f\in T_f \operatorname{Emb}_a(S,\RR^3)$ with $u \in \mathfrak{X}_c  (\RR^3)$, hence $\int_\Si i  _u \mu =0$, where $\Si=f(S)$. Therefore, there exists $ \al_\Si \in\Om^1(\Si)$ with $i_\Si ^\ast   i  _u \mu =  d \al_\Si $. 
We choose $ \alpha \in \Omega_c ^{1}(\RR^3)$ 
such that $ i_\Si^\ast \alpha = \alpha _\Si$. We have $i^*_\Si (  i  _{u- X_ \alpha } \mu )=  d  \alpha _\Si- i_\Si ^\ast  d  \alpha =0$, hence $Y:=(u- X_ \alpha)|_\Si \in \mathfrak{X}  (\Si)$.
By the Lemma \ref{bastion} one can find 
$ \si \in \Omega ^{1}(\RR^3)$ with $i_\Si ^\ast \si=0$ and such that $X_ \si |_\Si =Y$. Because $\Si$ is compact, $\si$ can be chosen with compact support.
Hence the vector field $X_{ \alpha + \si }\in\X_{\vol}(\RR^3)$
has the property $u|_\Si=X_{ \alpha + \si }|_\Si$, so $u\o f=X_{ \alpha + \si }\o f$, which shows the infinitesimal transitivity of the $\mathfrak{X}_{\vol}(\RR^3)$ action  on $\operatorname{Emb}_a(S,\RR^3)$.
\end{proof}

\medskip 

It is clear that the construction in the proof of Lemma \ref{toto} 
can be done smoothly depending on a parameter.
Since $\Diff_{\vol}(\RR^3)$ is locally connected by smooth arcs,
we conclude that the action is locally transitive.

\begin{lemma}
The identity component of $\Diff_{\vol}(\RR^3)$ acts transitively on 
connected  components of the manifold of vortex sheets $\Gr_a^{S,\be}$.
\end{lemma}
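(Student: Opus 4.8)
The plan is to promote the infinitesimal transitivity established in Lemma \ref{toto} to a global transitivity statement on the base of the principal bundle \eqref{pib2}, using connectedness. First I would observe that by Lemma \ref{toto} and the remark following it, the $\Diff_{\vol}(\RR^3)$-action on $\Emb_a(S,\RR^3)$ is locally transitive: the construction producing $X_{\al+\si}$ from $u\o f$ depends smoothly on $f$, so every $f$ has a neighborhood all of whose points are reached from $f$ by time-one flows of compactly supported divergence free vector fields. Since $\Diff_{\vol}(\RR^3)$ is by definition the identity component, it is connected, and a standard argument (the orbit is open, hence closed as the complement of a union of open orbits, hence all of the connected component) shows that the $\Diff_{\vol}(\RR^3)$-orbits in $\Emb_a(S,\RR^3)$ are exactly its connected components.

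Next I would push this down along the principal bundle projection $\pi^\be:\Emb_a(S,\RR^3)\to\Gr_a^{S,\be}$ of \eqref{pib2}. The projection is continuous, open, and surjective, and it is $\Diff_{\vol}(\RR^3)$-equivariant by construction (recall \eqref{act} and that $\pi^\be$ intertwines $\ph\cdot f = \ph\o f$ with $\ph\cdot(\Si,\be_\Si)=(\ph(\Si),\ph_*\be_\Si)$). Hence $\pi^\be$ maps each connected component of $\Emb_a(S,\RR^3)$ onto a connected, open, $\Diff_{\vol}(\RR^3)$-invariant subset of $\Gr_a^{S,\be}$, and these subsets cover $\Gr_a^{S,\be}$; by the same open/closed argument they are unions of connected components of $\Gr_a^{S,\be}$. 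Therefore the $\Diff_{\vol}(\RR^3)$-orbit of any $(\Si,\be_\Si)$ contains the image of a neighborhood of any $f$ with $\pi^\be(f)=(\Si,\be_\Si)$, so it is open; being also invariant and the base being locally connected, each orbit is open and closed in $\Gr_a^{S,\be}$, hence a union of connected components, and in fact a single connected component since the orbit is connected as a continuous image of the connected group $\Diff_{\vol}(\RR^3)$.

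The one point requiring a little care — and the main potential obstacle — is the passage from infinitesimal transitivity to local transitivity in the Fréchet (ILH) setting: one must know that a smoothly parametrized family of ``infinitesimal directions'' $u_f\o f$ can be integrated to an actual local section of the orbit map, i.e.\ that the exponential-type map $v\mapsto \Fl^{X_{\al+\si}}_1\o f$ is locally surjective onto a neighborhood of $f$. This is exactly the kind of statement proved by the arc-connectedness / implicit-function machinery for diffeomorphism groups (as invoked after Lemma \ref{toto}, ``$\Diff_{\vol}(\RR^3)$ is locally connected by smooth arcs''), so I would simply cite that fact rather than reprove it. Once local transitivity on $\Emb_a(S,\RR^3)$ is granted, the descent to $\Gr_a^{S,\be}$ and the final conclusion are purely topological: connected image of a connected group acting with open orbits on a locally connected space gives orbits $=$ connected components.

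\begin{proof}
By Lemma \ref{toto} the Lie algebra $\X_{\vol}(\RR^3)$ acts infinitesimally transitively on $\Emb_a(S,\RR^3)$, and the construction of the integrating vector field $X_{\al+\si}$ in that proof can be performed smoothly in the base point $f$. Since $\Diff_{\vol}(\RR^3)$ is locally connected by smooth arcs, it follows that the $\Diff_{\vol}(\RR^3)$-action on $\Emb_a(S,\RR^3)$ is locally transitive: every $f$ has a neighborhood contained in its orbit. Hence each orbit is open; as the complement of an orbit is a union of (open) orbits, each orbit is also closed, so the orbits are exactly the connected components of $\Emb_a(S,\RR^3)$ (using that $\Diff_{\vol}(\RR^3)$ is connected).

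The projection $\pi^\be:\Emb_a(S,\RR^3)\to\Gr_a^{S,\be}$ of \eqref{pib2} is a continuous, open, surjective map which is $\Diff_{\vol}(\RR^3)$-equivariant for the actions $\ph\cdot f=\ph\o f$ and \eqref{act}. Therefore the image under $\pi^\be$ of a $\Diff_{\vol}(\RR^3)$-orbit in $\Emb_a(S,\RR^3)$ is a $\Diff_{\vol}(\RR^3)$-orbit in $\Gr_a^{S,\be}$, and it is open (as the image of an open set under the open map $\pi^\be$) and connected (as the continuous image of the connected group $\Diff_{\vol}(\RR^3)$). As above, openness of all orbits forces each orbit to be closed as well, so each $\Diff_{\vol}(\RR^3)$-orbit in $\Gr_a^{S,\be}$ is a connected, open and closed subset, hence a connected component. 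Thus the identity component $\Diff_{\vol}(\RR^3)$ acts transitively on each connected component of $\Gr_a^{S,\be}$.
\end{proof}
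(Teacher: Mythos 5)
Your argument is correct and follows the same route the paper takes (the paper in fact leaves this lemma without an explicit proof, relying on the remark that the construction of Lemma \ref{toto} depends smoothly on parameters and that $\Diff_{\vol}(\RR^3)$ is locally connected by smooth arcs, hence the action is locally transitive). You simply make explicit the standard open--closed orbit argument and the descent along the equivariant surjection $\pi^\be$, which is exactly what is implicitly intended.
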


Now Proposition \ref{spcase} can be applied to the momentum map for vortex sheets.

\begin{theorem}\label{Pere_Tranquille} 
The restriction of the momentum map 
$J: \operatorname{Gr} _a^{S, \beta}\rightarrow\mathfrak{X} _{\vol}(\RR^3)^\ast$
in \eqref{mome} to any connected   component of the manifold of vortex sheets $\Gr_a^{S,\be}$ is one-to-one onto a coadjoint orbit of the identity component of  $\Diff_{\vol}(\RR^3)$, the Lie group of compactly supported volume preserving diffeomorphisms. 
The Kostant--Kirillov--Souriau symplectic form $\om_{\rm KKS}$ on the coadjoint orbit satisfies $J^*\om_{\rm KKS}=\Om$, for the symplectic form $\Om$ in \eqref{pibeom}.
\end{theorem}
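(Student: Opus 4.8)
The plan is to verify that the three hypotheses of Proposition~\ref{spcase} are met by the momentum map $J$ restricted to a connected component $\mathcal{C}$ of $\Gr_a^{S,\be}$, under the action of the identity component $G$ of $\Diff_{\vol}(\RR^3)$. First I would note that transitivity of $G$ on $\mathcal{C}$ is exactly the content of the Lemma immediately preceding the theorem (which in turn rests on Lemma~\ref{toto} plus the remark that the construction can be carried out smoothly in a parameter, so that local transitivity of the Lie algebra action lifts to transitivity of the connected group on each connected component). Infinitesimal transitivity of the $\X_{\vol}(\RR^3)$ action on $\Gr_a^{S,\be}$ likewise follows from Lemma~\ref{toto}: given $(\rh,d\la)\in T_{(\Si,\be_\Si)}\Gr_a^{S,\be}$ written in the splitting \eqref{split_a}, one lifts it to a tangent vector $u\o f\in T_f\Emb_a(S,\RR^3)$ via the principal bundle \eqref{pib2} and then uses the transitivity of $\X_{\vol}(\RR^3)$ on $\Emb_a(S,\RR^3)$ and formula \eqref{T_pi_beta} to realise $(\rh,d\la)$ as $\ze_u^{\Gr}$ for some $u\in\X_{\vol}(\RR^3)$; here I would simply cite that the horizontal piece $\rh n$ is produced by Lemma~\ref{bastion}-type arguments and the vertical piece $d\la$ is produced by a tangential vector field on $\Si$, both extendable to compactly supported divergence free fields on $\RR^3$.

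Next I would invoke the injectivity of $J$, which is established in the paragraph following Proposition~\ref{spcase}: equality $J(\Si_1,\be_1)=J(\Si_2,\be_2)$ forces $\Si_1=\Si_2$ (because the vorticity 1-forms have only finitely many zeros, so their supports—essentially all of $\Si$—must coincide), and then the nondegeneracy of the pairing $\al\mapsto\int_{\Si_1}\al\wedge(\be_1-\be_2)$ over $\al\in\Om^1(\RR^3)$ forces $\be_1=\be_2$. The equivariance of $J$ for the coadjoint action is the content of the Lemma on the Hamiltonian action, using $\Ad_\ph X_\al=X_{\ph_*\al}$. With transitivity, infinitesimal transitivity, injectivity and equivariance all in hand, Proposition~\ref{spcase} applies verbatim and yields that $J|_{\mathcal{C}}$ is a bijection onto a coadjoint orbit of $G$ and that $J^*\om_{\KKS}=\Om$, which is precisely the assertion of Theorem~\ref{Pere_Tranquille}.

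The one point that requires genuine care—and which I expect to be the main obstacle—is the passage from \emph{local} transitivity of the group action (the honest output of the smooth-parameter version of Lemma~\ref{toto}, combined with local connectedness of $\Diff_{\vol}(\RR^3)$ by smooth arcs) to \emph{global} transitivity of the identity component on a full connected component of $\Gr_a^{S,\be}$. This is a standard but nontrivial Frobenius/orbit-theorem style argument in the Fréchet category: local transitivity implies that $G$-orbits are open in $\mathcal{C}$, hence (being complements of unions of other open orbits) also closed, hence equal to the connected components. I would state this explicitly, cite the relevant infinite-dimensional version (it is exactly the reasoning already alluded to in the excerpt just before the last Lemma, and is the kind of statement whose appendix reference is given for Proposition~\ref{spcase}), and otherwise treat the remaining verifications as routine bookkeeping with the hat calculus and the formulas \eqref{tinta}, \eqref{T_pi_beta}, \eqref{mome} already assembled above.
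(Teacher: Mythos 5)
Your proposal is correct and follows essentially the same route as the paper: the theorem there is obtained precisely by assembling Lemma \ref{toto} (infinitesimal transitivity, pushed down through $\pi^\beta$), the local-to-global transitivity remark, the injectivity argument following Proposition \ref{spcase}, and the equivariance from the momentum map lemma, and then invoking Proposition \ref{spcase}. Your explicit open-orbits-are-also-closed argument for global transitivity on connected components is exactly the standard reasoning the paper leaves implicit.
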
 


\paragraph{Vector field approach to vorticity density.}
In \cite{goldin2}, open sheets (ribbons/tubes) in 3D are treated and a vector field $\ga_\Si$ tangent to the vortex lines of $\Si$ is used to describe the vorticity density (also called vortex sheet strength).

In our description of vortex sheets using closed 1-forms $\beta _ \Sigma \in \Omega ^1 ( \Sigma )$ for the vorticity density,  the vector field $ \gamma _ \Sigma $ on $ \Sigma $ is defined by $ i _{\ga_\Si}\mu_\Si=\be_\Si$ with $\mu_\Si= i _n\mu$ the volume form on the oriented surface $ \Sigma $ induced by the Euclidean metric. Note that $ \gamma _ \Sigma $ is a divergence free vector field with respect to $ \mu _ \Sigma $
and $\ga_\Si=n\x \be_\Si^\sharp$.

In terms of the vorticity density vector field $ \gamma _ \Sigma $, the momentum map \eqref{mome} becomes
\[
\langle J(\Si,\ga_\Si),X_\al\rangle=\int_\Si\al\wedge i _{\ga_\Si}\mu_\Si
=\int_\Si\al(\ga_\Si)\mu_\Si,\quad \al\in\Om_c^1(\RR^3).
\]
Let $\chi:=\al^\sharp\in\X_c(\RR^3)$ be the vector potential of $u=X_\al\in\X_{\vol}(\RR^3)$, obtained from the potential $\al\in\Om^1(\RR^3)$
by rising indices with the Euclidean metric, i.e, $ u = \operatorname{curl}\chi$.
Using $\chi$, the momentum map is given, as in \cite{goldin2}, by
\begin{equation}\label{gold}
\langle J(\Si,\ga_\Si),u\rangle=\int_\Si (\ga_\Si\cdot\chi)\mu_\Si.
\end{equation}
Note that the description of vorticity density with a closed 1-form $ \beta _ \Sigma $ is more natural since, unlike the divergence free vector field $ \gamma _ \Sigma $,  the 1-form is naturally transported via push-forward by diffeomorphisms in the coadjoint action, and hence is associated to a given closed 1-form $ \beta $ on $S$, see \eqref{pib1}.


\section{Vortex sheets dynamics in 3D fluids}\label{Sec_3}

{In this section we characterize the vortex lines on vortex sheets $(\Si,\be_\Si)$, having discrete period group $\ell\ZZ$ and non-vanishing vorticity density, as the fibers of a certain fibration  $ \Sigma \rightarrow \TT_\ell$.} 
We then determine the Hamilton equations governing the motion of such vortex sheets. Denoting by $kB$ the binormal curvature to the vortex lines, these equations read
\begin{equation}\label{Ham_PDE} 
\left\{
\begin{array}{l}
\vspace{0.2cm}\dot \Sigma = k B^{\perp_{\Sigma}}\\
\dot \beta _ \Sigma = d  ( \beta _ \Sigma ( kB ^{\top_ \Sigma })).
\end{array}\right.
\end{equation} 
In particular, the deformation of the support of the vortex sheet is governed by the normal component of the binormal curvature of the vortex lines, while the deformation of the vorticity density is governed by the variation of the volume 
$(\gamma _ \Sigma \times kB) \cdot n$
generated by the vorticity density vector field, the binormal curvature, and the normal to the support. Finally, we focus  on the invariant
submanifold of circle-invariant vortex sheets, we show that there are no stationary solutions of this
type, and we determine the form of the Hamiltonian PDE \eqref{Ham_PDE} on the connected component of
circle-invariant vortex sheets fibered by parallel vortex lines.

\subsection{Vortex lines and their curvatures on vortex sheets}

\paragraph{Vortex lines as fibration over a circle.} Recall that the period group of a closed 1-form $\beta \in \Omega ^1 (S)$ is the subgroup of $ \mathbb{R}  $ defined by
\[
\operatorname{Per}(\beta) := \left\{ \int_ \gamma \beta : [\gamma ] \in H_1(S,\ZZ)\right\}.
\]

\begin{lemma}\label{lemma_31} 
If the period group of $\be$ is discrete, $\operatorname{Per}( \beta)= \ell \mathbb{Z}$ for some $\ell>0$, then there is a map
\begin{equation}\label{map_b} 
b: S \rightarrow \mathbb{R}/\ell\mathbb{Z}=:\TT_\ell ,
\end{equation} 
such that $ \beta = b ^\ast\vartheta_\ell $, where $ \vartheta_\ell \in \Omega ^1 (\TT_\ell)$ denotes the {angular form of $\mathbb{T}_\ell$, \ie., 
$ p _\ell ^* \vartheta_\ell = dx$, where $p_\ell: \mathbb{R} \rightarrow \TT_\ell$ is the projection $ p _\ell(x)= x +\ell\mathbb{Z}$}.
\end{lemma}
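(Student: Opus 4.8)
The plan is to construct the map $b$ by integrating $\beta$ along paths and showing the result is well-defined modulo $\ell\mathbb{Z}$. First I would fix a base point $s_0 \in S$ (working on a connected component; the general case follows componentwise, or one assumes $S$ connected as is implicit). For any $s \in S$, choose a smooth path $\gamma$ from $s_0$ to $s$ and set $b(s) := \int_\gamma \beta \pmod{\ell\mathbb{Z}}$. The key point is independence of the choice of $\gamma$: if $\gamma'$ is another such path, then $\gamma * \bar{\gamma}'$ is a loop, so $\int_\gamma \beta - \int_{\gamma'} \beta = \int_{\gamma * \bar{\gamma}'} \beta \in \operatorname{Per}(\beta) = \ell\mathbb{Z}$ since $\beta$ is closed (hence the integral depends only on the homology class of the loop). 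Thus $b: S \to \mathbb{T}_\ell$ is well-defined as a set map.

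Next I would verify that $b$ is smooth and that $\beta = b^*\vartheta_\ell$. Smoothness is local: near any $s \in S$, on a simply connected neighborhood $U$, the closed form $\beta$ has a primitive $g \in C^\infty(U)$ with $dg = \beta|_U$, and $b|_U$ agrees with $p_\ell \circ (g + c)$ for a suitable constant $c$ (matching the path-integral definition), which is manifestly smooth. For the pullback identity, note $p_\ell^*\vartheta_\ell = dx$ by definition of $\vartheta_\ell$, so locally $b^*\vartheta_\ell = (p_\ell \circ (g+c))^*\vartheta_\ell = d(g+c) = dg = \beta$; since this holds on a neighborhood of every point, $b^*\vartheta_\ell = \beta$ globally.

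The main obstacle is really just the well-definedness modulo $\ell\mathbb{Z}$, which hinges on two facts already available: $\beta$ is closed (so $\int_\gamma \beta$ over a loop depends only on $[\gamma] \in H_1(S,\mathbb{Z})$, via de Rham / Stokes applied to a bounding chain when the class is trivial, and additivity over homology classes in general), and the hypothesis $\operatorname{Per}(\beta) = \ell\mathbb{Z}$ precisely says these periods all lie in $\ell\mathbb{Z}$. No hypothesis on the zero set of $\beta$ is needed here, only closedness; the isolated-zeros assumption from the setup is not used in this lemma. I would also remark that $b$ is unique up to post-composition with a rotation of $\mathbb{T}_\ell$ (corresponding to the choice of base point and the additive constant), though the lemma as stated only asserts existence.
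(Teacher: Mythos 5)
Your proof is correct and is exactly the standard argument the paper has in mind (the paper omits the proof, treating the existence of a Cartan developing for a closed 1-form with discrete period group as a known fact): integrate $\beta$ from a base point, use $\operatorname{Per}(\beta)=\ell\mathbb{Z}$ for well-definedness modulo $\ell\mathbb{Z}$, and check $b^*\vartheta_\ell=\beta$ via local primitives. Your closing remark that $b$ is unique up to a rotation of $\TT_\ell$ is also consistent with how the paper later uses this lemma.
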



In other words $\be$ is the {\it logarithmic derivative} of $b$, also denoted by $\be=\de b$, and $b$ is called a {\it Cartan developing} of $\be$.

\medskip

From now on we will assume that the period group of the closed 1-form $ \beta $ is $\ell\ZZ$,
and that $\be$ has no zeros. 
This forces the Cartan developing $b:S\to \TT_\ell$ to be a submersion, hence a fibration (by compactness),
and the surface $S$ to be {of genus one}.
Notice that the 1-form $\be$ cannot be exact.
Consequently, every element $(\Si,\be_\Si)$ of the manifold of vortex sheets
$\Gr_a^{S,\be}$ has similar properties: the oriented surface $\Si\subseteq\RR^3$ is {of genus one}, it is endowed with a closed non-exact 1-form $\be_\Si$  with period group $\ell\ZZ$, without zeros, and equal to the logarithmic derivative of a fibration $b_\Si:\Si\to\TT_\ell$.
Its fibers 
$$
C_z:=b_\Si^{-1}(z),\quad z\in\TT_\ell,
$$ 
are closed curves on $\Si$, called the {\it vortex lines} of the vortex sheet $\Si$.
They integrate the kernel of $ \beta _ \Sigma $,
thus they don't depend on the choice of the Cartan developing $b_\Si$ of $\be_\Si$.

\paragraph{Frames, orientations, and curvatures for vortex lines.} The Darboux frame of the vortex line $C_z$ on the surface $\Si$
is the orthonormal frame $\{T,n_g,n\}$ with $T$ a unit tangent vector to $C_z$
(especially $\be_\Si(T)=0$),
$n$ the unit normal vector field to $\Si$, and ${n_g=n\x T}$ the  geodesic normal vector field (tangent to $\Si$). The orthonormal frame $\{T,n_g\}$  is positively oriented on $ \Sigma $:
$\mu_\Si(T,n_g)=1$.

The surface $\Si$ is oriented, so the choice of $n$ is fixed by the compatibility with the orientations of $\Si$ and $\RR^3$. The vortex lines $C_z$ are oriented by using $ \beta _ \Sigma $ and imposing the condition $\be_\Si(n_g)>0$, which provides the orientation given by $T=n_g\x n$. 

The vortex lines are the integral curves of the (nowhere vanishing) vorticity density vector field  $\ga_\Si$ with $ i _{\ga_\Si}\mu_\Si=\be_\Si$.
The vector field $\ga_\Si$ has the same orientation as the unit tangent vector field $T$,
since $\mu_\Si(\ga_\Si,n_g)=\be_\Si(n_g)>0$.
It follows that $ \beta _ \Sigma (n_g) = \| \gamma _ \Sigma \|$
and $\ga_\Si=\be_\Si(n_g)T$.

Let $\{T,N,B\}$ denote the (orthonormal) Frenet frame of the vortex line $C_z\subseteq\RR^3$.
The mean curvature vector field of the curve $C_z\subseteq\RR^3$ is $H_{C_z}= k N$,
where $ k\geq 0 $ denotes the curvature of $C_z$.
Its decomposition in the normal frame $\{n_g,n\}$ to the vortex line $C_z$ is
\begin{equation}\label{decomp_H} 
H_{C_z}= k N=k_gn_g+k_nn,
\end{equation} 
with $k_g$ and $k_n$ denoting
the geodesic curvature and the normal curvature of the fiber $C_z$ on the surface
$\Si$. Note that $N$, $k$, and $k_n$ depend only on the fibers $C_z$ (not on their orientation given by $ \beta _ \Sigma $) and on the orientation of $ \Sigma $. However, the pointing direction of the vectors $T$, $B$, $n_g$, and the sign of $ \kappa _g$ depend on the orientation
of the fibers. Note also that the {binormal curvature reads}
\begin{equation}\label{kB_decomposition}
k B=k_gn-k_nn_g.
\end{equation}

\paragraph{Gluing.} The surface $\Si$ is a disjoint union of the fibers $C_z$ of the fibration $b_\Si$,
thus the curvature functions $k$, $k_g$, and $k_n$ on the fibers 
glue together to functions on the surface. We denote them by the same letters:
$k,k_g,k_n\in C^\oo(\Si)$.
The same gluing can be done with the unit tangent vector field $T$ to the fibers, 
as well as with $n_g=n\x T$, obtaining vector fields $T,n_g\in\X(\Si)$.
It follows that the pullback of the 1-form 
\begin{equation}\label{nusi}
\sigma _\Si:=- i _{n_g}\mu_\Si=- i _{n_g} i _n\mu\in\Om^1(\Si)
\end{equation}
to each fiber,
namely $i_{C_z}^* \sigma _\Si$, is the volume form $\mu_{C_z}$ on the fiber, $z\in\TT_\ell$. The following holds true:
\begin{equation}\label{later}
\sigma _{\Si}\wedge\be_\Si=\be_\Si(n_g)\mu_\Si,
\end{equation} 
since both sides evaluated on the orthonormal frame $\{T,n_g\}$ give $\be_\Si(n_g)$.

\begin{lemma}\label{puls}
The smooth function $k_g\in C^\oo(\Si)$ is the divergence of $-n_g\in\X(\Si)$ (with respect to $\mu_\Si$), thus $\int_\Si k_g\mu_\Si=0$.
\end{lemma}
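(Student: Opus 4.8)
The plan is to interpret $k_g$ as a divergence and then apply Stokes' theorem. The key geometric fact is that the geodesic curvature of the vortex line $C_z$, as a curve on the oriented surface $\Si$, measures exactly the failure of $n_g$ to be parallel-transported along $C_z$ in the intrinsic geometry of $\Si$. Concretely, working intrinsically on the Riemannian surface $\Si$, the Darboux equations give $\nabla^\Si_T T = k_g n_g$ and $\nabla^\Si_T n_g = -k_g T$, where $\nabla^\Si$ is the Levi-Civita connection of the induced metric on $\Si$. Since $T$ and $n_g$ form a positively oriented orthonormal frame on $\Si$ with $\mu_\Si(T,n_g)=1$, one computes the divergence of $n_g$ by $\div(n_g) = g(\nabla^\Si_T n_g, T) + g(\nabla^\Si_{n_g} n_g, n_g)$; the second term vanishes since $n_g$ has unit length, and the first term is $-k_g$. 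Hence $\div(n_g) = -k_g$, i.e.\ $k_g = \div(-n_g)$.

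The only subtlety is that this pointwise identity is asserted on the glued vector field $n_g\in\X(\Si)$ and the glued function $k_g\in C^\oo(\Si)$, so I must make sure the glued objects agree fiberwise with the intrinsic Darboux data of each $C_z$. This is immediate from the construction in the "Gluing" paragraph: $n_g = n\times T$ with $T$ the glued unit tangent to the fibers, and $k_g$ is by definition the geodesic curvature of the fiber through each point, computed with the orientation of $\Si$ fixed once and for all (the paragraph on orientations notes that the sign of $k_g$ does depend on the chosen orientation of the fibers, but so does $n_g$, and the two sign changes cancel in the product $\div(-n_g)=k_g$). So the pointwise computation above is valid at every point of $\Si$.

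An alternative, perhaps cleaner, route avoids the Levi-Civita connection entirely: using $\sigma_\Si = -i_{n_g}\mu_\Si$ from \eqref{nusi}, the divergence of $-n_g$ with respect to $\mu_\Si$ is characterized by $d\sigma_\Si = d(i_{-n_g}\mu_\Si) = \pounds_{-n_g}\mu_\Si = \div(-n_g)\,\mu_\Si$. So it suffices to show $d\sigma_\Si = k_g\,\mu_\Si$. One can verify this by evaluating both sides on the frame $\{T,n_g\}$: using the structure equations for the coframe dual to $\{T,n_g\}$ and the fact that $i_{C_z}^*\sigma_\Si = \mu_{C_z}$ (stated just before the lemma), the exterior derivative $d\sigma_\Si(T,n_g)$ reduces to the geodesic-curvature term, giving $k_g$. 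Either way, once $k_g = \div(-n_g)$ is established, $\int_\Si k_g\,\mu_\Si = \int_\Si \div(-n_g)\,\mu_\Si = \int_\Si \pounds_{-n_g}\mu_\Si = \int_\Si d\sigma_\Si = 0$ by Stokes' theorem, since $\Si$ is closed.

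I expect the main obstacle to be purely bookkeeping: carefully pinning down orientation conventions and confirming that the glued frame $\{T,n_g\}$ is the genuine Darboux frame on each fiber, so that "geodesic curvature of the fiber" and "$-\div(n_g)$" refer to the same sign of the same quantity. The analytic content — the divergence computation and Stokes — is entirely routine.
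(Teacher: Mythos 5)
Your proof is correct and is essentially the paper's argument: both reduce the lemma to the pointwise identity $\div n_g=-k_g$ via the Darboux equation $\nabla_TT=k_gn_g$ in the orthonormal frame $\{T,n_g\}$ (the paper phrases the divergence through $\pounds_{n_g}\mu_\Si=-d\sigma_\Si$ and $\sigma_\Si=T^\flat$, which is exactly your ``alternative route''), and then integrate over the closed surface. No gap.
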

\begin{proof}
Let $\nabla$ denote the   Levi-Civita connection on $\Si$.
We compute
\begin{align*}
\div n_g=(\pounds_{n_g}\mu_\Si)(T,n_g)
{=}-( d \sigma _{\Si})(T,n_g)
= \sigma _{\Si}([T,n_g])=T\cdot [T,n_g]
=-\nabla_TT\cdot n_g
=-k_g,
\end{align*}
using the identities $\nabla_TT=k_gn_g$ and $T^\flat= \sigma _\Si$.
\end{proof}

\subsection{Hamiltonian function {and Hamilton's equations}}

On the symplectic manifold $\Gr_a^{S,\be}$ we consider the Hamiltonian function suggested in \cite{khesin} and given by the total length of the vortex lines:
\begin{equation}\label{hama}
h(\Si,\be_\Si)
=\int_{\TT_\ell} \Vol (C_z)\, \vartheta_\ell\,,
\end{equation}
where $ \Vol (C_z)=\int_{ C_z}\mu_{ C_z}$, $z\in\TT_\ell$, denotes the length of the vortex line and $\vartheta_\ell$ is the angular form defined in Lemma \ref{lemma_31}.
By the Fubini theorem for the fibration $b_\Si$:
\begin{equation}\label{fubini}
\int_{\TT_\ell} \left( \int_{C_z}\al \right) \vartheta_\ell=\int_\Si\al\wedge\be_\Si, \text{ for all }\al\in\Om^1(\Si),
\end{equation}
we can write the Hamiltonian as 
\begin{equation}\label{hama2}
h(\Si,\be_\Si)
=\int_{\TT_\ell} \left( \int_{C_z} \mu_{C_z} \right)  \vartheta_\ell =\int_ \Sigma \sigma  _ \Sigma \wedge b_ \Sigma ^* \vartheta_\ell = \int_ \Sigma \sigma  _ \Sigma \wedge \beta _ \Sigma\stackrel{\eqref{nusi}}{=} \int _ \Sigma \beta _{ \Sigma }(n_g) \mu _ \Sigma,
\end{equation}
since $\mu_{C_z}=i_{C_z} ^* \sigma  _ \Sigma$. The last equality 
 shows how $h$ differs from the area of $ \Sigma $.

\begin{lemma}\label{didi}
In the description \eqref{split_a} of the tangent space to the coadjoint orbit, namely,
\begin{equation}\label{Oo}
T_{(\Si,\beta _\Si)} \operatorname{Gr}^{S,\beta }_a=C_0^\oo(\Si) \times  d  C^\oo(\Si),
\end{equation}
the differential of the Hamiltonian function $h$ is given by
\[
( d_{(\Si,\be_\Si)}h)(\rh, d\la)= - \int_ \Sigma \big( \rh  k_n\beta _ \Sigma(n_g) + k_g \lambda \big) \mu _ \Sigma,
\]
with $k_g$ and $k_n$ the geodesic and normal curvatures of the vortex lines on $ \Sigma $.
\end{lemma}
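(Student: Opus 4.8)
The plan is to compute the directional derivative of $h$ along a curve in $\Gr_a^{S,\be}$ whose velocity at the base point is a prescribed tangent vector $(\rho,d\la)\in C_0^\oo(\Si)\times dC^\oo(\Si)$, using the final expression $h(\Si,\be_\Si)=\int_\Si \sigma_\Si\wedge\be_\Si$ from \eqref{hama2}. Since both factors of this integrand vary, I would split the variation into two contributions: the variation of $\sigma_\Si=-i_{n_g}i_n\mu$ (and of the domain $\Si$) coming from the normal displacement $\rho n$, which is the horizontal part of the tangent vector; and the variation of $\be_\Si$ coming from the vertical part $d\la$, which by \eqref{split_a} and \eqref{T_pi_beta} corresponds to the Lie derivative $\pounds_{(kB^{\top_\Si})}\be_\Si$ when the tangent vector is realized as an infinitesimal generator — but it is cleaner here to just take an independent variation $\dot\be_\Si$ with $[\dot\be_\Si]=0$ in cohomology so that $\dot\be_\Si=d\la$ for some $\la\in C^\oo(\Si)$.

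For the vertical contribution, $\delta_v h=\int_\Si \sigma_\Si\wedge d\la$; integrating by parts (Stokes, no boundary) this is $-\int_\Si d\sigma_\Si\,\la = \int_\Si (\div n_g)\,\la\,\mu_\Si = -\int_\Si k_g\,\la\,\mu_\Si$, where the middle step uses $d\sigma_\Si = -(\pounds_{n_g}\mu_\Si) = -(\div n_g)\mu_\Si$ and the last step is exactly Lemma \ref{puls}. This already produces the second term $-\int_\Si k_g\la\,\mu_\Si$.

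For the horizontal contribution I would use the standard first-variation-of-a-hypersurface formula. Writing $h=\int_\Si \be_\Si(n_g)\,\mu_\Si$, a normal variation $\rho n$ of $\Si$ changes the induced volume form by the mean-curvature term, $\delta(\mu_\Si)=-\rho H_\Si\,\mu_\Si$ where $H_\Si$ is the (scalar) mean curvature of $\Si$, and it changes $n_g=n\times T$ and hence $\be_\Si(n_g)$ through the tilting of $n$; the key point is that what enters is the curvature of $\Si$ \emph{in the direction of the vortex line}, i.e.\ the normal curvature $k_n$ of $C_z$, not the full mean curvature. Concretely I expect the pointwise identity $\delta\big(\be_\Si(n_g)\mu_\Si\big) = -\rho\, k_n\,\be_\Si(n_g)\,\mu_\Si + (\text{a total derivative along the fibers})$, the total-derivative term integrating to zero over each closed fiber $C_z$ and hence over $\Si$ by the Fubini formula \eqref{fubini}. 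An alternative, and probably the slickest route, is to redo the computation fiberwise: $h=\int_{\TT_\ell}\Vol(C_z)\,\vartheta_\ell$, and the first variation of the length of a single curve $C_z\subseteq\RR^3$ under the displacement field $\rho n$ (restricted to $C_z$) is the classical $\delta\Vol(C_z)=-\int_{C_z}\rho\,(n\cdot kN)\,\mu_{C_z}=-\int_{C_z}\rho\,k_n\,\mu_{C_z}$, using $kN=k_gn_g+k_nn$ from \eqref{decomp_H} so that $n\cdot kN=k_n$. Integrating against $\vartheta_\ell$ and invoking \eqref{fubini} with $\al = \rho k_n (i_T{\cdot})^\flat$, or more directly noting that $\mu_{C_z}=i_{C_z}^*\sigma_\Si$ and $\sigma_\Si\wedge\be_\Si=\be_\Si(n_g)\mu_\Si$, gives $\delta_h h = -\int_\Si \rho\,k_n\,\be_\Si(n_g)\,\mu_\Si$, which is the first term.

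The main obstacle is the horizontal variation: one must be careful that the variation of the vortex-line direction $T$ (equivalently of $n_g$) does not contribute, and that only the displacement \emph{normal to $\Si$} matters for a tangent vector of $\Gr_a^{S,\be}$. The fiberwise computation of the first variation of arclength sidesteps most of this, because it intrinsically only sees the component of the variation field normal to the curve $C_z$ inside $\RR^3$, and $n\cdot kN = k_n$ by \eqref{decomp_H}; the tangential-to-$C_z$ part of $\rho n$ is zero anyway since $\rho n\perp T$. One should also check that a representative variation with the prescribed $(\rho,d\la)$ exists and that the computation is independent of the chosen extension, but this follows from infinitesimal transitivity (Lemma \ref{toto}) exactly as in the proof of the previous proposition. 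Summing the two contributions yields
\[
(d_{(\Si,\be_\Si)}h)(\rho,d\la) = -\int_\Si \rho\,k_n\,\be_\Si(n_g)\,\mu_\Si - \int_\Si k_g\,\la\,\mu_\Si = -\int_\Si\big(\rho\,k_n\,\be_\Si(n_g) + k_g\,\la\big)\mu_\Si,
\]
as claimed.
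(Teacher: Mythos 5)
Your treatment of the horizontal part coincides in substance with the paper's proof: the paper also reduces everything to the first variation of arc length of the fibers, $\tfrac{d}{dt}\big|_0\Vol(\phi^u_t(C_z))=-\int_{C_z}(u\cdot H_{C_z})\mu_{C_z}$, followed by the Fubini identity \eqref{fubini} and $\sigma_\Si\wedge\be_\Si=\be_\Si(n_g)\mu_\Si$, and that part of your argument is correct.

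The vertical contribution, however, contains a genuine error. For a function $\la$ and the $1$-form $\sigma_\Si$ on the closed surface $\Si$ one has $\int_\Si\sigma_\Si\wedge d\la=-\int_\Si d\la\wedge\sigma_\Si=+\int_\Si\la\, d \sigma_\Si$, not $-\int_\Si\la\, d \sigma_\Si$; equivalently, evaluating on the oriented orthonormal frame $\{T,n_g\}$ gives $\sigma_\Si\wedge d\la=n_g(\la)\,\mu_\Si$, whence $\int_\Si\sigma_\Si\wedge d\la=-\int_\Si\la\,(\div n_g)\,\mu_\Si=+\int_\Si k_g\la\,\mu_\Si$ by Lemma \ref{puls}. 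So the variation $\dot\be_\Si=+d\la$ that you chose actually produces $+\int_\Si k_g\la\,\mu_\Si$, the opposite of the term claimed in the lemma. The mismatch is not only this algebra slip: throughout the paper the label $(\rh,d\la)$ in \eqref{Oo} denotes the infinitesimal generator $\ze^{\Gr}_u$ with $\rh=u\cdot n$ and $\la=\be_\Si(u|_\Si^\top)$ (see \eqref{T_pi_beta} and the proofs of \eqref{cano} and of this lemma), whereas the actual velocity of the decoration along the flow, $\tfrac{d}{dt}\big|_0(\phi^u_t)_*\be_\Si=-\pounds_{u|_\Si^\top}\be_\Si=-d\la$, carries the opposite sign. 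Replacing the generator by the literal variation $\dot\be_\Si=+d\la$, as you propose for "cleanliness," therefore flips the sign of the vertical term, and your integration-by-parts error flips it back; two compensating sign mistakes are not a proof. The paper sidesteps this entirely by never separating the vertical variation: it differentiates $h$ along the flow of the full field $u$, where the moved fibers are $\phi^u_t(C_z)$, so both terms come out of the single identity $u\cdot H_{C_z}=k_n(u\cdot n)+k_g(u\cdot n_g)$ together with $(u\cdot n_g)\be_\Si(n_g)=\be_\Si(u|_\Si^\top)=\la$; the $k_g\la$ term arises from the tangential component of $u$ in the first variation of arc length, not from an integration by parts against $ d \sigma_\Si$. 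To repair your argument, compute the vertical contribution on the generator itself (slide the fibers by $u|_\Si^\top$ and apply the arc-length variation again), or fix once and for all which sign convention for the splitting \eqref{Oo} you are using and carry it consistently through both halves of the computation.
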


\begin{proof}
By the transitivity result in Lemma \ref{toto},
it is enough to check the identity on infinitesimal generators of $u\in\X_{\vol}(\RR^3)$:
\[
\ze^{\Gr}_u(\Si,\be_\Si)=\big(u|_\Si\cdot n, d \be_\Si(u|_\Si^\top)\big)
=:(\rh, d \la).
\]
Let $\phi^u_t$ denote the flow of $u$. We notice that
$b_{\phi^u_t(\Si)}^{-1}(z)=\phi_t^u(b_\Si^{-1}(z))$ for all $z\in\TT_\ell$.
Then
\begin{align*}
( d _{(\Si,\be_\Si)}h)(\ze^{\Gr}_u)
&=\frac{d}{dt}\Big|_0 h\big(\phi^u_t(\Si,\be_\Si)\big)=\frac{d}{dt}\Big|_0h\big(\phi^u_t(\Si),(\phi^u_t)_*\be_\Si\big)\\
&=\frac{d}{dt}\Big|_0\int_{\TT_\ell}\Vol(b_{\phi^u_t(\Si)}^{-1}(z))\vartheta_\ell
=-\int_{\TT_\ell}\left(\int_{C_z}(u\cdot H_{C_z})\mu_{C_z}\right)\vartheta_\ell\\
&= - \int_ \Sigma \big( (u\cdot n)  k_n\beta _ \Sigma(n_g) + k_g \be_\Si(u|_\Si^\top) \big) \mu _ \Sigma= - \int_ \Sigma \big( \rh k_n\beta _ \Sigma(n_g) + k_g \lambda \big) \mu _ \Sigma.
\end{align*}
At step four we use the standard fact from the literature
that the mean curvature vector field is the direction where the volume of the submanifold decreases the fastest:
\begin{align*}
\frac{d}{dt}\Big|_0\Vol(b_{\phi^u_t(\Si)}^{-1}(z))
=\frac{d}{dt}\Big|_0\Vol(\phi^u_t(C_z))
=\frac{d}{dt}\Big|_0\int_{C_z}(\phi^u_t)^*\,\mu_{\phi^u_t(C_z)}
=-\int_{C_z}(u\cdot H_{C_z})\mu_{C_z}.
\end{align*}
For the step five we used \eqref{decomp_H} and we applied the Fubini theorem 
\eqref{fubini} for the fibration $b_\Si$.
Here $\al= (u\cdot (k_gn_g+k_nn))\sigma _{\Si}\in\Om^1(\Si)$, since 
$\mu_{C_z}=i_{C_z} ^* \sigma  _ \Sigma$,
so that by \eqref{later} we get
$$
\al\wedge\be_\Si=(k_gu\cdot n_g+k_nu\cdot n)\sigma _\Si\wedge\be_\Si
=\big(k_g\be_\Si(u|_\Si^\top)+k_n(u\cdot n)\be_\Si(n_g)\big) \mu_\Si,
$$ 
using also the identity $\be_\Si(u|_\Si^\top)=(u\cdot n_g)\be_\Si(n_g)$.
\end{proof}

\begin{proposition}
The Hamiltonian vector field associated to the Hamiltonian function \eqref{hama}, 
written in the decomposition \eqref{Oo}, is
\begin{equation}\label{xham}
X_h(\Si,\be_\Si)=\big(k_g,- d (k_n\be_\Si(n_g))\big).
\end{equation}
\end{proposition}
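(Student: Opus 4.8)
The plan is to read off the Hamiltonian vector field $X_h = (\dot\rho, d\dot\lambda)$ directly from the symplectic form $\Omega$ in canonical coordinates \eqref{cano} together with the differential $d_{(\Si,\be_\Si)}h$ computed in Lemma~\ref{didi}. The defining equation is $\Omega_{(\Si,\be_\Si)}(X_h(\Si,\be_\Si), (\rho, d\lambda)) = (d_{(\Si,\be_\Si)}h)(\rho,d\lambda)$ for all $(\rho, d\lambda) \in C_0^\oo(\Si) \times d C^\oo(\Si)$.

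First I would write $X_h(\Si,\be_\Si) = (\rho_0, d\lambda_0)$ with $\rho_0 \in C_0^\oo(\Si)$ and $\lambda_0 \in C^\oo(\Si)$ (defined up to a constant), and expand the left-hand side using \eqref{cano}:
\begin{equation*}
\Omega_{(\Si,\be_\Si)}\big((\rho_0, d\lambda_0),(\rho, d\lambda)\big) = -\int_\Si (\rho_0 \lambda - \rho\,\lambda_0)\,\mu_\Si.
\end{equation*}
Comparing with Lemma~\ref{didi}, which gives $(d_{(\Si,\be_\Si)}h)(\rho,d\lambda) = -\int_\Si(\rho k_n \be_\Si(n_g) + k_g\lambda)\,\mu_\Si$, I would match the two integrands term by term: the coefficient of $\lambda$ forces $\rho_0 = k_g$, and the coefficient of $\rho$ forces $\lambda_0 = k_n\be_\Si(n_g)$, hence $d\lambda_0 = d(k_n\be_\Si(n_g))$, giving $X_h = (k_g, -d(k_n\be_\Si(n_g)))$ after accounting for the sign.

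Two small consistency checks are needed to make this rigorous rather than formal. First, $\rho_0 = k_g$ must actually lie in $C_0^\oo(\Si)$, i.e.\ $\int_\Si k_g\,\mu_\Si = 0$ — this is exactly Lemma~\ref{puls}, so the first component is a bona fide tangent vector to $\Gr_a^S$. Second, one must check that pairing against arbitrary $(\rho, d\lambda)$ with $\rho \in C_0^\oo(\Si)$ (rather than all of $C^\oo(\Si)$) still pins down $\lambda_0$ uniquely up to a constant and $d\lambda_0$ uniquely: since $\rho$ ranges over all of $C_0^\oo(\Si)$ and the pairing $C_0^\oo(\Si) \times dC^\oo(\Si) \to \RR$, $(\rho, d\lambda) \mapsto \int_\Si \rho\lambda\,\mu_\Si$ is nondegenerate (as noted before \eqref{split_a}), the form $d(k_n\be_\Si(n_g))$ is determined. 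The sign bookkeeping between the $-\int_\Si(\rho_0\lambda - \rho\lambda_0)$ convention and the minus sign in $-d(k_n\be_\Si(n_g))$ is the only place to be careful; there is no genuine obstacle, as this is essentially the statement that \eqref{cano} is the canonical form on a cotangent-bundle-like space and $X_h = (\partial h/\partial p, -\partial h/\partial q)$ in Darboux coordinates.

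There is no hard step here; the proposition is a direct algebraic consequence of the two preceding results. If anything, the only point worth spelling out is that because $\Omega$ is only weakly nondegenerate (these are Fréchet manifolds), existence of the Hamiltonian vector field is not automatic in general — but here we are handed an explicit candidate, so it suffices to verify that $i_{X_h}\Omega = dh$, which the term-by-term matching above does.
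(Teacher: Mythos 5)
Your proposal is correct and follows essentially the same route as the paper: verify that the candidate $(k_g,-d(k_n\be_\Si(n_g)))$ satisfies $i_{X_h}\Omega = dh$ by combining Lemma \ref{didi} with the canonical expression \eqref{cano} of $\Omega$, after using Lemma \ref{puls} to confirm $k_g\in C_0^\infty(\Si)$. Your additional remarks on the nondegeneracy of the pairing and on weak nondegeneracy in the Fr\'echet setting are sound refinements of the same argument, not a different method.
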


\begin{proof}
Notice that $k_g\in C^\oo_0(\Si)$ by Lemma \ref{puls},
so that the right hand side does belong to the tangent space \eqref{Oo}.
Let $(f, d \la)$ be an arbitrary tangent vector at $(\Si,\be_\Si)$.
Then we compute with the help of Lemma \ref{didi}:
\begin{align*}
\Om\big(X_h(\Si,\be_\Si),(\rh, d \la)\big)&=( d _{(\Si,\be_\Si)}h)(\rh, d \la)
=-\int_\Si(k_g\la+\rh k_n \be_\Si (n_g))\mu_\Si.
\end{align*}
The result follows now from the expression \eqref{cano} of $\Om$.
\end{proof}

\medskip 

In terms of the vorticity density vector field  $\gamma _ \Sigma $, the Hamiltonian function and the Hamiltonian vector field read:
\[
h(\Si,\be_\Si) =\int _ \Sigma \|\ga_\Si\|\mu _ \Si
,\qquad
X_h( \Sigma , \gamma _ \Sigma ) = \big(k_g, -  d  ( k_n \| \gamma _ \Sigma \|)\big).
\]

\paragraph{Hamilton's equations in terms of binormal curvature.} 
From \eqref{kB_decomposition} the Hamiltonian vector field can be written in terms of the binormal curvature $kB$ as
\[
X_h( \Sigma , \beta _ \Sigma ) = \big( (kB\cdot n) n,  d ( ( kB\cdot n_g) \beta _ \Sigma (n_g))\big)= \big( kB^{\perp_ \Sigma },  d  ( \beta _ \Sigma ( kB ^{\top_ \Sigma }))\big),
\]
with $kB^{\perp_ \Sigma }$ and $k B^{\top_ \Sigma }$ the components of $kB$ normal and tangent to the surface $ \Sigma $. 
This expression of the Hamiltonian vector field $X_h$ leads to the system \eqref{Ham_PDE}, equivalently written as
\[
\dot \Sigma = kB ^{\perp_\Sigma} , \qquad \dot \beta _ \Sigma = \pounds_{(kB^{\top_\Sigma})}  \beta _ \Sigma.
\]
Thus the deformation of the surface $ \Sigma $ is governed by the normal component of the binormal curvature of the vortex lines (but not by the magnitude of their strength $\beta _ \Sigma (n_g)=\| \gamma _ \Sigma \|$). The deformation of the vorticity density 1-form $ \beta _ \Sigma $ is governed by the variations of the volume $\beta _ \Sigma ( kB ^{\top_ \Sigma })= (\gamma _ \Sigma \times kB) \cdot n$ which involves both the tangential component of the binormal curvature of the vortex lines and their vorticity strength.


\subsection{Invariance and surfaces of revolution}\label{33}

\paragraph{$\SE(3)$-invariance and momentum map.} 
The Hamiltonian $h$  on the decorated Grassmannian
is invariant under the restriction of the
$\Diff_{\rm vol}(\RR^3)$ action \eqref{act} to the special Euclidean group 
$\SE(3)$, whose action  on $ \mathbb{R} ^3 $ is simply denoted 
$x \mapsto \Phi_g(x)= Ax + a$, $g= (A,a) \in \SE(3)$. The associated SE(3)-momentum map is determined as follows.
A Lie algebra element, $ \xi =  ( \omega , v) \in \mathfrak{se}(3)$, gives rise to the divergence free vector field $u(x) = \omega \times x + v$ with vector potential $\chi(x)= \alpha^\sharp (x) = ( \omega \cdot x) x + \frac{1}{2} v \times x$. 
From \eqref{mome}, the momentum map $\mathbb{J}: \operatorname{Gr}_a^{S, \beta } \rightarrow \mathfrak{se}(3) ^*$ is
\begin{equation}\label{euclid}
\langle\mathbb{J}(\Si,\be_\Si),(\om,v)\rangle
=\frac12\int_\Si\left(||x||^2\om^\flat+(v\x x)^\flat\right)\wedge\be_\Si.
\end{equation}
Its reformulation in terms of the vortex density $\ga_\Si$,
as in \eqref{gold}, gives:
\[
\mathbb{J}( \Sigma , \ga _ \Sigma ) =  \left( \int_ \Sigma ( \gamma_ \Sigma  \cdot x) \, x \, \mu _ \Sigma ,\frac{1}{2}  \int_ \Sigma (x \times \gamma _ \Sigma)  \mu _ \Sigma  \right).
\]  
From Noether theorem it follows that the momentum map $\mathbb{J}$ 
is preserved along any solution $( \Sigma (t), \beta _ \Sigma (t))$ of Hamilton's equations.

\paragraph{Flux homomorphism and isotropy groups.} The {\it flux homomorphism} {of a closed 1-form $\be$ with $\operatorname{Per}( \beta)= \ell \mathbb{Z}$, is defined by}
\begin{equation}\label{definition_c} 
c_\beta : \operatorname{Diff}(S, \beta ) \rightarrow \TT_\ell,\quad c_ \beta (\varphi )
:=(b \circ \varphi ^{-1}) /b,
\end{equation} 
where $b$ is a Cartan developing for $ \beta $. Because $\be= \de b $ is preserved by $\varphi $, 
the map $(b \circ \varphi^{-1} ) /b:S \rightarrow \TT_\ell$ is  constant and hence $c_\be$ is well-defined.
Also, it does not depend on the choice of the Cartan developing $b:S\to \TT_\ell$, since $c_\be$ is defined up to multiplication of $b$ by a constant in $\TT_\ell$.
The derivative of $c_ \beta $ at the identity is the Lie algebra homomorphism 
\begin{equation}\label{Lie_algebra_hom} 
 d  _e c_ \beta : \mathfrak{X} (S, \beta )\rightarrow \mathbb{R},\quad  d  _e c_ \beta (v)=-\beta (v).
\end{equation}

Let us consider a diffeomorphism $\ph\in\Diff_{\rm vol}(\RR^3)$ in the isotropy group of $(\Si,\be_\Si)$,
\ie~$\ph|_\Si\in\Diff(\Si)$ and $\ph^*\be_\Si=\be_\Si$.
Since  $\de b_\Si=\be_\Si=\ph|_\Si^*\de b_\Si=\de(b_\Si\o\ph|_\Si)$ it follows that $\ph|_\Si$ satisfies $b_\Si\o\ph|_\Si=cb_\Si$, for the constant 
$c=c_{\be_\Si}(\ph|_\Si^{-1})\in\TT_\ell$.
Thus  $\ph|_\Si$ is an automorphism of the fibration, that covers a rigid rotation of the base $\TT_\ell$,
namely $z\mapsto cz$.

\paragraph{Invariant submanifolds of circle-invariant vortex sheets.} 
We consider  the subgroup $\TT_{2\pi}$ of rotations around a given axis in $\RR^3$.
The subset of $\TT_{2\pi}$-invariant elements of the decorated Grassmannian $\Gr_a^{S,\be}$,
\begin{equation}\label{H_fixed_set} 
\mathcal{R}:= \{( \Sigma , \beta _ \Sigma )\in\Gr_a^{S,\be} \mid \Phi_ g( \Sigma )= \Sigma , \; \Phi ^* _g \beta _ \Sigma = \beta _ \Sigma, \text{ for all $g \in \TT_{2 \pi }$}\},
\end{equation} 
consists of surfaces of revolution around the axis with circle-invariant fibration.
It means that for each $(\Si,\be_\Si)\in\R$, the isotropy subgroup
contains the subgroup of rotations $\TT_{2\pi}$.
In particular the fibration $b_\Si$ satisfies $\Phi_g(C_z)= C_{c z}$, for all $z \in \TT_\ell$ and all $ g \in \TT_{2\pi}$. As above, the  constant $c$ depends on $g$ through the flux homomorphism $c_{\be_\Si}$.

Since both the symplectic form $\Om$ on $\Gr_a^{S,\be}$ and the Hamiltonian function $h$ are $\TT_{2\pi}$-invariant, we have the following general result.

\begin{lemma}\label{invariant_submanifolds} The Hamiltonian vector field $X_h$ is tangent to $\R$ and hence to each of its connected components $\R_{m,n}$, which are thus preserved by the Hamiltonian dynamics.
\end{lemma}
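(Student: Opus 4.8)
The statement to prove is Lemma \ref{invariant_submanifolds}: the Hamiltonian vector field $X_h$ is tangent to the fixed-point set $\R$ of the $\TT_{2\pi}$-action, so that each connected component $\R_{m,n}$ is preserved by the flow. The plan is to invoke the standard equivariant principle: if a symplectic group action preserves both the symplectic form and the Hamiltonian, then its fixed-point set is a (symplectic) submanifold to which the Hamiltonian vector field is tangent. Concretely, let $\Phi_g$ denote the action of $g\in\TT_{2\pi}$ on $\Gr_a^{S,\be}$ via \eqref{act}. We have already noted that $\Phi_g^*\Om=\Om$ (since $\Om$ is $\Diff_{\vol}(\RR^3)$-invariant, see the remarks after Proposition \ref{simp}, and $\TT_{2\pi}\subset\SE(3)\subset\Diff_{\vol}(\RR^3)$), and that $h\circ\Phi_g=h$ (the Hamiltonian $h$ is $\SE(3)$-invariant as recorded at the start of Section \ref{33}).

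First I would show that $X_h$ is $\TT_{2\pi}$-equivariant: from $\Phi_g^*\Om=\Om$ and $h\circ\Phi_g=h$ one gets $\iota_{T\Phi_g^{-1}\cdot X_h}\Om = \Phi_g^*(\iota_{X_h}\Om) = \Phi_g^*dh = d(h\circ\Phi_g)=dh = \iota_{X_h}\Om$, and nondegeneracy of $\Om$ forces $T\Phi_g\cdot X_h = X_h\circ\Phi_g$ for every $g$. Next, at a point $p=(\Si,\be_\Si)\in\R$ we have $\Phi_g(p)=p$ for all $g$, so the previous identity gives $T_p\Phi_g(X_h(p)) = X_h(p)$; that is, $X_h(p)$ is a $\TT_{2\pi}$-fixed vector in $T_p\Gr_a^{S,\be}$. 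Since $\R$ is precisely the fixed-point set and (by the standard local linearization of a compact — here circle — group action, or equivalently because $T_p\R$ is the fixed subspace of the linear isotropy representation of $\TT_{2\pi}$ on $T_p\Gr_a^{S,\be}$) one has $T_p\R = (T_p\Gr_a^{S,\be})^{\TT_{2\pi}}$, we conclude $X_h(p)\in T_p\R$. Hence $X_h$ is tangent to $\R$, the flow of $X_h$ preserves $\R$, and by continuity it preserves each connected component $\R_{m,n}$.

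The one point that deserves care — and which I expect to be the main (mild) obstacle — is the identification $T_p\R = (T_p\Gr_a^{S,\be})^{\TT_{2\pi}}$ in the Fréchet (infinite-dimensional) setting, where the Bochner linearization theorem for compact group actions is not automatic. In the case at hand this can be handled directly: a tangent vector at $p=(\Si,\be_\Si)$ is a pair $(\rh,d\la)$ with $\rh\in C_0^\oo(\Si)$ and $d\la\in dC^\oo(\Si)$ by \eqref{Oo}, the $\TT_{2\pi}$-action on such pairs is the obvious pushforward action of the rotation subgroup, and a curve $t\mapsto(\Si(t),\be_{\Si(t)})$ through $p$ lying in $\R$ exists realizing any prescribed $\TT_{2\pi}$-invariant $(\rh,d\la)$ — one simply averages a representing isotopy over the compact group $\TT_{2\pi}$, which is possible because the ambient diffeomorphism group acts and $\TT_{2\pi}$ is compact. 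Thus $T_p\R$ surjects onto, and is contained in, the invariant subspace, giving the needed equality. With this in hand the tangency of $X_h$ to $\R$ (and to each $\R_{m,n}$) follows as above.
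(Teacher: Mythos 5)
Your argument is correct and is exactly the standard principle the paper invokes without proof (the lemma is stated immediately after the observation that $\Om$ and $h$ are $\TT_{2\pi}$-invariant, with no further justification), so you are filling in the same approach rather than taking a different one. You also correctly isolate the only delicate point in the Fr\'echet setting — that the fixed subspace of $T_p\Gr_a^{S,\be}$ coincides with $T_p\R$ — and your averaging argument for realizing an invariant tangent vector by a curve of rotation-invariant vortex sheets handles it adequately.
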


\paragraph{Description of circle-invariant vortex sheets.} By rotating a closed plane curve $\Ga$, parametrized by $(\xi(\rho),\et(\rho))$,
with $\xi ( \rho  ) >0$, around the vertical axis, we obtain the parametrization of
a surface of revolution
\begin{equation}\label{param}
\Si:\;(\xi(\rho)\cos\th,\xi(\rho)\sin\th,\et(\rho)).
\end{equation}
Any $\TT_{2\pi}$-invariant closed 1-form $\be_\Si$ on the surface of revolution
is of the form 
\begin{equation}\label{betsi} 
\beta _ \Sigma = \zeta _ \rho  d \rho  + c d \theta .
\end{equation}
Since $\Per(\be_\Si)=\ell\ZZ$, the two terms must have period groups
$\Per(\ze_\rh d\rh)=m\ell\ZZ$ and $\Per(cd\th)=n\ell\ZZ$, with $m,n$ coprime natural numbers. In particular the constant must be $c=\frac{n\ell}{2\pi}$
and the real valued function $\ze$ satisfies the condition $\ze(\rho+2\pi)=\ze(\rho)+m\ell$.
The fibration projection $b_\Si:\Si\to\TT_\ell$ (a Cartan developing of $\be_\Si$) takes the form $b_\Si(\rho,\th)=p_\ell\left(\ze(\rh)+\tfrac{n\ell}{2\pi}\th\right)$,
for the canonical projection $p_{\ell}:\RR\to\TT_{\ell}$.

We denote by $\R_{m,n}$, $m,n$ coprime, the subset of all circle-invariant vortex sheets 
with vorticity density of the form $\be_\Si=\ze_\rh d\rh+\frac{n\ell}{2\pi}d\th$
with $\Per(\ze_\rh d\rh)=m\ell\ZZ$.
We show that $\R_{m,n}$ is connected.
Any two surfaces of revolution can be continuously deformed one into the other,
so let us consider two vortex sheets in $\R_{m,n}$ supported on the same
surface: $(\Si,\be_0)$ and $(\Si,\be_1)$. Then 
$\be_t:=(1-t)\be_0+t\be_1=\left((1-t)\ze_\rh^0+t\ze_\rh^1\right)d\rh+\frac{n\ell}{2\pi}d\th$ is a path from $\be_0$ to $\be_1$ 
consisting of vorticity densities supported on $\Si$ with $(\Si,\be_t)\in\R_{m,n}$.
We conclude that $\R$ is a union of connected components:
\[
\R=
\bigcup_{m,n\text{ coprime }}\R_{m,n}.
\]
The {connected component} $\R_{1,0}$ contains surfaces of revolution fibered by parallel circles, where $ \beta _ \Sigma = \zeta _ \rho  d \rho$ with $\Per(\ze_\rh d\rh)=\ell\ZZ$.
The {connected component} $\R_{0,1}$ contains surfaces of revolution with meridian-like fibrations and $ \beta _ \Sigma $ of the form $ \beta _ \Sigma = \zeta _ \rho  d \rho  + \frac{\ell}{2\pi}d \theta $, with exact 1-form $\ze_\rh d\rh$. 

\begin{remark}\label{enne}
\rm
Note that on $ \mathcal{R} _{m,n}$ with $n\neq 0$ (hence $c\ne 0$), the vortex lines have only one associated 1-form $ \beta _ \Sigma $ (up to sign), because its period group is fixed: $\ell\ZZ$. Thus the dynamics of the vortex lines $C_z\subseteq\Si$ determines the dynamics of the vorticity density 1-form $\be_\Si$. This is not the case for $ \mathcal{R} _{1,0}$ whose vortex lines stay parallel, with vorticity density changing according to $ \zeta _ \rho  $.
\end{remark}

\subsection{Looking for stationary points}

In this section we show there are no stationary points of the Hamilton equations for vortex sheet dynamics in the $\TT_{2\pi}$-invariant subset $\R$.
We conjecture that there are no stationary points on the whole 
decorated Grassmannian $\Gr_a^{S,\be}$. Notice that also the vortex filament equation 
(on the knot space in $\RR^3$) doesn't possess stationary points.

\begin{lemma}\label{cabe}
The vortex sheet $(\Si,\be_\Si)$ is a stationary point of the Hamiltonian vector field $X_h$ if and only if all the vortex lines are geodesics of the surface $\Si$ and  the product $k\be_{\Si}(B)$ (with $k$ the curvature of the fiber and $B$ its binormal vector field) is constant on $\Si$.
\end{lemma}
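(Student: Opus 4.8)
The plan is to read everything off the explicit formula for the Hamiltonian vector field established above, namely $X_h(\Si,\be_\Si)=\big(k_g,-d(k_n\be_\Si(n_g))\big)$ in the coordinates \eqref{Oo}. A tangent vector $(\rho,d\la)$ to $\Gr_a^{S,\be}$ vanishes precisely when $\rho\equiv 0$ and $d\la=0$, i.e.\ when $\la$ is locally constant, hence constant since $\Si$ is connected. Thus the first step is simply to record that $X_h(\Si,\be_\Si)=0$ if and only if $k_g\equiv 0$ on $\Si$ and the smooth function $k_n\,\be_\Si(n_g)$ is constant on $\Si$. (The condition $k_g\equiv 0$ is consistent with $k_g\in C_0^\oo(\Si)$ from Lemma~\ref{puls}.)

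Next I would translate these two conditions into the geometric language of the statement. The function $k_g$ is by construction the geodesic curvature of the vortex lines $C_z$, and $\Si$ is the disjoint union of these fibers, so $k_g\equiv 0$ on $\Si$ is exactly the assertion that every vortex line is a geodesic of $\Si$. Assuming this, the curvature vector $kN$ of a vortex line $C_z\subseteq\RR^3$ is orthogonal to $\Si$: from $\nabla_TT=k_gn_g=0$ and \eqref{decomp_H} one gets $kN=k_nn$, and then, applying $T\times(\,\cdot\,)$ and using that the Darboux frame $\{T,n_g,n\}$ is right-handed (so $T\times n=-n_g$), one obtains $kB=-k_nn_g$ — which is also \eqref{kB_decomposition} with $k_g=0$. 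In particular the binormal $B$ is tangent to $\Si$ and collinear with $n_g$ wherever $k\neq 0$, and there $k\,\be_\Si(B)=-k_n\,\be_\Si(n_g)$; since $kB$ and $n_g$ are globally smooth this identity extends smoothly across the zero set of $k$ (where $k_n$ also vanishes). Hence, under the geodesic hypothesis, $k_n\,\be_\Si(n_g)$ is constant on $\Si$ if and only if $k\,\be_\Si(B)$ is, and combining the two paragraphs gives the claimed equivalence.

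The one genuine subtlety — and the point I would treat most carefully — is the meaning of $\be_\Si(B)$, since a priori the Frenet binormal $B$ is an $\RR^3$-valued vector field along $C_z$, not a vector field tangent to $\Si$, and it is undefined at the inflection points where $k=0$. Both issues dissolve under the geodesic condition, which forces $N=\pm n$ and hence $B=\mp n_g\in\X(\Si)$ where $k\neq 0$; equivalently, one works throughout with the globally smooth binormal curvature vector $kB$ rather than with $B$ and $k$ separately, whose tangential part along $\Si$ is $-k_nn_g$, so that $k\,\be_\Si(B)$ is unambiguously the smooth function $-k_n\,\be_\Si(n_g)$ on all of $\Si$. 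A secondary, purely routine, bookkeeping point is keeping the orientation conventions for the Darboux frame $\{T,n_g,n\}$ and the Frenet frame $\{T,N,B\}$ consistent with \eqref{decomp_H} and \eqref{kB_decomposition}.
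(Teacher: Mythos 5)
Your proof is correct and follows essentially the same route as the paper: read stationarity off the explicit formula $X_h=(k_g,-d(k_n\be_\Si(n_g)))$, identify $k_g\equiv 0$ with the geodesic condition, and use $kB=-k_nn_g$ (i.e.\ \eqref{kB_decomposition} with $k_g=0$) to rewrite the constancy of $k_n\be_\Si(n_g)$ as that of $k\be_\Si(B)$. Your extra care about the points where $k=0$, where $B$ is undefined and one should really work with the smooth field $kB$, is a worthwhile refinement that the paper's own proof passes over silently.
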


\begin{proof}
A stationary point of the vector field $X_h$  in \eqref{xham} is characterized by $k_g=0$ and 
$ d (k_n\be_\Si(n_g))=0$ for the fibers of $b_\Si$. 
The first condition ensures that the fibers are geodesics, so $k_n=\pm k$ and {$n_g=\mp B$} the unit binormal vector field.
The second condition means that $k_n\be_\Si(n_g)={- k\be_\Si(B)}$ is a constant function on $\Si$.
\end{proof}

\begin{example}\label{Example_meridians}
\rm
Let $\Si$ be the torus in $\RR^3$ obtained by rotating a circle of radius $r$ around the vertical axis,
\[
((R+r\cos\rho)\cos\th,
(R+r\cos\rho)\sin\th,r\sin\rho).
\] 
We assign an orientation so that the unit normal vector field of the surface points outwards:
$n=(r\cos\rho\cos\th,r\cos\rho\sin\th,r\sin\rho)$.
We endow $\Si$ with the 1-form  $\be_\Si=\tfrac{\ell}{2\pi} d \th$: closed, without zeros, and with period group $\ell\ZZ$, thus $(\Si,\be_\Si)\in\R_{0,1}$.
Its Cartan developing $b_\Si$ is the geodesic fibration 
by meridian circles.

We show that the second condition in Lemma \ref{cabe} is not satisfied.
The imposed condition $\be_\Si(n_g)>0$ ensures that $n_g=\frac{1}{R+r\cos\rho}\pa_\th$, so $T=\frac{1}{r}\pa_\rh$.
The unit normal vector field $N$ of the meridian circle points inwards,
so $N=-n$. Thus the unit binormal vector field satisfies $B=n_g$.
The geodesic curvature $k_g$ of the meridian circle vanishes and the normal curvature $k_n=-k=-\tfrac{1}{r}$ is constant.
Still, the function $k\be_\Si(B)=-\tfrac{\ell} {2\pi r(R+r\cos\rho)}$ is not constant, hence
$(\Si,\be_\Si)$ is not a stationary point of the Hamiltonian vector field $X_h$.
\end{example}

More generally, let us see what happens for $\mathbb{T}_{2 \pi }$-invariant vortex sheets $(\Si, \be_\Si)\in\R$,
parametrized as in \eqref{param}. One computes that it induces a geodesic fibration $b_\Si$
only if the 1-form is
\[
 \beta _ \Sigma = -\frac{c\ka r( \rho  )}{ \xi ( \rho  )\sqrt{  \xi ( \rho  )^2 -\ka^2}}d\rh+cd\th,
\]
where $r ^2 = \xi _\rh ^2 + \et_\rh^2 $,
and $\ka$ is a constant.
A straightforward computation yields:
\begin{equation*}
k_n \beta _ \Sigma (n_g)=  \frac{c}{ r ^2\xi \sqrt{ r ^2 + \xi^2 \ze _\rh ^2  /c^2 }} \big( \eta _{\rh\rh} \xi _\rh- \eta _\rh \xi _{\rh\rh} + \xi \eta _\rh  \zeta _\rh ^2/c^2 \big),
\end{equation*}
which cannot be constant for our $\ze_\rh=-\frac{c\ka r}{ \xi \sqrt{  \xi ^2 -\ka^2}}$ imposed by the geodesic fibration.
Once more the Lemma \ref{cabe} implies that $(\Si,\be_\Si)$ is not a stationary point of $X_h$, hence the following result.

\begin{theorem}
There are no stationary points for the Hamiltonian vector field $X_h$ 
in \eqref{xham} restricted to $\R$.
\end{theorem}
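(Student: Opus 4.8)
The plan is to combine Lemma~\ref{cabe} with the explicit classification of $\TT_{2\pi}$-invariant vortex sheets into their connected components $\R_{m,n}$. By Lemma~\ref{cabe}, a stationary point in $\R$ must have all vortex lines geodesic and the product $k\be_\Si(B)$ constant on $\Si$. So the first step is to impose the geodesic condition on a surface of revolution parametrized as in \eqref{param}, and to show that this forces the 1-form to have the special shape
\[
\beta_\Si = -\frac{c\ka\, r(\rho)}{\xi(\rho)\sqrt{\xi(\rho)^2-\ka^2}}\,d\rho + c\,d\th ,
\]
with $r^2=\xi_\rho^2+\et_\rho^2$ and $\ka$ a constant. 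The geometric input here is Clairaut's relation for geodesics on a surface of revolution: a curve is a geodesic precisely when $\xi\sin\psi$ is constant along it ($\psi$ the angle with the meridians), which translates the tangency condition $\be_\Si(T)=0$ into the stated ODE relating $\ze_\rho$ to $\xi,\et$. One must also treat the degenerate case: on $\R_{m,0}$ the fibers are the parallel circles $\th=\mathrm{const}$ adjusted by $\rho$, which are geodesics only at critical points of $\xi$, so a geodesic fibration by such curves cannot exist on a closed surface of revolution unless it degenerates — hence $\R_{m,0}$ contributes no stationary points directly, and one is reduced to the case $c\neq 0$ handled by the displayed formula.

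The second step is the computation already indicated in the excerpt: substituting the geodesic-forced $\ze_\rho$ into the general expression for $k_n\be_\Si(n_g)$ yields
\[
k_n\be_\Si(n_g) = \frac{c}{r^2\,\xi\,\sqrt{r^2+\xi^2\ze_\rho^2/c^2}}\Big(\et_{\rho\rho}\xi_\rho - \et_\rho\xi_{\rho\rho} + \xi\et_\rho\ze_\rho^2/c^2\Big),
\]
and then to argue that this function of $\rho$ cannot be constant. The natural way is by a contradiction/asymptotic argument: the generating curve $\Ga$ is closed, so $\xi$ attains an interior maximum $\rho_0$ where $\xi_\rho(\rho_0)=0$; at such a point the geodesic-forced formula for $\ze_\rho$ blows up unless $r(\rho_0)=0$ (impossible for a regular curve) — so in fact the geodesic fibration by these curves already cannot close up smoothly on a compact surface of revolution, which is the cleanest route. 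Alternatively, even granting a geodesic fibration exists on some non-compact piece, one evaluates the displayed quantity at two points (e.g.\ where $\ze_\rho=0$ versus elsewhere) and checks the values differ; I would present whichever of these is shortest, most likely: \emph{a geodesic fibration of the required type forces a singularity at an extremum of $\xi$, so no closed surface of revolution admits one, except when $c\ne0$ with meridian-type behavior, and there the boxed constancy fails by the computation above.}

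The third and final step is bookkeeping over the connected components. By the decomposition $\R=\bigcup_{m,n\text{ coprime}}\R_{m,n}$ established earlier, and since $X_h$ is tangent to each $\R_{m,n}$ (Lemma~\ref{invariant_submanifolds}), it suffices to rule out stationary points component by component. For $n=0$ (the case $\R_{m,0}$, in particular $\R_{1,0}$) the fibers are parallels, never all geodesic on a closed surface of revolution, so Lemma~\ref{cabe} is violated immediately. For $n\neq 0$ one is in the situation $c=\tfrac{n\ell}{2\pi}\neq 0$ just analyzed, covered by Example~\ref{Example_meridians} (the torus with meridian fibration, $\R_{0,1}$) and its generalization above. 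Hence in every component Lemma~\ref{cabe}'s criterion fails, and $X_h|_\R$ has no zeros.

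I expect the main obstacle to be the second step — showing the displayed expression $\et_{\rho\rho}\xi_\rho - \et_\rho\xi_{\rho\rho} + \xi\et_\rho\ze_\rho^2/c^2$, after the geodesic substitution for $\ze_\rho$, genuinely fails to be proportional to $r^2\xi\sqrt{r^2+\xi^2\ze_\rho^2/c^2}$ by a constant. Making this rigorous means either a careful global argument (using compactness of $\Ga$ and the forced blow-up of $\ze_\rho$ at extrema of $\xi$, which I find the most convincing and shortest) or a direct two-point evaluation; the risk is that one must rule out isolated coincidental solutions of the resulting ODE, so the compactness obstruction is the safer backbone of the proof.
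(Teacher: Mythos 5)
Your outline coincides with the paper's: invoke Lemma~\ref{cabe}, split $\R$ into the components $\R_{m,n}$, dispose of $\R_{1,0}$ because parallels cannot all be geodesics on a closed surface of revolution, and for $c\neq0$ use Clairaut's relation to force $\be_\Si=-\frac{c\ka r}{\xi\sqrt{\xi^2-\ka^2}}d\rho+c\,d\th$ and then show $k_n\be_\Si(n_g)$ is non-constant. The problem is that the one step carrying all the weight --- the non-constancy --- is exactly where your proposed argument breaks. Your ``safest backbone'' claims that the geodesic-forced $\ze_\rho$ blows up at an extremum of $\xi$, so that no closed surface of revolution admits such a fibration. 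This is false: the expression $\sqrt{\xi^2-\ka^2}$ is singular only where $\xi=|\ka|$, and at an interior \emph{maximum} of $\xi$ nothing degenerates; more importantly, whenever $|\ka|<\min\xi$ --- in particular for $\ka=0$, which gives the meridian fibration of \emph{any} torus of revolution --- the geodesic fibration exists globally and smoothly. That is precisely the paper's Example~\ref{Example_meridians}, where the first condition of Lemma~\ref{cabe} holds and one must actually compute $k\be_\Si(B)=-\tfrac{\ell}{2\pi r(R+r\cos\rho)}$ to see that the second fails. So the compactness obstruction you lean on does not exist, and the cases it was meant to eliminate are the ones that genuinely occur.

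Your fallback, a ``two-point evaluation where $\ze_\rho=0$ versus elsewhere,'' is also unavailable: for $\ka\neq0$ the forced $\ze_\rho=-\frac{c\ka r}{\xi\sqrt{\xi^2-\ka^2}}$ never vanishes on a regular curve ($r>0$), while for $\ka=0$ it vanishes identically, so there are no two such points to compare. What remains is then only the unproven assertion that
\[
k_n\be_\Si(n_g)=\frac{c}{r^2\xi\sqrt{r^2+\xi^2\ze_\rho^2/c^2}}\bigl(\et_{\rho\rho}\xi_\rho-\et_\rho\xi_{\rho\rho}+\xi\et_\rho\ze_\rho^2/c^2\bigr)
\]
cannot be constant after the geodesic substitution --- which is the same assertion the paper makes (admittedly tersely), but which your proposal was supposed to justify and does not. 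To close the gap you would need an actual argument here, e.g.\ an explicit verification in the $\ka=0$ case as in Example~\ref{Example_meridians} together with an analysis of the resulting ODE for general $\ka$, rather than the compactness claim.
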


\noindent{\bf Conjecture}: $X_h$ has no stationary points on the whole $\Gr_a^{S,\be}$.


\subsection{Surfaces of revolution fibered by parallel circles}\label{10}

In this section we determine the expression of the Hamilton equations as partial differential equations for the parametrization of the vortex sheet. This is important for further analytical and numerical studies of these equations. We focus on the invariant submanifold $\R_{1,0}$ of vortex sheets fibered by parallel circles, while the corresponding forms on the other connected components $ \mathcal{R} _{m,n}$ will be treated in a future work.
The component $\R_{1,0}$ is the only one in which the vorticity density
is not fully determined by the fibration, see Remark \ref{enne}.

Let $\Si$ denote the surface obtained by rotating the closed plane curve $\Ga$, parametrized by $(\xi(\rho),\et(\rho))$, as in \eqref{param}.
We endow it with the orientation inducing the unit normal vector field pointing inwards:
$$
n=\frac{1}{\sqrt{\xi_\rh^2+\et_\rh^2}}(-\et_\rh\cos\th,-\et_\rh\sin\th,\xi_\rh).
$$
The induced metric on $\Si$ is 
$(\xi_\rh^2+\et_\rh^2)d\rh^2+\xi^2d\th^2$ with orientation compatible
volume form 
$$\mu_\Si=i_n\mu=\xi\sqrt{\xi_\rh^2+\et_\rh^2}d\rh \wedge d\th.$$
Let us consider the fibration of the surface of revolution $\Si$ by its parallel circles
(the coordinate lines $\rh=\ $constant), which correspond to the connected component $ \mathcal{R}_{1,0}$.
This means that the fibration projection $b_\Si$ depends only on the parameter $\rh$, hence $\be_\Si=\ze_\rh d\rh$ with $\ze_\rh>0$.

The Darboux frame $\{T,n_g,n\}$ that satisfies $\be_\Si(n_g)>0$ has 
\begin{equation*}\label{enge}
n_g=\frac{1}{\sqrt{\xi_\rh^2+\et_\rh^2}}(\xi_\rh\cos\th,\xi_\rh\sin\th,\et_\rh)=\frac{1}{\sqrt{\xi_\rh^2+\et_\rh^2}}\pa_\rh
\end{equation*}
and unit tangent vector field
$T=(\sin\th,-\cos\th,0)=-\frac{1}{\xi}\pa_\th$.
Consequently, the Frenet frame $\{T,N,B\}$ of a parallel circle has $N=-(\cos\th,\sin\th,0)$, hence $B=T\x N=(0,0,-1)$.

Now the normal curvature $k_n$ and the geodesic curvature $k_g$ can be computed from the identity $kN=k_nn+k_gn_g$, where $k=\frac{1}{\xi}$
is the curvature of a parallel circle. We obtain:
\begin{equation}\label{knkg}
k_n=\frac{\et_\rh}{\xi\sqrt{\xi_\rh^2+\et_\rh^2}} \quad\quad
k_g=-\frac{\xi_\rh}{\xi \sqrt{\xi_\rh^2+\et_\rh^2}} .
\end{equation}

\medskip

The Hamiltonian vector field on the manifold of vortex sheets
$\Gr_a^{S,\be}$, restricted to the component $ \mathcal{R} _{1,0}$, reads
\begin{equation}\label{X_h_fine}
X_h(\Si,\be_\Si)=\big(k_g,- d (k_n\be_\Si(n_g))\big)=\left(-\frac{\xi_\rh}{\xi \sqrt{\xi_\rh^2+\et_\rh^2}},
- d \left(\frac{\et_\rh\ze_\rh}{\xi(\xi_\rh^2+\et_\rh^2)}\right)\right).
\end{equation}
This formula, {not depending on the variable $\th$, 
ensures that $ \mathcal{R} _{1,0}$ is preserved by the Hamiltonian dynamics,} consistently with Lemma \ref{invariant_submanifolds}.

The Hamiltonian dynamics above induces a dynamics 
$\Ga(t)$ on the space of oriented closed curves in the plane: $\Ga_t=k_g n_\Ga$,
where $n_\Ga$ denotes the unit normal vector field to the plane curve $\Ga$.
Let $\Gr^{S^1}(\RR^2)$ denote the nonlinear Grassmannian of oriented
closed plane curves. The euclidean metric on $\RR^2$ defines in a natural way 
a principal connection on the principal bundle $\Emb(S^1,\RR^2)\to\Gr^{S^1}(\RR^2)$
with structure group $\Diff_+(S^1)$, as in \eqref{conn}.
The horizontal lift of $\Ga(t)$ is the curve $f(t)=(\xi(t),\et(t))\in\Emb(S^1,\RR^2)$
of parametrizations that satisfies
\begin{equation}\label{horiz_param}
f_t=k_g n_{\Ga}\o f,\quad \Ga=f(S^1).
\end{equation} 
The next theorem spells out the dynamics of the parametrization functions $\xi$ and $\et$,
joined with that of the function $\ze$ that describes the vorticity density.

\begin{theorem}[{Hamilton's equations on $\R_{1,0}$}]
Let $(\Si(t),\be(t))$ be a solution of the Hamilton equation starting at $(\Si,\be_\Si) \in \R_{1,0}$, and let $\Ga(t)$ denote the closed plane curve that describes the surface of revolution $\Si(t)$.
The parametrization $f(t, \rh)=(\xi(t,\rh),\et(t,\rh))$ of this curve given in \eqref{horiz_param}  is governed by the system of partial differential equations deduced from
 the first component of \eqref{X_h_fine}:
\begin{equation}\label{eqxy}
\xi_t=\frac{\xi_\rh\et_\rh}{\xi(\xi_\rh^2+\et_\rh^2)} \quad\quad
\et_t=-\frac{\xi_\rh^2}{\xi (\xi_\rh^2+\et_\rh^2)}.
\end{equation}
The variation of the vorticity density along the parallel circles is governed by the partial differential equation deduced from
 the second component of \eqref{X_h_fine}: 
\begin{equation}\label{eqze}
\ze_t=-\frac{\et_\rh\ze_\rh}{\xi(\xi_\rh^2+\et_\rh^2)}.
\end{equation}
It depends on a solution $(\xi,\et)$ of \eqref{eqxy}.
\end{theorem}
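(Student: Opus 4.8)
The plan is to translate the intrinsic Hamiltonian vector field formula \eqref{X_h_fine} into coordinates via the principal connection on $\Emb(S^1,\RR^2)\to\Gr^{S^1}(\RR^2)$. First I would record what the symplectic tangent-space identification \eqref{Oo} means here: the first slot of $X_h$, namely $k_g\in C_0^\oo(\Si)$, is a function depending only on $\rho$, so it defines an element of $T_\Ga\Gr^{S^1}(\RR^2)$ in the identification $T_\Ga\Gr^{S^1}(\RR^2)\simeq C^\oo(\Ga)$ (via $v_\Ga\leftrightarrow v_\Ga\cdot n_\Ga$), giving the curve flow $\Ga_t=k_g n_\Ga$ on the space of plane curves. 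Then I would invoke the horizontal lift with respect to the Euclidean connection on that principal bundle: the parametrization $f(t)=(\xi(t,\rho),\et(t,\rho))$ chosen so that $f_t=(k_g n_\Ga)\o f$ is precisely \eqref{horiz_param}, and this makes the parametrization well-defined (the reparametrization freedom is fixed by horizontality).

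Next I would simply substitute. The unit normal to the plane curve $\Ga$ parametrized by $(\xi,\et)$ with the chosen (inward) orientation is $n_\Ga=\frac{1}{\sqrt{\xi_\rho^2+\et_\rho^2}}(-\et_\rho,\xi_\rho)$, the same expression (dropping the $\th$ slots) as the surface normal $n$ computed just above the theorem. With $k_g=-\frac{\xi_\rho}{\xi\sqrt{\xi_\rho^2+\et_\rho^2}}$ from \eqref{knkg}, the identity $f_t=k_g n_\Ga$ reads componentwise
\begin{align*}
\xi_t &= k_g\cdot\Big(-\frac{\et_\rho}{\sqrt{\xi_\rho^2+\et_\rho^2}}\Big)=\frac{\xi_\rho\et_\rho}{\xi(\xi_\rho^2+\et_\rho^2)},\\
\et_t &= k_g\cdot\frac{\xi_\rho}{\sqrt{\xi_\rho^2+\et_\rho^2}}=-\frac{\xi_\rho^2}{\xi(\xi_\rho^2+\et_\rho^2)},
\end{align*}
which is \eqref{eqxy}. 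For the vorticity density, I would use that on $\R_{1,0}$ one has $\be_\Si=\ze_\rho\,d\rho$, so $\be_\Si$ is parametrized by the single function $\ze=\ze(t,\rho)$; the evolution $\dot\be_\Si=-d(k_n\be_\Si(n_g))$ from the second component of \eqref{X_h_fine}, together with $\be_\Si(n_g)=\ze_\rho/\sqrt{\xi_\rho^2+\et_\rho^2}$ and $k_n=\frac{\et_\rho}{\xi\sqrt{\xi_\rho^2+\et_\rho^2}}$, gives $\dot\be_\Si=-d\big(\frac{\et_\rho\ze_\rho}{\xi(\xi_\rho^2+\et_\rho^2)}\big)$; since $\dot\be_\Si=\dot\ze_\rho\,d\rho=d(\dot\ze)$ modulo the choice of primitive, comparing primitives (both vanish where we normalize, or simply matching the $d\rho$-coefficients) yields \eqref{eqze}.

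The one genuinely substantive point — as opposed to routine substitution — is the compatibility between the ambient horizontal lift used to define $f(t)$ as a curve of embeddings of $S^1$ into $\RR^2$ and the Hamiltonian flow, which lives a priori on embeddings of the genus-one surface $S$ into $\RR^3$. I would argue that circle invariance makes the surface flow the suspension of the plane-curve flow: the $\TT_{2\pi}$-action commutes with everything, so an equivariant horizontal lift on $\Emb(S,\RR^3)$ restricted to a profile curve is an horizontal lift on $\Emb(S^1,\RR^2)$, because the Euclidean connection \eqref{conn} is built from orthogonal projection and respects the rotational splitting $T\RR^3=T(\text{axis plane})\oplus(\text{rotational direction})$; in particular the horizontal part of the surface velocity has no $\pa_\th$-component when the generator is $\TT_{2\pi}$-invariant, so $\th$ stays an honest coordinate and the parametrization reduces to $(\xi(t,\rho),\et(t,\rho))$ with $\th$ inert. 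I would also note the decoupling claim in the last sentence: \eqref{eqxy} involves only $\xi,\et$, so it can be solved first, after which \eqref{eqze} is a linear transport equation for $\ze$ along the (now known) coefficient field — this is immediate from the form of the equations and needs no argument beyond inspection. The remaining check, that the right-hand sides indeed land in the correct tangent space (e.g. that $k_g\in C^\oo_0(\Si)$, already guaranteed by Lemma \ref{puls}, and that the second component is an exact $1$-form), is automatic from the Proposition establishing \eqref{xham}.
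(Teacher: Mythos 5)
Your proposal is correct and follows essentially the same route as the paper, which treats the theorem as a direct consequence of the preceding setup: the first component $k_g$ of $X_h$ in \eqref{X_h_fine} is $\th$-independent, so it induces the profile-curve flow $\Ga_t=k_g n_\Ga$, whose horizontal lift \eqref{horiz_param} combined with the expressions \eqref{knkg} for $k_g$, $k_n$ and $\be_\Si(n_g)=\ze_\rh/\sqrt{\xi_\rh^2+\et_\rh^2}$ yields \eqref{eqxy} and \eqref{eqze} by substitution. Your extra remarks (the $\TT_{2\pi}$-equivariant reduction of the horizontal lift from $\Emb(S,\RR^3)$ to $\Emb(S^1,\RR^2)$, and the normalization of the primitive $\ze$ when passing from $d(\ze_t)$ to $\ze_t$) are correct and only make explicit what the paper leaves implicit.
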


Note that we can't get rid of the factor $\xi_\rh^2+\et_\rh^2$ by starting with an arc-length parametrized curve $\Ga$, because the arc-length parametrization condition is not compatible with the Hamilton equations.

\paragraph{Constants of motion.}
The dynamics \eqref{eqxy}--\eqref{eqze} on $ \R_{1,0}$ 
preserves  the volume enclosed by the surface  $a=\pi\int_{\TT_{2\pi}}\xi^2\et_\rh d\rh$ and the Hamiltonian function:
\[
h(\Si,\be_\Si)
\stackrel{\eqref{hama2}}{=} \int _ \Sigma \beta _{ \Sigma }(n_g) \mu _ \Sigma
=\int_\Si \frac{\ze_\rh}{\sqrt{\xi_\rh^2+\et_\rh^2}}\mu_\Si=2\pi\int_{\TT_{2\pi}}\xi\ze_\rh d\rh.
\]
In addition, by Noether theorem, it also preserves
the momentum map \eqref{euclid}. The relevant subgroup here consists of translations in the vertical direction, which commutes with the subgroup of rotations around the vertical axis.
Setting  $u(x)=e_3$, and using the potential 1-form $\al=\frac12(e _3\x x)^\flat$, whose pullback to the surface of revolution $\Si$ is $\frac12\xi^2d\th$, the corresponding constant of motion is
\[
k(\Si,\be_\Si)=\langle\mathbb{J}(\Si,\be_\Si),(0,e_3)\rangle=\int_\Si\al\wedge\be_\Si
=\int_\Si\frac12\xi^2d\th\wedge\ze_\rh d\rh=\pi\int_{\TT_{2\pi}}\xi^2\ze_\rh d\rh.
\] 
While the Hamiltonian represents the total length of the fibers, 
 $h(\Si,\be_\Si)=\int_{\TT_{2\pi}}2\pi\xi\ze_\rh d\rh=\int_{\TT_\ell}{\rm Length}(C_z)\vartheta_\ell$, this new constant of motion represents the total area of the disks $D_z$ determined by the fibers, \ie
$k(\Si,\be_\Si)=\int_{\TT_{2\pi}}\pi\xi^2\ze_\rh d\rh=\int_{\TT_\ell}{\rm Area}(D_z)\vartheta_\ell$.

\section{Prequantization of coadjoint orbits}\label{Sec_4}

In this section we show that if the coadjoint orbit of vortex sheets $\Gr_a^{S,\be}$ with $\operatorname{Per}(\be)=\ell\ZZ$ satisfies the Onsager-Feynman condition
\begin{equation}\label{OF_condition}
a\ell \in 2\pi\mathbb{Z},
\end{equation} 
then it is prequantizable, with prequantum bundle $( \mathcal{P} , \Theta )$ explicitly constructed as a certain decorated Grassmannian quotiented
by a discrete subgroup of the circle.

The same Onsager-Feynman condition was found in \cite{goldin2} applied to coadjoint orbits of infinite vortex sheets (ribbons/tubes) in $\RR^3$ ``enclosing" a finite volume $\mathcal{V}=a$. Given such an unbounded vortex sheet $(\Si,\ga_\Si)$, this condition was written there as $\mathcal{V}\Om_{\operatorname{tot}}\in 2\pi\ZZ$ with $ \Omega _{\rm tot}$ the total vorticity of the vortex sheet, defined as the integral
\[
\Om_{\operatorname{tot}}:=\int_\Ga dl\cdot(n\x\ga_\Si)
\]
along a curve $\Ga$ crossing the ribbon/tube transversally and oriented such that the frame $\{\gamma _ \Sigma , dl, n\}$ is positively oriented in $ \mathbb{R} ^3  $. This integral does not depend on the choice of $ \Gamma $.
In our setting, using $\be_\Si= i _{\ga_\Si}\mu_\Si= i _{\ga_\Si} i _n\mu$, the total vorticity can be simply written as $\Om_{\operatorname{tot}}=\int_\Ga\be_\Si$ and is equal to the smallest period $\ell$ of the closed 1-form $\be_\Si$.

In this section $S$ is a compact connected surface of genus one, endowed with a closed 1-form $\be$
without zeros and with period group $\ell\ZZ$.
We fix a Cartan developing $b: S \rightarrow \TT_\ell$ of $\be$,
\ie~$ b ^\ast \vartheta_\ell = \beta $ as in Lemma \ref{lemma_31}, uniquely defined up to a constant factor.

\subsection{Circle bundles over manifolds of vortex sheets}

In this paragraph, we define circle bundles over $\Gr^{S,\be}$ and $\Gr_a^{S,\be}$ as preliminary steps in the construction of the prequantum bundle. The total space of the circle bundle over $\Gr^{S,\be}$ is the new type of decorated nonlinear Grassmannian defined as\begin{equation}\label{grom2}
\Gr_{\ex}^{S,\be}:=\Emb(S,\RR^3)\x_{\Diff_+(S)}\O_b,
\end{equation}
(compare with the associated bundle \eqref{grom1}). Here $\O_b$ denotes the $\Diff_+(S)$ orbit of 
the Cartan developing $b\in C^\oo(S,\TT_\ell)$ of $\be$.
The orbit $\O_b$ only depends on $\be=\de b$,
hence the new decorated Grassmannian doesn't depend on the choice of the Cartan developing $b$ for $\be$.
Any another Cartan developing is a constant multiple $b'=zb$ of $b$,
and by the surjectivity of $c_\be$ it can be written as $b'=c_\be(\ph)b=b\o\ph^{-1}\in\O_b$ for some $\ph\in\Diff_+(S,\be)$.

The map 
\begin{equation}\label{fal}
[f,b']\mapsto \left(f(S),f_*b'\right)=(\Si,b_\Si),\quad f\in\Emb(S,\RR^3),\ b'\in\O_b,
\end{equation} 
makes the identification of \eqref{grom2} with 
\begin{align}\label{quot}
\operatorname{Gr}^{S, \beta }_{\rm ex}
&= \left \{(\Si, b_\Si) \in \operatorname{Gr}^S(\RR^3) \times C^\infty(\Si,\TT_\ell):  \exists\;\Psi \in \operatorname{Diff}(S,\Si),\; \Psi ^\ast b_\Si =  b \right\}.
\end{align}

The decorated Grassmannian $\operatorname{Gr}^{S, \beta }_{\rm ex}$ can be seen as the base of a principal bundle 
with structure group the normal subgroup $\operatorname{Diff} _{\ex}(S,\be)  \subseteq \operatorname{Diff}_+(S,\beta)$, defined as the kernel of 
 the flux homomorphism considered in \eqref{definition_c} 
{restricted to $\Diff_+(S,\be)$}, namely
\[
\operatorname{Diff} _{\ex}(S,\be)
:=\Ker c_\be= \{\varphi \in {\operatorname{Diff}_+ (S, \beta )} : b \circ \ph=b\}.
\]
This group formally integrates the Lie subalgebra 
of $\X(S,\be)$:
\[
\mathfrak{X}  _{\rm ex}(S, \beta )
:=\{ v \in \mathfrak{X}  (S,\beta ) :\beta (v)=0\}.
\]
Since the homomorphism $c_ \beta $ is surjective, the quotient group $\Diff_+(S,\be)/\Diff_{\ex}(S,\be)$ is isomorphic to the circle $\TT_\ell$.

To summarize, we formally get three principal bundles with structure groups $\Diff_+(S,\be)$, 
$\Diff_{\ex}(S,\be)$
and $\TT_\ell\simeq \Diff_+(S,\be)/\Diff_{\ex}(S,\be)$ as illustrated in the following diagram
\begin{equation}\label{diagram1}
\begin{xy}
\xymatrix{
\Emb(S,\RR^3)\ar[rrr]^{\Diff_{\ex}(S,\be)}_{ \pi _{\rm ex} ^ \beta }\ar@/_1pc/[rrrd]^{\pi ^ \beta }_{\Diff_+(S,\be)}& & & \Gr_{\ex}^{S,\be}\ar[d]^{\TT_\ell} _{ \bar\Pi }\\
& & &\Gr^{S,\be}\\
}
\end{xy}
\end{equation} 
with the principal bundle projections 
\begin{equation}\label{pai}
\pi^\be_{\ex}(f)=(f(S),f_*b)\quad\text{and}\quad \bar\Pi(\Si, b_\Si) = (\Si, \de b_\Si).
\end{equation}
The commutativity of the diagram, $\bar\Pi\o\pi_{\ex}^\be=\pi^\be$,
follows from $\de f_*b=f_*\de b=f_*\be$. The principal $\TT_\ell$ action  on $\operatorname{Gr}^{S, \beta }_{ \rm ex}$ is reminiscent from the principal $\Diff_+(S,\be)$
action  (by reparametrization) on the manifold of embeddings:
\begin{equation}\label{5}
c_\be(\ph)\cdot\pi_{\ex}^\be(f)=\pi_{\ex}^\be(f\o\ph), \quad\ph\in\Diff_+(S,\be),\;
f\in\Emb(S,\RR^3).
\end{equation} 
It can be written simply as 
\begin{equation}\label{T}
z\cdot(\Si,b_\Si)=(\Si,zb_\Si),\quad z\in \TT_\ell
\end{equation}
because,  by the  identity $c_\be(\ph)b=\ph_*b$, we have:
\begin{align*}
z\cdot(\Si,b_\Si)&=c_\be(\ph)\cdot\pi_{\ex}^\be(f)
\stackrel{\eqref{5}}{=}\pi_{\ex}^\be(f\o\ph){=}(f(\ph(S)),f_*\ph_*b)
=(f(S),c_\be(\ph)f_*b){=}(\Si,zb_\Si).
\end{align*}

In the constant enclosed volume setting, a similar construction yields a circle bundle over $\Gr_{a, \ex}^{S,\be}$, obtained as the associated bundle $\Gr_{a, \ex}^{S,\be}:=\Emb_a(S,\RR^3)\x_{\Diff_+(S)}\O_b$, in a similar way with \eqref{grom2}. Note the identification
\[
\operatorname{Gr}^{S, \beta }_{a, \rm ex}= 
\{(\Si,b_\Si)\in\Gr_{\ex}^{S,\be}:\Si\in\Gr_a^S\}.
\]
See the left hand side of the diagram \eqref{diagram_3} for the analogue to the commuting diagram  \eqref{diagram1} in the constant enclosed volume setting.

\paragraph{Splitting of the tangent space.} There is again a  natural splitting of the tangent space to the associated bundle
$\Emb(S,\RR^3)\x_{\Diff_+(S)}\O_b$
into its horizontal and vertical parts, using the Euclidean metric on $\RR^3$.
The tangent space to the $\Diff_+(S)$ orbit $\O_b$ of $b:S\to\TT_\ell$,
is  $\{ i _v\be:v\in\X(S)\}=\beta( \mathfrak{X}  (S))
=C^\oo(S)$, because the 1-form $\be$ has no zeros.
The analogue of the  decomposition \eqref{split} is
\begin{equation}\label{split2}
T_{(\Si,b _\Si)} \Gr_{\ex}^{S,\be}
=T_\Si\Gr^S\times \beta _\Si( \mathfrak{X}  (\Si))
=C^\oo(\Si) \times C^\infty(\Si).
\end{equation}
The bundle projection $\pi_{\ex}^\be$  being $\Diff(\RR^3)$-equivariant, the tangent map  can be written, similarly to \eqref{T_pi_beta}, as
\begin{equation}\label{infinitesimal_generators_ex} 
T_f\pi^ \beta _{\rm ex} (u \circ f)= \left( u|_\Si\cdot n,{ i  _{u|_\Si^\top}\beta _\Si}\right),
\quad \be_\Si=f_*\be,
\end{equation}
where $u|_\Si=u|_\Si^\top+u|_\Si^\perp$ is the Euclidean orthogonal decomposition of $u\in\X(\RR^3)$ along $\Si=f(S)$.
The right hand side is the  infinitesimal generator of the $\Diff(\RR^3)$ action 
on $\operatorname{Gr}_{\ex}^{S, \beta }$.

\subsection{The prequantum bundle over the manifold of vortex sheets}\label{prequantum}

We show that the Onsager-Feynman prequantization condition $a\ell\in 2\pi\ZZ$ permits the construction of a prequantum bundle $( \mathcal{P} , \Theta )$ over $\Gr_a^{S,\be}$.

Consider on $\Emb_a(S,\RR^3)$ the 1-form given by $\theta:= \widehat{\nu\cdot\be}$, namely,
\begin{equation}\label{144}
\th(u\o f)= \int_S f^* i _u\nu\wedge\be,
\end{equation}
for all $u\in\X_{\rm vol}(\RR^3)$ (see the transitivity Lemma \ref{toto}).
We have seen that the identity $\om= d \th$ with $\omega=\widehat{\mu\cdot\be}$ given in \eqref{tinta}
follows from $\mu= d \nu$, using the hat calculus.
We have also seen that the 2-form $\om$ is $\pi^\be$ basic (see Proposition \ref{simp}).
Now we check that the 1-form $\th$ is $\pi_{\ex}^\be$ basic.
Indeed, the infinitesimal generator $\ze_v$ of $v\in\X_{\ex}(S,\be)$, namely $\ze_v(f)=Tf\o v$, annihilates $\th$:
\begin{equation}\label{gog}
\th(Tf\o v)=\int_S i _vf^*\nu\wedge\be=-\be(v)\int_Sf^*\nu=-a\be(v)=0.
\end{equation}
Thus  the $\Diff_+(S,\be)$ invariant 1-form  $\th$ descends to a $\TT_\ell$ invariant 1-form $\bar{\Th}$ on $\Gr_{a,\ex}^{S,\be}$, \ie 
$(\pi_{\ex}^\be)^*\bar\Th=\th$.
Now the calculation
$(\pi_{\ex}^\be)^*\bar\Pi^*\Om=(\pi^\be)^*\Om=\om
= d \th= d (\pi_{\ex}^\be)^*\bar\Th=(\pi_{\ex}^\be)^* d \bar\Th$
implies that $\bar\Pi^*\Om= d \bar\Th$.
We summarise these facts in the next lemma.

\begin{lemma}\label{123} 
The principal $\TT_\ell$ bundle  
$$
\bar\Pi:\operatorname{Gr}^{S, \beta }_{a, \rm ex} \longrightarrow \operatorname{Gr}^{S, \beta }_a,
\quad\bar\Pi(\Si,b_\Si)=(\Si,\de b_\Si)
$$ 
is endowed with a $\TT_\ell$-invariant 1-form $\bar\Th$
that satisfies $\bar\Pi^*\Om= d \bar\Th$.
\end{lemma}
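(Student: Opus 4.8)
The ingredients have essentially been assembled in the discussion preceding the statement; the plan is to organize them into three steps. The central object is the $1$-form $\th:=\widehat{\nu\cdot\be}$ on $\Emb_a(S,\RR^3)$ defined in \eqref{144}, which makes sense because the $\X_{\vol}(\RR^3)$-action on $\Emb_a(S,\RR^3)$ is infinitesimally transitive by Lemma \ref{toto}. First I would record that $d\th=\om$, with $\om=\widehat{\mu\cdot\be}$ the $2$-form \eqref{tinta}: this is the identity $\widehat{\mu\cdot\be}=d\widehat{\nu\cdot\be}$ of \eqref{omhat}, a formal consequence of $\mu=d\nu$ through the hat calculus.

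The core step is to show that $\th$ is basic for the principal $\Diff_{\ex}(S,\be)$-bundle $\pi^\be_{\ex}:\Emb_a(S,\RR^3)\to\Gr^{S,\be}_{a,\ex}$, i.e.\ invariant under the reparametrization action of $\Diff_{\ex}(S,\be)$ and annihilating the vertical vectors. Invariance under all of $\Diff_+(S,\be)$ (hence of $\Diff_{\ex}(S,\be)$) is a direct check from the defining formula, using $\ph^*\be=\be$ and orientation preservation of $\ph$. For horizontality one evaluates $\th$ on the vertical vector $\ze_v(f)=Tf\o v$ attached to $v\in\X_{\ex}(S,\be)$: since $f^*\nu\wedge\be$ is a $3$-form on the surface $S$ it vanishes, whence $(i_vf^*\nu)\wedge\be=-\be(v)\,f^*\nu$ and, using $\int_Sf^*\nu=a$ on $\Emb_a(S,\RR^3)$,
\[
\th(Tf\o v)=\int_S(i_vf^*\nu)\wedge\be=-\be(v)\int_Sf^*\nu=-a\,\be(v)=0 ,
\]
because $\be(v)=0$ by definition of $\X_{\ex}(S,\be)$; this is precisely \eqref{gog}. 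Hence $\th$ is $\pi^\be_{\ex}$-basic and descends to a $1$-form $\bar\Th$ on $\Gr^{S,\be}_{a,\ex}$ with $(\pi^\be_{\ex})^*\bar\Th=\th$. Since $\th$ is invariant under all of $\Diff_+(S,\be)$ and $\TT_\ell\simeq\Diff_+(S,\be)/\Diff_{\ex}(S,\be)$ acts on $\Gr^{S,\be}_{a,\ex}$ by the residual reparametrization \eqref{T}, the descended form $\bar\Th$ is $\TT_\ell$-invariant.

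Finally I would deduce $\bar\Pi^*\Om=d\bar\Th$ by pulling back along the surjective submersion $\pi^\be_{\ex}$, whose pullback on forms is injective. With $\bar\Pi(\Si,b_\Si)=(\Si,\de b_\Si)$ as in \eqref{pai}, commutativity $\bar\Pi\o\pi^\be_{\ex}=\pi^\be$ is immediate from $\de f_*b=f_*\de b=f_*\be$; combining it with $(\pi^\be)^*\Om=\om$ from Proposition \ref{simp} and the two previous steps gives
\[
(\pi^\be_{\ex})^*\bar\Pi^*\Om=(\pi^\be)^*\Om=\om=d\th=d(\pi^\be_{\ex})^*\bar\Th=(\pi^\be_{\ex})^*d\bar\Th ,
\]
so $(\pi^\be_{\ex})^*(\bar\Pi^*\Om-d\bar\Th)=0$ and therefore $\bar\Pi^*\Om=d\bar\Th$.

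The one point that genuinely needs care --- as opposed to being hat-calculus bookkeeping --- is the descent in the core step: in the formal Fr\'echet setting $\Diff_+(S,\be)$ and $\Diff_{\ex}(S,\be)$ are not honest Lie groups, so ``$\pi^\be_{\ex}$-basic'' has to be read as ``invariant under the structure-group action and killing the infinitesimal generators of $\X_{\ex}(S,\be)$,'' and one leans on the realization of $\Gr^{S,\be}_{a,\ex}$ as the associated bundle $\Emb_a(S,\RR^3)\x_{\Diff_+(S)}\O_b$ --- which is what makes the forgetful $\TT_\ell$-action and the quotient map $\pi^\be_{\ex}$ well defined --- to justify passing from $\th$ to $\bar\Th$.
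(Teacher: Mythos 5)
Your proof is correct and follows essentially the same route as the paper: the same potential $1$-form $\th=\widehat{\nu\cdot\be}$ on $\Emb_a(S,\RR^3)$, the same verification \eqref{gog} that the generators of $\X_{\ex}(S,\be)$ annihilate $\th$ (using $\int_S f^*\nu=a$ and $\be(v)=0$), and the same chain of pullbacks through $\bar\Pi\o\pi^\be_{\ex}=\pi^\be$ to conclude $\bar\Pi^*\Om=d\bar\Th$. The only addition is your explicit caveat about the formal Fr\'echet setting, which the paper leaves implicit.
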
 

In the splitting 
$T_{(\Si,b _\Si)} \Gr_{a,\ex}^{S,\be}
=C_0^\oo(\Si) \times C^\infty(\Si)$ deduced from \eqref{split2},
the 1-form $\bar\Th$ is expressed as
\[
\bar\Theta _{(\Si, b_\Si)}( \rh, \lambda )
=\int_\Si \rh  i  _{n} \nu\wedge\be_\Si-\int_\Si \lambda  \nu .
\]
By the transitivity Lemma \ref{toto}, it is enough to check the formula on an infinitesimal generator 
\eqref{infinitesimal_generators_ex} for $u\in\X_{\rm vol}(\RR^3)$,
namely on  $(\rh,\la)=(u\cdot n, i _{u|_\Si^\top}\be_\Si)$.
Then by \eqref{144} we have $\bar\Th_{(\Si,b_\Si)}(\rh,\la)=\th(u\o f)=\int_\Si  i _u\nu\wedge\be_\Si
=\int_\Si \rh  i  _{n} \nu\wedge\be_\Si-\int_\Si \lambda  \nu $.

In the next lemma, we show that the 1-form  $\bar\Th$ reproduces the  infinitesimal generators of the principal $\TT_\ell$ action only up to the factor $a>0$, {hence a further step is needed to construct the prequantum bundle.}

\begin{lemma}
 For all $(\Si,b_\Si)\in\operatorname{Gr}^{S, \beta }_{a, \rm ex}$,
the infinitesimal generator $\ze_s$ of the $\TT_\ell$ action is
\begin{equation}\label{name}
\bar\Th(\ze_s(\Si,b_\Si))=as, \quad s\in\RR.
\end{equation}
\end{lemma}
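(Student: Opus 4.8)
The plan is to sidestep the explicit coordinate formula for $\bar\Th$ and to use only two facts already established: that $\bar\Th$ is the descent of the $1$-form $\th=\widehat{\nu\cdot\be}$ of \eqref{144} along $\pi^\be_\ex$ (so $(\pi^\be_\ex)^*\bar\Th=\th$), and that the principal $\TT_\ell$-action on $\Gr^{S,\be}_{a,\ex}$ is induced, via the flux homomorphism $c_\be$, from the reparametrization action --- this is equation \eqref{5}. Under these, evaluating $\bar\Th$ on the infinitesimal generator $\ze_s$ reduces to evaluating $\th$ on a vertical vector of the form $Tf\circ v$ over $\Emb_a(S,\RR^3)$, which is exactly the computation carried out in \eqref{gog}.

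Concretely, first I would fix $f\in\Emb_a(S,\RR^3)$ above $(\Si,b_\Si)$ and choose $v\in\X(S,\be)$ with $\be(v)=-s$; such a $v$ exists because $\be$ is nowhere vanishing (e.g.\ rescale the $\be$-dual of any Riemannian metric on $S$), and $\pounds_v\be=0$ (as $\be$ is closed and $\be(v)$ is constant), so its flow $\psi^v_t$ lies in $\Diff_+(S,\be)$. By \eqref{Lie_algebra_hom}, $t\mapsto c_\be(\psi^v_t)$ is a curve in $\TT_\ell$ with initial velocity $d_e c_\be(v)=-\be(v)=s$, so differentiating \eqref{5} at $t=0$ identifies $\ze_s(\Si,b_\Si)=T_f\pi^\be_\ex(Tf\circ v)$. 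Then, using $(\pi^\be_\ex)^*\bar\Th=\th$ and the chain of equalities in \eqref{gog} (none of which used $\be(v)=0$), I obtain $\bar\Th(\ze_s(\Si,b_\Si))=\th(Tf\circ v)=\int_S i_v(f^*\nu)\wedge\be=-\be(v)\int_S f^*\nu$. Finally $\int_S f^*\nu=a$ because $f\in\Emb_a(S,\RR^3)$, which gives $-\be(v)\,a=s\,a$, i.e.\ \eqref{name}.

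The only thing that genuinely needs care is the sign bookkeeping: the minus sign in $d_e c_\be(v)=-\be(v)$ and the minus sign in $(i_v f^*\nu)\wedge\be=-\be(v)\,f^*\nu$ (itself coming from $f^*\nu\wedge\be\equiv 0$ on the surface $S$) must be tracked jointly, and they compound into the stated $+as$. As a cross-check one could instead feed the splitting coordinates of $\ze_s$ into the explicit formula $\bar\Th_{(\Si,b_\Si)}(\rh,\la)=\int_\Si\rh\,i_n\nu\wedge\be_\Si-\int_\Si\la\,\nu$; there the subtlety merely moves to reading off $\ze_s=(0,-s)$ in the splitting \eqref{split2}, consistently with the sign convention adopted in \eqref{infinitesimal_generators_ex}. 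The two routes agree, and no further transitivity or density input is needed beyond what has already been used.
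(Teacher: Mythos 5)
Your proof is correct and follows essentially the same route as the paper: choose $v\in\X(S,\be)$ with $\be(v)=-s$, identify $\ze_s(\Si,b_\Si)$ with $T_f\pi^\be_{\rm ex}(Tf\circ v)$ via the derivative $d_ec_\be(v)=-\be(v)=s$ of the flux homomorphism and equation \eqref{5}, and then evaluate $\th(Tf\circ v)=-a\be(v)=as$ by the chain of equalities in \eqref{gog}, which indeed only uses that $\be(v)$ is constant. The sign bookkeeping and the observation that \eqref{gog} does not require $\be(v)=0$ are both accurate.
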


\begin{proof}
Given $s\in\RR$, we choose $v\in\X(S,\be)$ with $\be(v)=-s$ and we
let $\phi_t^v\in\Diff(S,\be)$ denote its flow. 
By \eqref{Lie_algebra_hom},  the curve $z(t):=c_\be(\phi_t^v)\in\TT_\ell$ satisfies $z'(0)=-\be(v)=s$, hence the infinitesimal generator of $s$ can be written as
$
\ze_s(\Si,b_\Si)=
\tfrac{d}{dt}\big|_0 \left(z(t)\cdot(\Si,b_\Si)\right)
$.
We consider $f\in\Emb_a(S,\RR^3)$ such that  $(\Si,b_\Si)=\pi_{\ex}^\be(f)$.
Then 
\[
z(t)\cdot(\Si,b_\Si)=c_\be(\phi_t^v)\cdot\pi_{\ex}^\be(f)\stackrel{\eqref{5}}{=}
\pi_{\ex}^\be(f\o\phi_t^v).
\]
Now we use $(\pi_{\ex}^\be)^*\bar\Th=\th$ to compute
\begin{align*}
\bar\Th(\ze_s(\Si,b_\Si))
&=\bar\Th(\tfrac{d}{dt}\big|_0\pi_{\ex}^\be(f\o\phi_t^v))
=(\pi_{\ex}^\be)^*\bar\Th\big(\tfrac{d}{dt}\big|_0(f\o\phi_t^v)\big)=\th(Tf\o v)
\stackrel{\eqref{gog}}{=}-a\be(v)=as,
\end{align*}
which shows the identity \eqref{name}.
\end{proof}

\medskip 

Now the prequantization condition $k = \tfrac{a\ell}{2\pi} \in \mathbb{Z}$ comes into play: the multiplication by $a$ is a well defined surjective group homomorphism
\begin{equation}\label{ma}
m_a:\TT_\ell=\mathbb{R}/\ell\mathbb{Z}\longrightarrow \TT_{2\pi}=\mathbb{R}/2\pi\mathbb{Z},
\quad s+ \ell\ZZ\mapsto as+2\pi\ZZ.
\end{equation}
The kernel of $m_a$, denoted by $K$, is isomorphic to $\ZZ_k$, and the quotient group is $\TT_\ell/K\simeq \TT_{2\pi}$.

We define $ \mathcal{P} $ as the quotient of $\Gr_{a,\ex}^{S,\be}$ with respect to the principal $\TT_\ell$ action restricted to the subgroup $K$:
$$q:\Gr_{a,\ex}^{S,\be} \longrightarrow \P:=\Gr_{a,\ex}^{S,\be}/K.$$ 
Notice that the bundle projection  $\bar\Pi$ in Lemma \ref{123} descends to 
\begin{equation}\label{PB}
\Pi:\P\longrightarrow \operatorname{Gr}^{S, \beta }_a,
\quad\Pi\o q=\bar\Pi,
\end{equation}
a principal $\TT_{2\pi}$ bundle for the action on $\P$ that descends from the principal $\TT_\ell$ action on $\Gr_{a,\ex}^{S,\be}$:
\begin{equation}\label{1}
m_a(z)\cdot q(\Si,b_\Si):=q(z\cdot(\Si,b_\Si)),\quad z\in \TT_\ell.
\end{equation}
Here $m_a(z)$ runs through the whole  group $\TT_{2\pi}$.
The diagram below illustrates these constructions.
\begin{equation}\label{diagram_3} 
\begin{xy}
\xymatrix{
\left( \Emb_a(S,\RR^3), \theta \right) \ar[rrr]^{\Diff_{\ex}(S,\be)}_{ \pi _{\rm ex} ^ \beta }\ar@/_1pc/[rrrd]^{\pi ^ \beta }_{\Diff_+(S,\be)}& & & \left( \Gr_{a,\ex}^{S,\be}\ar[d]^{\TT_\ell} _{\bar \Pi }, \bar \Theta \right) \ar[rrr]^K_q & & & \left( \mathcal{P} = \Gr_{a,\ex}^{S,\be}/K,\Theta \right) \ar@/^1pc/[llld]_\Pi^{ \TT_{2 \pi }} \\
& & &\Gr_a^{S,\be}& & &\\
}
\end{xy}
\end{equation}

\begin{theorem} \label{pre}
If the Onsager-Feynman condition $ a\ell \in 2\pi\mathbb{Z}$ holds, the manifold of vortex sheets 
$\operatorname{Gr}_a^{S, \beta }$ endowed with the symplectic form $\Om$ is prequantizable.
A prequantum bundle is given by  the principal $\TT_{2\pi}$ bundle $\Pi:\P\to \operatorname{Gr}^{S, \beta }_a$
endowed with the connection 1-form $\Th$ that descends from $\bar\Th$, \ie $q^*\Th=\bar\Th$. {In particular, the coadjoint orbits of vortex sheets satisfying the Onsager-Feynman condition are prequantizable}.
\end{theorem}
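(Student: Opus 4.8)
The plan is to verify the three defining properties of a prequantum bundle for $(\operatorname{Gr}_a^{S,\be},\Om)$: that $\Pi:\P\to\operatorname{Gr}_a^{S,\be}$ is a principal $\TT_{2\pi}$ bundle, that $\Th$ is a well-defined connection 1-form on it, and that the curvature of $\Th$ equals $\Om$ (with the correct normalisation). Most of the structural work has already been set up in the diagram \eqref{diagram_3} and in Lemma \ref{123}, so the proof is mainly a matter of pushing the 1-form $\bar\Th$ down through the quotient by $K$ and checking compatibility.

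First I would record that $\Pi:\P\to\operatorname{Gr}_a^{S,\be}$ is a principal $\TT_{2\pi}$ bundle: this follows from \eqref{PB} and \eqref{1}, since $\TT_\ell/K\simeq\TT_{2\pi}$ via $m_a$ and the $K$-action on $\Gr_{a,\ex}^{S,\be}$ is free (being the restriction of the free principal $\TT_\ell$ action). Next I would show that $\bar\Th$ descends to a 1-form $\Th$ on $\P$ with $q^*\Th=\bar\Th$: because $\bar\Th$ is $\TT_\ell$-invariant it is in particular $K$-invariant, and by \eqref{name} applied to $s\in\RR$ with $as\in 2\pi\ZZ$ — i.e.\ $s\in\tfrac{2\pi}{a}\ZZ$, which under $p_\ell$ generates exactly $K$ — one gets $\bar\Th(\ze_s)=as\in 2\pi\ZZ$; but the infinitesimal generators of $K$ inside $\TT_\ell$ are those $\ze_s$ with $s$ in the \emph{Lie algebra} of $K$, and since $K$ is discrete its Lie algebra is trivial, so $\bar\Th$ annihilates the (trivial) vertical directions of $q$ and is $K$-basic. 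Hence $\Th$ is well defined and $q^*\Th=\bar\Th$.

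Then I would check that $\Th$ is a principal connection 1-form for the $\TT_{2\pi}$ action \eqref{1}. $\TT_{2\pi}$-invariance of $\Th$ follows from $\TT_\ell$-invariance of $\bar\Th$ together with $q^*\Th=\bar\Th$ and the equivariance $q\circ(z\cdot)=m_a(z)\cdot q$. For the reproduction of infinitesimal generators: if $\eta_t$ denotes the fundamental vector field on $\P$ for $t\in\RR\simeq\operatorname{Lie}(\TT_{2\pi})$, then pulling back by $q$ and using $m_a$ on Lie algebras (which is multiplication by $a$, so $t=as$), one has $q^*(\,i_{\eta_t}\Th\,)=i_{\ze_s}\bar\Th=as=t$ by \eqref{name}; since $q$ is a surjective submersion this gives $i_{\eta_t}\Th=t$ on all of $\P$, which is exactly the normalisation making $\Th$ a connection 1-form for $\TT_{2\pi}=\RR/2\pi\ZZ$. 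Finally, the curvature identity: from $q^*\Th=\bar\Th$ and Lemma \ref{123} we get $q^*d\Th=d\bar\Th=\bar\Pi^*\Om=(\Pi\circ q)^*\Om=q^*\Pi^*\Om$, and since $q$ is a surjective submersion this yields $d\Th=\Pi^*\Om$. Together with the equivariance and normalisation of $\Th$ just established, this shows $(\P,\Th)$ is a prequantum bundle over $(\operatorname{Gr}_a^{S,\be},\Om)$. The last sentence of the theorem — that coadjoint orbits satisfying the Onsager-Feynman condition are prequantizable — is then immediate from Theorem \ref{Pere_Tranquille}, which identifies each connected component of $\operatorname{Gr}_a^{S,\be}$ isomorphically, as a symplectic manifold, with a coadjoint orbit carrying $\om_{\KKS}$; restricting $(\P,\Th)$ to that component gives the prequantum bundle on the orbit.

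The main obstacle I anticipate is not any single computation but rather keeping the bookkeeping straight across the three-level tower \eqref{diagram_3}: one must be careful that the ``multiplication by $a$'' appears on three different objects — as the group homomorphism $m_a:\TT_\ell\to\TT_{2\pi}$, as its derivative on Lie algebras, and as the factor $a$ in \eqref{name} — and that these conspire so that the $a$'s cancel and $\Th$ comes out correctly normalised for $\TT_{2\pi}$ rather than for $\TT_\ell$. This is precisely the point of quotienting by $K=\ker m_a$, and the Onsager-Feynman condition $a\ell\in 2\pi\ZZ$ is exactly what makes $m_a$ well defined on $\TT_\ell$ (so that $K$ is a genuine subgroup, isomorphic to $\ZZ_k$ with $k=\tfrac{a\ell}{2\pi}$), without which none of this goes through.
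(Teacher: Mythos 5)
Your proposal is correct and follows essentially the same route as the paper's proof: invariance of $\Th$ from $\TT_\ell$-invariance of $\bar\Th$, reproduction of the $\TT_{2\pi}$-generators via the $q$-relatedness of $\ze_s$ and $\ze_{as}$ together with identity \eqref{name}, and the curvature identity $d\Th=\Pi^*\Om$ from Lemma \ref{123}. The extra details you supply — that $\bar\Th$ is $K$-basic because $K$ is discrete, and the appeal to Theorem \ref{Pere_Tranquille} for the final claim about coadjoint orbits — are correct and merely make explicit what the paper leaves implicit.
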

\begin{proof} 
The 1-form $\Th$ is $\TT_{2\pi}\simeq\TT_\ell/K$ invariant because $\bar\Th$ is $\TT_\ell$-invariant.
Note that, by \eqref{1}, the infinitesimal generators 
$\ze_s$ on $\Gr_{a,\ex}^{S,\be}$ and $\ze_{as}$ on $ \mathcal{P} $ are $q$-related.
Now, since $q^*\Th=\bar\Th$, the identity \eqref{name} implies that
$\bar\Th$  reproduces the generators of the infinitesimal principal $\TT_{2\pi}$-action.
Thus the 1-form $\Th$ is a principal connection.
That its curvature  is the symplectic form $\Om$, \ie  $ d \Th=\Pi^*\Om$,
follows from Lemma \ref{123}
\end{proof}



\section{Characters and polarizations}\label{cara}

In this section we present a polarization for the momentum $J(\Si,\be_\Si)
\in{\X_{\vol}(\RR^3)}^*$, with $(\Si,\be_\Si)\in\Gr_a^{S,\be}$,
polarization that extends the one defined in \cite{goldin2} to closed vortex sheets.
Assuming that the Onsager-Feynman condition $k = a\ell \in 2\pi\mathbb{Z}$ holds, with $\ell$ the smallest period of $\be_\Si$, we build geometrically a character for this momentum. This character can be extended from the isotropy subgroup of $(\Si, \beta  _ \Sigma )$ to the polarization subgroup.

\subsection{Polarization subgroup}

A character associated to a momentum $m\in\g^*$ is  a group homomorphism
$\chi:G_m\to \TT_{2\pi}$ defined on the isotropy subgroup associated to $m$
(for the coadjoint action),
that integrates the Lie algebra homomorphism obtained by the restriction of 
the momentum to its isotropy Lie algebra
$\g_m$, \ie~$ d _e\chi= m|_{\mathfrak{g}_m}$.  It defines a line bundle  $G\x_{\chi} \mathbb{C}\to G/G_m$ over the coadjoint orbit $\O_m$. 

A polarization group associated to a momentum $m\in\g^*$
is a Lie group $H$  such that $G_m\subset H\subset G$, whose Lie algebra $\h$ satisfies $m|_{[\h,\h]}=0$ (\ie $m|_\h:\h\to\RR$ Lie algebra homomorphism).
Moreover, if the character $\chi$ can be extended to a group homomorphism on $H$,
then we also get a polarization line bundle $G\x_{\chi} \mathbb{C}\to G/H$ 
over the configuration space.
In finite dimensions it is also asked that $\dim (G/H)=\frac12\dim (G/G_m)$, so the configuration space  is
half the dimension of the coadjoint orbit.


Let $(\Si,\be_\Si)\in\Gr_a^{S,\be}$ be a vortex sheet $ \Sigma $ fibered by
vortex lines with vorticity density described by the closed 1-form $ \beta _ \Sigma $.
Let us consider the subgroup ${\operatorname{Diff}_{\vol}(\RR^3)}_{\Si} \subseteq \operatorname{Diff}_{\vol}(\RR^3)$ consisting of all compactly supported volume preserving diffeomorphisms of $\RR^3$ that leave $\Si$ invariant as a set. It contains the isotropy subgroup ${\operatorname{Diff}_{\vol}(\RR^3)}_{( \Sigma , \be_\Si)}$.

\begin{lemma}
The identity component $H$ of ${\operatorname{Diff}_{\vol}(\RR^3)}_{\Si}$
is a polarization group for the momentum $J(\Si,\be_\Si)$.
\end{lemma}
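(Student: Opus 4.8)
The plan is to verify the two defining conditions of a polarization group for $m = J(\Si,\be_\Si)$: first, that the inclusions $\Diff_{\vol}(\RR^3)_{(\Si,\be_\Si)} \subset H \subset \Diff_{\vol}(\RR^3)$ hold at the group level, and second, that the Lie algebra $\h$ of $H$ satisfies $\langle m, [\h,\h]\rangle = 0$, i.e.\ that $m|_\h$ is a Lie algebra homomorphism. The first point is essentially by construction: $H$ is defined as the identity component of the stabilizer $\Diff_{\vol}(\RR^3)_\Si$ of $\Si$ as a set, which contains the stabilizer $\Diff_{\vol}(\RR^3)_{(\Si,\be_\Si)}$ of the decorated surface; one passes to the identity component on the $H$-side (which is harmless, since the isotropy group of a momentum is automatically what one uses, and in the infinite-dimensional setting we work with identity components throughout, as in Theorem~\ref{Pere_Tranquille}). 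So the real content is the Lie-algebraic condition.

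The Lie algebra $\h$ consists of vector fields $u \in \X_{\vol}(\RR^3)$ that are tangent to $\Si$ along $\Si$, equivalently $u|_\Si \in \X(\Si)$ with $u|_\Si$ divergence-free on $(\Si,\mu_\Si)$ (since $u$ preserves $\mu$ and is tangent to $\Si$, its restriction preserves $\mu_\Si = i_n\mu$). For such a $u$, writing $\al$ for a potential 1-form on $\RR^3$ with $i_u\mu = d\al$, the momentum reads $\langle m, u\rangle = \int_\Si \al\wedge\be_\Si = \int_\Si i_\Si^*\al \wedge \be_\Si$. The key observation is that $i_\Si^* \al$ is a \emph{closed} 1-form on $\Si$: indeed $d\, i_\Si^*\al = i_\Si^*(d\al) = i_\Si^*(i_u\mu) = i_{u|_\Si}\mu_\Si$, and since $u|_\Si$ is divergence-free this equals $\pounds_{u|_\Si}\mu_\Si - d\,i_{u|_\Si}\mu_\Si$... more directly, $i_{u|_\Si}\mu_\Si$ is an exact $2$-form on the closed surface $\Si$ only up to the obstruction $\int_\Si i_{u|_\Si}\mu_\Si = 0$ from \eqref{emba} — so $i_\Si^*\al$ represents the class in $H^1(\Si)$ dual (via $\be_\Si$) to the circulation of $u|_\Si$. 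The plan is then: for $u_1, u_2 \in \h$, compute $\langle m, [u_1,u_2]\rangle = \int_\Si i_\Si^*\al_{[u_1,u_2]} \wedge \be_\Si$ where $i_{[u_1,u_2]}\mu = d(i_{u_2}i_{u_1}\mu + \text{something})$; the bracket relation $i_{[u_1,u_2]}\mu = \pounds_{u_1} i_{u_2}\mu - i_{u_2}\pounds_{u_1}\mu = \pounds_{u_1} i_{u_2}\mu = d\,i_{u_1}i_{u_2}\mu + i_{u_1} d\,i_{u_2}\mu = d(i_{u_1}i_{u_2}\mu)$ (using $d\,i_{u_2}\mu = 0$) shows one may take $\al_{[u_1,u_2]} = i_{u_1}i_{u_2}\mu$. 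Hence $\langle m,[u_1,u_2]\rangle = \int_\Si i_\Si^*(i_{u_1}i_{u_2}\mu)\wedge\be_\Si$. Since both $u_1,u_2$ are tangent to $\Si$, the $2$-form $i_{u_2}i_{u_1}\mu$ pulled back to $\Si$ is $i_{u_1|_\Si}i_{u_2|_\Si}\mu_\Si$ paired against the area form — a function times $\mu_\Si$ — but wedging a $2$-form on the surface with the $1$-form $\be_\Si$ gives a $3$-form on a $2$-manifold, which vanishes identically. That is the punchline: $i_\Si^*(i_{u_1}i_{u_2}\mu)$ is a top-degree form on $\Si$, so its wedge with $\be_\Si \in \Om^1(\Si)$ is zero. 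Therefore $\langle m, [\h,\h]\rangle = 0$.

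The main obstacle I anticipate is not this computation but the bookkeeping around potentials and the degree argument: one must be careful that $i_\Si^*(i_{u_1}i_{u_2}\mu)$ is genuinely the pullback of a $2$-form to the $2$-dimensional $\Si$ (so that wedging with any $1$-form kills it), and that the choice of potential $\al_{[u_1,u_2]} = i_{u_1}i_{u_2}\mu$ is legitimate despite $u_i$ being only defined as ambient vector fields whose restrictions are tangent — here one uses that $m$ is independent of the potential (as shown in the proof of the Lemma defining $J$). A secondary subtlety is the infinite-dimensional/Fréchet status of $H$: strictly, $\Diff_{\vol}(\RR^3)_\Si$ is a closed subgroup and a Lie subgroup in the convenient-calculus sense, with Lie algebra the tangent vector fields as above; I would either cite this as standard or phrase everything formally, in keeping with the "(formal) principal bundle" conventions already adopted in the paper. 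With the degree argument in hand, $m|_\h : \h \to \RR$ is a Lie algebra homomorphism, and together with $\Diff_{\vol}(\RR^3)_{(\Si,\be_\Si)} \subset H$ this is exactly the definition of a polarization group, completing the proof.
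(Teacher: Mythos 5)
Your overall strategy is the paper's: you identify $i_{u_1}i_{u_2}\mu$ as a potential $1$-form for the divergence-free bracket $[u_1,u_2]$ (that computation is correct) and then try to show $\int_\Si i_\Si^*(i_{u_1}i_{u_2}\mu)\wedge\be_\Si=0$ using that both $u_i$ are tangent to $\Si$. But the reason you give for the vanishing --- your self-declared ``punchline'' --- is wrong, because of a degree miscount. Since $\mu\in\Om^3(\RR^3)$, the double contraction $i_{u_1}i_{u_2}\mu$ is a $1$-form, not a $2$-form (it has to be: a potential $\al$ satisfies $i_u\mu=d\al$ with $\al\in\Om^1(\RR^3)$). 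Hence $i_\Si^*(i_{u_1}i_{u_2}\mu)\wedge\be_\Si$ is a $2$-form on the $2$-dimensional surface $\Si$, i.e.\ a top-degree form, and such a form does \emph{not} vanish for degree reasons; if it did, the momentum $\langle J(\Si,\be_\Si),X_\al\rangle=\int_\Si\al\wedge\be_\Si$ would be identically zero. The intermediate identity you write, that the pullback of $i_{u_2}i_{u_1}\mu$ to $\Si$ equals $i_{u_1|_\Si}i_{u_2|_\Si}\mu_\Si$, is likewise inconsistent in degree (the left-hand side is a $1$-form on $\Si$, the right-hand side a function). So the step on which your proof hinges would fail as stated.

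The correct reason for the vanishing is the one the paper uses here, and also in the proof that $\Om$ takes the canonical form \eqref{cano}: because $u_1$ and $u_2$ are both tangent to $\Si$, the pullback $i_\Si^*(i_{u_1}i_{u_2}\mu)$ is already the zero $1$-form. Indeed, for $p\in\Si$ and $w\in T_p\Si$ one has $\big(i_\Si^*(i_{u_1}i_{u_2}\mu)\big)(w)=\mu(u_2(p),u_1(p),w)$, and the three vectors $u_1(p),u_2(p),w$ all lie in the $2$-dimensional subspace $T_p\Si\subset\RR^3$, hence are linearly dependent; equivalently, $i_\Si^*(i_{u_1}i_{u_2}\mu)=i_{u_1|_\Si}i_{u_2|_\Si}(i_\Si^*\mu)$ and $i_\Si^*\mu=0$. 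With this substitution for your degree argument the proof closes and coincides with the paper's; the group-level containments and the formal Lie-group status of $H$ are handled as you describe.
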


\begin{proof}
The corresponding Lie algebra
$\h={\mathfrak{X}  _{\vol}(\RR^3)}_\Si$, which consists of divergence free
vector fields that are tangent to $\Si$, satisfies the 
polarization condition $J(\Si,\be_\Si)|_{[\h,\h]}=0$. Indeed, for all $X_{ \alpha _1 }, X_{ \alpha _2 } \in \mathfrak{h}  $, we have
\[
\left\langle J(\Si, \be_{\Si}), [ X_{ \alpha _1 }, X_{ \alpha _2 }] \right\rangle =\int_ \Si (  i  _{ X_{ \alpha _1 }} i  _{ X_{ \alpha _2 }} \mu )\wedge\be_{\Si} =0, 
\]
using that  fact that $  i  _{ X_{ \alpha _1 }} i  _{ X_{ \alpha _2 }} \mu$ is a potential form for
the divergence free vector field $[ X_{ \alpha _1 }, X_{ \alpha _2 }]$
on $\RR^3$.
\end{proof}



\subsection{A geometric construction of the character}\label{333}

Here $S$ is again a compact connected oriented surface of genus one, endowed with a closed 1-form $\be$
without zeros and with discrete period group $\ell\ZZ$.
Let us choose $(\Si,\be_\Si)\in\Gr_a^{S,\be}$ and  $b_\Si:\Si\to\TT_\ell$
 a Cartan developing for $\be_\Si$.
The Fubini theorem on fiber bundles \eqref{fubini} permits to write the momentum 
associated to it as:
\begin{equation}\label{momeo}
\langle J(\Si,\be_\Si),X_\al\rangle=\int_{\Si}\al\wedge\be_\Si =\int_{\TT_\ell}\left(\int_{b_\Si^{-1}(z)}\al\right){\vartheta_\ell} .
\end{equation}

As a first step towards the construction of the character, we integrate the
Lie algebra homomorphism $J(\Si,\be_\Si){|_{\h}}$ to a group homomorphism 
$\tilde\chi:\tilde H\to\RR$, with $\tilde H$ the universal covering group of 
the polarization group $H$, the identity component of $\Diff_{\vol}(\RR^3)_\Si$.
A point in $\tilde H$ is a homotopy class of smooth paths $\ph_t\in H$
with fixed endpoints: the identity $\ph_0$ and $\ph_1=\ph$.
The fact that the group structure on this universal cover 
can be viewed either as composition of diffeomorphisms or as path juxtaposition, 
as done in \cite[Section 10.2]{MDSa98} for the Hamiltonian group,
will be used in the proof of Lemma \ref{mcduff} below.
 
\smallskip

For each $z\in \TT_\ell$, let $C_z=b_\Si^{-1}(z)$ be the fiber over $z$.
The isotopy $\ph_t$ preserves  $\Si$,
hence the 2-chain $C_z^{\ph_t}$ swept out by $C_z$ under
the path of diffeomorphisms $\ph_t$
from time 0 to time 1 stays on $\Si$.
A smooth singular 2-chain $D_z$ in $\RR^3$ with boundary $C_z$
sweeps out a smooth singular 3-chain $D_z^{\ph_t}$ under the path of diffeomorphisms $\ph_t$ from time 0 to time 1.
In the special case when $D_z$ can be chosen inside the genus one surface $\Si$,
then $D_z^{\ph_t}$ stays inside $\Si$, as illustrated in Figure 1.
With these notations, we state the following result.

\begin{figure}
\[
\includegraphics[width=8cm]{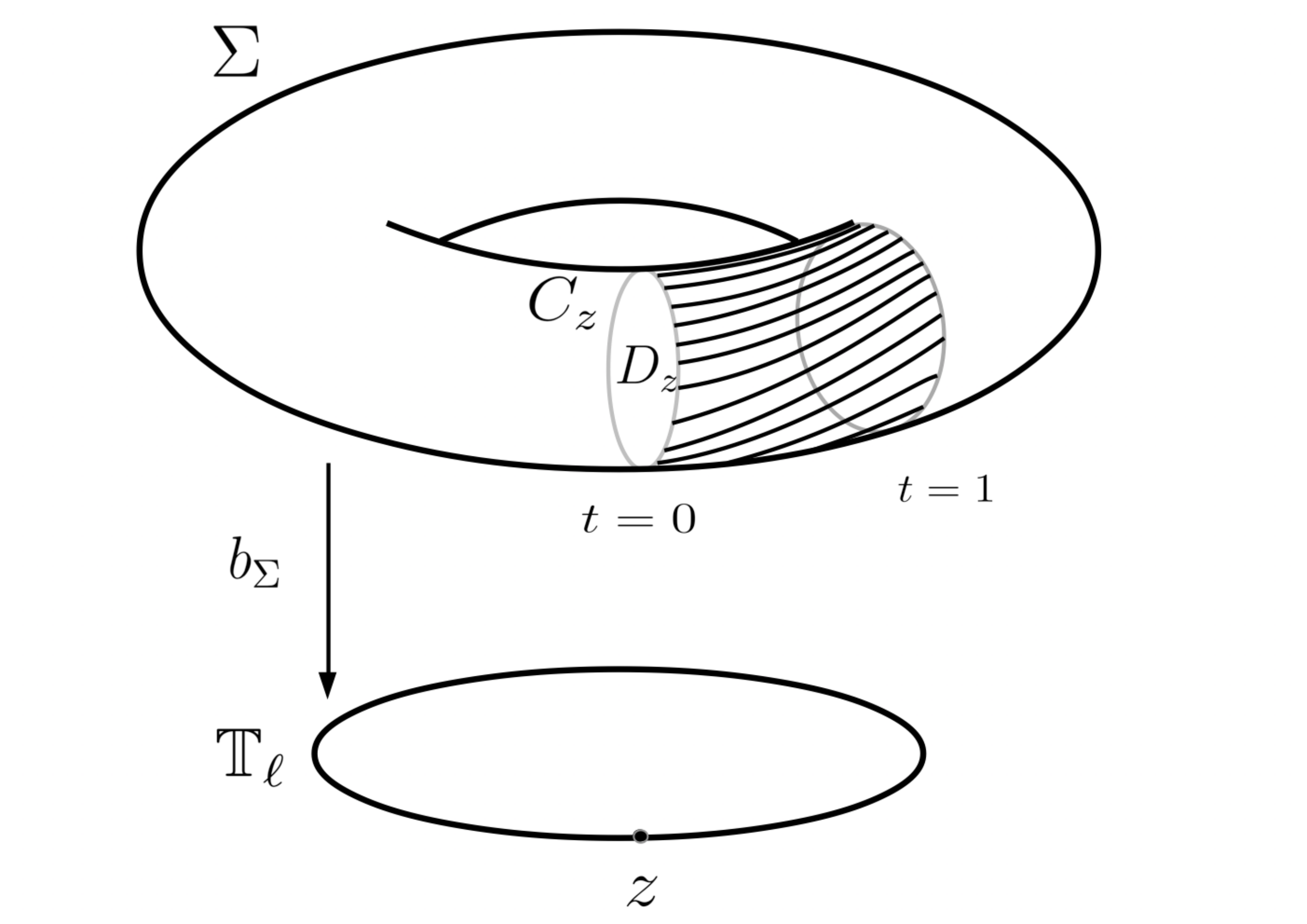}
\]
\caption{An illustration of the construction of the character
for $\varphi$ in the polarization group. For each $z\in \TT_\ell$, the volume enclosed by the hatched area represents the volume $D_z^{\ph_t}$ swept out by $ D_z$  under the path of diffeomorphisms $\ph_t$ from time $0$ to time $1$, with $\varphi_1=\varphi$.}
\end{figure}

\begin{lemma} \label{mcduff}
The map assigning to the homotopy class $\{\ph_t\}$ the total volume of $D_z^{\ph_t}$ over the base  $\TT_\ell$,
\begin{equation}\label{suflu}
\widetilde\chi:\widetilde H\to\RR,\quad \widetilde\chi(\{\ph_t\}):=\int_{\TT_\ell}\left (\int_{D_z^{\ph_t}}\mu\right) \vartheta_\ell,
\end{equation}
is a well defined group homomorphism.
\end{lemma}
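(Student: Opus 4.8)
The statement has two parts: (i) the map $\widetilde\chi$ is well defined, i.e.\ independent of the choices made (the singular $2$-chain $D_z$ with $\partial D_z = C_z$, the representative path $\ph_t$ within its homotopy class, and implicitly the Cartan developing $b_\Si$), and (ii) it is a group homomorphism. The plan is to treat well-definedness first and homomorphism second, using the two descriptions of the group law on $\widetilde H$ (composition of diffeomorphisms versus path concatenation), exactly as in \cite[Section~10.2]{MDSa98}.

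\textbf{Well-definedness.} First I would fix attention on a single fiber $C_z$. If $D_z$ and $D_z'$ are two singular $2$-chains in $\RR^3$ both bounding $C_z$, then $D_z - D_z'$ is a $2$-cycle, hence (since $H_2(\RR^3,\ZZ)=0$) a boundary, say $D_z - D_z' = \partial E_z$ for a $3$-chain $E_z$. The chains $D_z^{\ph_t}$ and $(D_z')^{\ph_t}$ then differ by the $3$-chain swept out by $\partial E_z$ together with $E_z$ and $\ph(E_z)$ at the two ends; since $\mu = d\nu$ is exact, $\int_{D_z^{\ph_t}}\mu - \int_{(D_z')^{\ph_t}}\mu$ reduces via Stokes to boundary terms of the form $\int_{\ph(E_z)}\mu - \int_{E_z}\mu$, which vanishes because $\ph$ is volume-preserving. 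Next, for independence of the path within its homotopy class, I would take a homotopy $\ph_{s,t}$ with fixed endpoints; the $3$-chains $D_z^{\ph_{0,t}}$ and $D_z^{\ph_{1,t}}$ are homologous rel boundary, the relevant $4$-dimensional bordism has boundary contributions only from the two fixed-endpoint families, and again exactness of $\mu$ plus $\ph_1^*\mu = \mu$ kills the discrepancy. Then I would integrate over $z\in\TT_\ell$ against $\vartheta_\ell$, noting that everything depends smoothly on $z$ so the fiber integral makes sense; independence of the Cartan developing follows since changing $b_\Si$ by a constant rotation of $\TT_\ell$ merely relabels the fibers and does not change the total integral.

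\textbf{Homomorphism property.} Here I would use path concatenation as the group law on $\widetilde H$: the product $\{\ph_t\}\cdot\{\psi_t\}$ is represented by first running $\psi_t$ and then running $\ph_t\circ\psi_1 = \ph_t\circ\psi$. For a fixed $z$, the $3$-chain swept out by $D_z$ under the concatenated path decomposes as $D_z^{\psi_t}$ (the part swept while running $\psi_t$) together with the part swept while running $\ph_t\circ\psi$, which is $\psi(D_z)^{\ph_t}$ — the chain swept out by the already-displaced $2$-chain $\psi(D_z)$, whose boundary is $\psi(C_z)$. Since $\psi$ preserves $\Si$ and covers the rotation $z\mapsto c_{\be_\Si}(\psi^{-1})z =: c\cdot z$ of the base, we have $\psi(C_z) = C_{c\cdot z}$, so $\psi(D_z)$ is an admissible choice of bounding $2$-chain for the fiber $C_{c\cdot z}$; by well-definedness $\int_{\psi(D_z)^{\ph_t}}\mu = \int_{D_{c\cdot z}^{\ph_t}}\mu$. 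Integrating over $z$ and using that $z\mapsto c\cdot z$ is a measure-preserving rotation of $(\TT_\ell,\vartheta_\ell)$, the cross term reproduces $\widetilde\chi(\{\ph_t\})$ exactly, and the remaining term is $\widetilde\chi(\{\psi_t\})$; hence $\widetilde\chi(\{\ph_t\}\cdot\{\psi_t\}) = \widetilde\chi(\{\ph_t\}) + \widetilde\chi(\{\psi_t\})$.

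\textbf{Main obstacle.} The routine analytic points (smoothness of the singular-chain families in $z$, legitimacy of Stokes on singular chains, the Fubini-type manipulation) are standard. The real subtlety is the bookkeeping in the homomorphism step: correctly identifying the two pieces of the swept $3$-chain under a concatenated path and recognizing that the displaced piece is governed by the rotated fiber $C_{c\cdot z}$ rather than $C_z$. This is precisely where the flux-homomorphism structure of $\Diff(\Si,\be_\Si)$ enters, and it is the place where the argument of \cite[Section~10.2]{MDSa98} for the Hamiltonian group must be adapted; getting the orientations and the direction of the base rotation right is the one spot demanding care.
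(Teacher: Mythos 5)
There is a genuine gap in your homomorphism step. The polarization group $H$ is the identity component of $\Diff_{\vol}(\RR^3)_\Si$: its elements preserve the surface $\Si$ \emph{as a set} but need not preserve $\be_\Si$. Hence for a general $\ps\in H$ the curve $\ps(C_z)$ is just some closed curve on $\Si$ and is \emph{not} a fiber of $b_\Si$; the identity $\ps(C_z)=C_{c\cdot z}$ with $c=c_{\be_\Si}(\ps^{-1})$, on which your entire cross-term computation rests, is available only on the smaller isotropy subgroup $\Diff_{\vol}(\RR^3)_{(\Si,\be_\Si)}$ (this is exactly the setting of \S\ref{33}, which you are implicitly importing). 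Without it, $\int_{\ps(D_z)^{\ph_t}}\mu$ cannot be identified with $\int_{D_{c\cdot z}^{\ph_t}}\mu$, and the rotation-invariance of $\vartheta_\ell$ has nothing to act on. The paper avoids this entirely by choosing the \emph{other} standard representative of the concatenation: first $\ph_{2t}$, then $\ph_1\circ\ps_{2t-1}$. The second piece of the swept $3$-chain is then $\ph_1\!\left(D_z^{\ps_t}\right)$, whose $\mu$-volume equals $\int_{D_z^{\ps_t}}\ph_1^*\mu=\int_{D_z^{\ps_t}}\mu$ simply because $\ph_1$ is volume preserving --- no fiber identification, no flux homomorphism, and no base rotation are needed. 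If you insist on your ordering, you must separately prove that the swept volume depends only on the homology class of the boundary curve in $\Si$ and that $\ps(C_z)$ is suitably homologous to a fiber, which is considerably more work.

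A secondary weakness: in the path-independence step, the boundary of the $4$-chain $D_z^{\Ps_{s,t}}$ contains, besides the two endpoint families $D_z^{\ph_t}$ and $D_z^{\bar\ph_t}$, the $3$-chain $C_z^{\Ps_{s,t}}$ swept out by the boundary curve under the two-parameter homotopy. The reason this term contributes nothing is \emph{not} exactness of $\mu$ or $\ph_1^*\mu=\mu$; it is that every $\Ps_{s,t}$ preserves $\Si$, so $C_z^{\Ps_{s,t}}$ lies on the two-dimensional surface $\Si$ and the $3$-form $\mu$ integrates to zero over it for dimensional reasons. Your sketch asserts that only the two fixed-endpoint families contribute without justifying the vanishing of this third boundary piece, and this is precisely the one place where the defining property of $H$ enters the well-definedness argument. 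The first part of your well-definedness argument (independence of the choice of $D_z$ via Stokes and volume preservation) is fine and matches the paper's.
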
 

\begin{proof} The volume $\int_{D_z^{\ph_t}}\mu$ swept out by $D_z$ only depends on $C_z$ and $\ph_t$ and not on the choice of $D_z$. Indeed, for $\pa D_z=\pa\bar D_z=C_z$ and $\mu=d\nu$, we get
\[
\int_{D_z^{\ph_t}-\bar D_z^{\ph_t}}\mu
={\int_{\ph_1(D_z-\bar D_z)}\nu}-\int_{D_z-\bar D_z}\nu
=\int_{D_z-\bar D_z}(\ph_1^*\nu-\nu)=0,
\] 
because $D_z-\bar D_z$ is a cycle and $\ph_1^*\nu-\nu\in\Om^2(\RR^3)$ is  exact.
The independence of $b_\Si$ follows by the translation invariance of the volume form $\vartheta_\ell$ on $\TT_\ell$, see the proof of Lemma \ref{lemma_31}.

Next we check the independence of \eqref{suflu} on the path in the homotopy class
$\{\ph_t\}$. Let $\bar\ph_t$ be a path from the identity to $\ph$
homotopic to $\ph_t$ through the homotopy $\Ps_{s,t}\in H$ with fixed endpoints.
The boundary of the 3-chain swept out by $C_z$ under this homotopy,
$C_z^{\Ps_{s,t}}$ is $C_z^{\bar\ph_t}-C_z^{\ph_t}$ and coincides with the boundary of the 3-chain $D_z^{\bar\ph_t}-D_z^{\ph_t}$. We get
\[
\int_{D_z^{\bar\ph_t}}\mu-\int_{D_z^{\ph_t}}\mu
=\int_{C_z^{\bar\ph_t}-C_z^{\ph_t}}\nu
=\int_{\pa C_z^{\Ps_{s,t}}}\nu
=\int_{C_z^{\Ps_{s,t}}}\mu=0
\] 
by dimensional reasons: the 3-chain $C_z^{\Ps_{s,t}}$ lies on the surface $\Si$,
since each diffeomorphism $\Ps_{s,t}$ preserves $\Si$.
Let $\ph_t$ and $\ps_t$ be two paths in $H$ starting at the identity. 
We use the homotopy equivalence of the paths
\begin{align*}
\ph_t\o\ps_t\;\;\text{ and }\;\;
\et_t:=
\begin{cases}
\ph_{2t},\quad &t\in\left[0,\tfrac12\right]\\
\ph_1\o\ps_{2t-1},\quad &t\in\left[\tfrac12,1\right]
\end{cases}
\end{align*}
in the next computation, taking into account that $\ph_1^*\mu=\mu$,
\[
\tilde\chi(\{\ph_t\}\{\ps_t\})=\tilde\chi(\{\et_t\})
=\int_{\TT_\ell}\left (\int_{D_z^{\ph_t}}\mu\right) \vartheta_\ell
+\int_{\TT_\ell}\left (\int_{D_z^{\ps_t}}\ph_1^*\mu\right) \vartheta_\ell
=\tilde\chi(\{\ph_t\})+\tilde\chi(\{\ps_t\}).
\]
This ensures the homomorphism property of $\tilde\chi$.
\end{proof}

\medskip

Each path is the flow of a time dependent vector field $u_t\in\h =\X_{\vol}(\RR^3)_\Si$, namely $\tfrac{d}{dt}\ph_t=u_t\o\ph_t$.
The volume swept out by $D_z$ can also be expressed 
with the help of potential 1-forms $\al_t\in\Om^1(\RR^3)$, \ie $ i _{u_t}\mu= d \al_t$, as
\begin{equation*}
\int_{D_z^{\ph_t}}\mu=\int_0^1\left(\int_{ D_z} i _{\ph_t^*u_t}\mu\right)dt=\int_0^1\left(\int_{ D_z}\ph_t^*( i _{u_t}\mu)\right)dt
=\int_0^1\left(\int_{ C_z}\ph_t^*\al_t\right)dt.
\end{equation*}
A formula for the character in the style of \eqref{momeo} follows:
\begin{equation}\label{fine2}
\widetilde\chi(\{\ph_t\})
=\int_{\TT_\ell}\left(\int_0^1\left(\int_{ C_z}\ph_t^*\al_t\right)dt\right)\vartheta_\ell.
\end{equation}

\begin{lemma}
The group homomorphism $\widetilde\chi:\widetilde H\to\RR$ integrates the restriction to the polarization Lie algebra $\h=\X_{\vol}(\RR^3)_\Si$ 
of the momentum
$J(\Si,\be_\Si)$ associated to $(\Si,\be_\Si)\in\Gr_a^{S,\be}$.
\end{lemma}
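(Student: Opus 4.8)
The goal is to show that $\tfrac{d}{dt}\big|_{0}$ of a path $\{\ph_t\}$ through the identity in $\widetilde H$, applied to $\widetilde\chi$, recovers $\langle J(\Si,\be_\Si),u\rangle$ when $u=\tfrac{d}{dt}\big|_0\ph_t\in\h$. The natural starting point is the integral formula \eqref{fine2} for $\widetilde\chi$. Differentiating it at $t=0$ is a one-parameter-family computation: when we evaluate along a short path whose initial velocity is $u=X_\al$, only the $t=0$ slice of the time-integral survives, and $\ph_0^*\al_0=\al$. So formally
\[
(d_e\widetilde\chi)(u)=\frac{d}{d\epsilon}\Big|_0\widetilde\chi(\{\ph_{\epsilon t}\})
=\int_{\TT_\ell}\left(\int_{C_z}\al\right)\vartheta_\ell,
\]
and by the Fubini identity \eqref{fubini} (equivalently \eqref{momeo}) this equals $\int_\Si\al\wedge\be_\Si=\langle J(\Si,\be_\Si),X_\al\rangle$. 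That is exactly $J(\Si,\be_\Si)|_\h$ evaluated on $u$.

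The main thing to be careful about is justifying the interchange of differentiation with the two nested integrals and making precise how a tangent vector $u\in\h$ at the identity of $H$ lifts to a tangent vector of $\widetilde H$ at the basepoint. I would set this up by picking the specific path $\ph_t$ = flow of the (time-independent) vector field $u$; this is legitimate since $u\in\h=\X_{\vol}(\RR^3)_\Si$ is tangent to $\Si$, so its flow preserves $\Si$ and indeed lies in $H$. For this path one has $u_t\equiv u$ and $\al_t\equiv\al$ in \eqref{fine2}, so
\[
\widetilde\chi(\{\ph_{\epsilon t}\})=\int_{\TT_\ell}\left(\int_0^{\epsilon}\left(\int_{C_z}\ph_s^*\al\right)ds\right)\vartheta_\ell,
\]
and differentiating in $\epsilon$ at $0$ is just the fundamental theorem of calculus, with $\ph_0=\id$; smoothness in $s$ and $z$ gives the differentiation under $\int_{\TT_\ell}$.

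Finally I would remark that the answer is independent of the chosen representing path and of the choice of potential 1-form $\al$ for $u$ (any two differ by $dh$, and $\int_{C_z}dh=0$ since $C_z$ is a cycle), which is consistent with $\widetilde\chi$ being well defined (Lemma \ref{mcduff}) and makes the expression genuinely depend only on $u$. I expect no real obstacle here: the content of the lemma is essentially the derivative of the geometric "swept volume" construction, and the only work is bookkeeping with the nested integrals plus invoking \eqref{fubini}. The one place to be slightly careful is confirming that the derivative computed on the universal cover $\widetilde H$ descends correctly — i.e.\ that $d_e\widetilde\chi$ is the pullback under $\widetilde H\to H$ of the intended Lie algebra homomorphism — but this is automatic since $\widetilde H\to H$ is a local diffeomorphism at the identity.
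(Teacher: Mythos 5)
Your proof is correct and follows essentially the same route as the paper: both take the path in $\widetilde H$ given by the (truncated) flow of $u=X_\al$, apply formula \eqref{fine2}, differentiate at $\epsilon=0$ via the fundamental theorem of calculus, and conclude with the Fubini identity \eqref{momeo}. Your additional remarks on independence of the potential $\al$ and on descent from the universal cover are harmless bookkeeping the paper leaves implicit.
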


\begin{proof}
Let $\phi_t^u$ denote the flow at time $t$ of the vector field $u\in\h$
and consider the path $\ep\mapsto \{\phi_t^u\}_{0\le t\le\ep}$ of homotopy classes in $\widetilde H$. 
Its derivative at $\ep=0$ is equal to  $u$, so  the associated Lie algebra homomorphism is
\begin{align*}
u=X_\al\mapsto\frac{d}{d\ep}\Big|_{\ep=0}\widetilde\chi( \{\phi_t^u\}_{0\le t\le\ep})
&
\stackrel{\eqref{fine2}}{=}\frac{d}{d\ep}\Big|_{\ep=0}\int_{\TT_\ell}\Big(\int_0^\ep\left(\int_{ C_z}(\phi_t^u)^*\al\right)dt\Big)\vartheta_\ell\\
&=
\int_{\TT_\ell}\left(\int_{ C_z}\al\right)\vartheta_\ell\stackrel{\eqref{momeo}}{=}\langle J(\Si,\be_\Si),X_\al\rangle,
\end{align*}
which is the required momentum.
\end{proof}

\begin{theorem}\label{fain}
Let $(\Si,\be_\Si)\in\Gr_a^{S,\be}$ and
assume that the Onsager-Feynman condition $ a\ell \in 2\pi\mathbb{Z}$ holds.
Then a  character on the polarization group $H$ is given by
$$
\chi:H\to \TT_{2\pi},\quad \chi(\ph):=
p_{2\pi}\left(\int_{\TT_\ell}\left(\int_{D_z^{\ph_t}}\mu\right)\vartheta_\ell\right),
$$ 
where $\ph_t\in H$ is any path from the identity to $\ph$.
\end{theorem}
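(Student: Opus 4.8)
The plan is to obtain $\chi$ as the descent along the universal covering $p:\widetilde H\to H$ of the composite $p_{2\pi}\circ\widetilde\chi:\widetilde H\to\TT_{2\pi}$, where $\widetilde\chi$ is the group homomorphism built in Lemma \ref{mcduff}. Since a point of $\widetilde H$ is a homotopy class $\{\ph_t\}$ of paths from the identity to some $\ph\in H$, and $\widetilde\chi(\{\ph_t\})=\int_{\TT_\ell}\big(\int_{D_z^{\ph_t}}\mu\big)\vartheta_\ell$ by definition, the formula in the statement is exactly this descent evaluated on any lift of $\ph$; so the only point to verify is that $p_{2\pi}\circ\widetilde\chi$ kills $\ker p=\pi_1(H)$, i.e.\ $\widetilde\chi(\pi_1(H))\subseteq 2\pi\ZZ$. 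Granting this, $\chi:H\to\TT_{2\pi}$ is well defined and is automatically a homomorphism because $\widetilde\chi$ is; and since $p$ and $p_{2\pi}$ are local diffeomorphisms near the respective identities, $\chi$ integrates the same Lie algebra homomorphism as $\widetilde\chi$, namely $J(\Si,\be_\Si)|_\h$ by the lemma just before the theorem. Restricting $\chi$ to the isotropy subgroup $\Diff_{\vol}(\RR^3)_{(\Si,\be_\Si)}\subseteq H$ then gives a character for the momentum $J(\Si,\be_\Si)$, of which $\chi$ is the promised extension to the polarization group $H$.

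So everything reduces to evaluating $\widetilde\chi$ on a loop $\{\ph_t\}$ based at the identity, i.e.\ $\ph_0=\ph_1=\id$ with $\ph_t\in H$ (hence $\ph_t(\Si)=\Si$) for all $t$. I would fix $z\in\TT_\ell$ and a $2$-chain $D_z$ in $\RR^3$ with $\pa D_z=C_z$. The boundary of the $3$-chain $D_z^{\ph_t}$ is $C_z^{\ph_t}+\ph_1(D_z)-\ph_0(D_z)=C_z^{\ph_t}$, the end terms cancelling because $\ph_0=\ph_1=\id$; Stokes for $\mu= d \nu$ then gives $\int_{D_z^{\ph_t}}\mu=\int_{C_z^{\ph_t}}\nu$. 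Now $C_z^{\ph_t}$, the $2$-chain swept by $C_z$ under the isotopy, lies on $\Si$ (each $\ph_t$ preserves $\Si$ and $C_z\subseteq\Si$) and is a cycle (because $\ph_1|_{C_z}=\ph_0|_{C_z}=\id$). Since $\Si$ is a closed oriented surface of genus one, $H_2(\Si,\ZZ)\cong\ZZ$ is generated by the fundamental class $[\Si]$, so $[C_z^{\ph_t}]=d(z)[\Si]$ for an integer $d(z)$ that depends continuously, hence constantly, on $z\in\TT_\ell$; call the common value $d$. Therefore $\int_{D_z^{\ph_t}}\mu=d\int_\Si i_\Si^*\nu=da$ with $a$ the enclosed volume, and using $\int_{\TT_\ell}\vartheta_\ell=\ell$ we obtain $\widetilde\chi(\{\ph_t\})=da\ell$. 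By the Onsager-Feynman condition $a\ell\in 2\pi\ZZ$ this lies in $2\pi\ZZ$, which is exactly $\widetilde\chi(\pi_1(H))\subseteq 2\pi\ZZ$.

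The hard part is the topological step of the second paragraph: one must be careful that $C_z^{\ph_t}$ is genuinely a cycle \emph{supported on} $\Si$ — this is where the hypothesis that the whole path $\ph_t$, not merely its endpoints, stays in $\Diff_{\vol}(\RR^3)_\Si$ is used — and that its homology class does not depend on $z$, so that the fibre integral over $\TT_\ell$ contributes exactly the factor $\ell$. Once $\widetilde\chi(\pi_1(H))\subseteq 2\pi\ZZ$ is established, the remaining assertions (well-definedness, the homomorphism property, integration of the momentum, and restriction to a character on the isotropy subgroup) are formal consequences of Lemma \ref{mcduff} and the lemma immediately preceding the theorem.
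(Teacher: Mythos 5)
Your proposal is correct and follows essentially the same route as the paper: reduce everything to showing $\widetilde\chi(\pi_1(H))\subseteq 2\pi\ZZ$, identify $\pa D_z^{\ph_t}$ with the $2$-cycle $C_z^{\ph_t}$ supported on $\Si$ for a loop based at the identity, observe that its class is an integer multiple $k[\Si]$ of the fundamental class with $k$ independent of $z$ by continuity, and conclude $\widetilde\chi(\{\ph_t\})=ka\ell\in 2\pi\ZZ$ by the Onsager--Feynman condition. Your explicit remark that the end terms $\ph_1(D_z)-\ph_0(D_z)$ cancel because the path is a loop is a welcome (if minor) elaboration of a step the paper leaves implicit.
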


\begin{proof}
The fundamental group $\pi_1(H)$ consists of homotopy classes of loops $\ph_t\in H$ based at the identity. We only need to check that every such class $\{\ph_t\}\in\pi_1(H)$,
is mapped by the homomorphism  $\tilde\chi$, given in \eqref{suflu}, to $2\pi\ZZ$.
The boundary of the 3-chain $D_z^{\ph_t}$ swept out by $D_z$
 under the loop of diffeomorphisms $\ph_t$
is the  $2$-chain $C_z^{\ph_t}\subseteq \RR^3$
swept out by the fiber $C_z=\pa D_z\subseteq\Si$.
Notice that  each diffeomorphism $\ph_t\in H$
preserves the surface $\Si$ and we get a smooth singular $2$-cycle $C_z^{\ph_t}\subseteq\Si$, which is also integral. Thus its homology class must be 
an integer multiple  of the fundamental class of the surface $\Si$.
The integer will not depend on $z\in \TT_\ell$ (by continuity reasons), hence 
$[C_z^{\ph_t}]=k[\Si]$ with $k\in\ZZ$.
(In the special case when the smooth singular 2-chains $D_z$ can be chosen inside $\Si$, 
the whole 3-chain $D_z^{\ph_t}$ remains inside $\Si$ and it covers it $k$ times.)

The total volume enclosed by $\Si$ is  $a=\int_\Si\nu$.
Then
\[
\widetilde\chi(\{\ph_t\})
=\int_{\TT_\ell}\left (\int_{D_z^{\ph_t}}\mu\right) \vartheta_\ell
=\int_{\TT_\ell}\left (\int_{C_z^{\ph_t}}\nu\right) \vartheta_\ell
=\int_{\TT_\ell}\left (\int_{k\Si}\nu\right) \vartheta_\ell
=\int_{\TT_\ell}(ka)\vartheta_\ell=ka\ell\in 2\pi\ZZ,
\]
by the prequantization condition $ a\ell \in 2\pi\mathbb{Z}$.
\end{proof}

\medskip
The isotropy subgroups $\Diff_{\vol}(\RR^3)_{(\Si,\be_\Si)}$ and $\Diff_{\vol}(\RR^3)_{J(\Si,\be_\Si)}$ coincide.
We have seen in \S\ref{33} that an element $\ph$ of the isotropy subgroup
maps the vortex line $C_z$ into the vortex line $C_{cz}$ with
$c=c_{\be_\Si}(\ph)$.
We show below 
a  character formula that doesn't involve isotopies.

\begin{proposition}
The restriction of the character $\chi$ from the Proposition \ref{fain} to the isotropy subgroup of $(\Si,\be_\Si)\in\Gr_a^{S,\be}$ is also given by the formula
\[
\chi(\ph)=p_{2\pi}\Big(\int_{\Si} d ^{-1}(\ph^*\nu-\nu)\wedge\be_\Si\Big)m_a\big(c_{\be_\Si}(\ph)^{-1}\big),
\]
where $p_{2\pi}:\RR\to\TT_{2\pi}$ denotes the canonical projection and  $m_a:\TT_\ell\to \TT_{2\pi}$ the multiplication \eqref{ma}.
\end{proposition}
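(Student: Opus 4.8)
The plan is to compute the character $\chi(\ph)$ for $\ph$ in the isotropy subgroup directly, by picking a convenient isotopy $\ph_t$ from the identity to $\ph$ and evaluating the defining integral in Theorem \ref{fain}, then matching the result to the claimed closed formula. The starting point is the observation from \S\ref{33} that an isotropy element $\ph$ satisfies $\ph|_\Si^* b_\Si = c\, b_\Si$ with $c = c_{\be_\Si}(\ph)$, so $\ph$ maps the fiber $C_z$ to $C_{cz}$. The difficulty compared to the general case is that the fiber $C_z$ is not fixed by $\ph$ but only permuted, so the 2-chain $C_z^{\ph_t}$ swept out on $\Si$ need not be a cycle; its boundary is $\ph(C_z) - C_z = C_{cz} - C_z$.

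The key step is to split the sweep integral $\int_{\TT_\ell}\big(\int_{D_z^{\ph_t}}\mu\big)\vartheta_\ell$ into a ``radial'' part and an ``angular'' part. First I would use $\mu = d\nu$ and Stokes to rewrite $\int_{D_z^{\ph_t}}\mu$ as a difference of integrals of $\nu$ over $\ph_1(D_z)$ and $D_z$, minus a correction term accounting for the boundary sweep $C_z^{\ph_t}$ lying on $\Si$. Concretely, $\int_{D_z^{\ph_t}}\mu = \int_{\ph_1(D_z)}\nu - \int_{D_z}\nu - \int_{C_z^{\ph_t}}\nu$, where $C_z^{\ph_t}$ is the 2-chain on $\Si$ swept out by $C_z$. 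The first difference, integrated against $\vartheta_\ell$ over $\TT_\ell$, should yield (via the Fubini formula \eqref{fubini} applied with a primitive of $\ph^*\nu-\nu$, exploiting that $\ph$ permutes the fibers by the rotation $z\mapsto cz$ and that $\vartheta_\ell$ is rotation-invariant) the term $\int_\Si d^{-1}(\ph^*\nu-\nu)\wedge\be_\Si$. The second piece, the integral of $\nu$ over the fiber-sweep $C_z^{\ph_t}\subseteq\Si$ summed over $z$, should produce the $a\cdot c_{\be_\Si}(\ph)$ contribution: the 2-chain $\bigcup_z C_z^{\ph_t}$ covers $\Si$ a fractional number of times corresponding to the rotation angle $c$, so $\int_{\TT_\ell}\big(\int_{C_z^{\ph_t}}\nu\big)\vartheta_\ell$ equals $a$ times (a lift of) $c$, and after projecting through $p_{2\pi}$ and using $m_a(s+\ell\ZZ) = as + 2\pi\ZZ$ this becomes $m_a(c_{\be_\Si}(\ph))$, which appears inverted in the statement because it sits on the opposite side of the sweep.

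The main obstacle I expect is bookkeeping the orientations and the direction of the fiber-sweep correctly, so that the rotation $z\mapsto cz$ contributes $m_a(c_{\be_\Si}(\ph)^{-1})$ rather than $m_a(c_{\be_\Si}(\ph))$, and verifying that the ``fractional covering'' argument for $\int_{\TT_\ell}\big(\int_{C_z^{\ph_t}}\nu\big)\vartheta_\ell = a\cdot(\text{lift of }c)$ is rigorous — since $C_z^{\ph_t}$ is only a singular 2-chain on $\Si$, not literally a piece of $\Si$, one must argue via the homology class $[C_z^{\ph_t} - (\text{arc of fibers from }z\text{ to }cz)]\in H_2(\Si)$ being a well-defined multiple of $[\Si]$ and independent of $z$ by continuity. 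Once these two contributions are identified and shown to be independent of the choice of isotopy $\ph_t$ (which follows as in the proof of Lemma \ref{mcduff}, since homotopies of $\ph_t$ with fixed endpoints sweep 3-chains lying on $\Si$, hence contribute nothing by dimension), the formula $\chi(\ph) = p_{2\pi}\big(\int_\Si d^{-1}(\ph^*\nu-\nu)\wedge\be_\Si\big)\, m_a\big(c_{\be_\Si}(\ph)^{-1}\big)$ drops out.
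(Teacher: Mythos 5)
Your argument is correct in outline, but it follows a genuinely different route from the paper's. The paper's proof starts from the potential-form expression \eqref{fine2} of $\widetilde\chi$, chooses the path $\ph_t$ inside the isotropy subgroup, applies Fubini, substitutes $\al_t=d^{-1}(\pounds_{u_t}\nu)-i_{u_t}\nu$ via Cartan's formula, and integrates in $t$: the first summand telescopes to $\int_\Si d^{-1}(\ph^*\nu-\nu)\wedge\be_\Si$, while the second yields $a\int_0^1\be_\Si(u_t)\,dt$, which by \eqref{Lie_algebra_hom} is a lift of $c_{\be_\Si}(\ph)^{-1}$. You instead stay at the chain level of the defining formula \eqref{suflu}: Stokes on the swept $3$-chain isolates the exact term $\int_{C_z}d^{-1}(\ph^*\nu-\nu)$ (which \eqref{fubini} converts into the first factor) and the fiber-sweep $C_z^{\ph_t}\subseteq\Si$, which you compare homologically with an arc of fibers $b_\Si^{-1}([z,z+\tilde c])$ to extract $a\tilde c$ up to $a\ell\ZZ$. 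Both computations are sound and agree. Your version is closer in spirit to the proof of Theorem \ref{fain}, makes the geometric origin of the two factors transparent, and only requires the intermediate diffeomorphisms $\ph_t$ to preserve $\Si$ (i.e.\ to lie in $H$) rather than the pair $(\Si,\be_\Si)$, whereas the paper's computation uses $\ph_t^*\be_\Si=\be_\Si$ at an intermediate step and hence tacitly assumes $\ph$ is joined to the identity inside the isotropy subgroup itself. The price is the orientation bookkeeping you flag yourself, plus one point you should make explicit: both the choice of lift $\tilde c$ and the integer $k$ in $[C_z^{\ph_t}-(\text{arc of fibers})]=k[\Si]$ shift the answer by elements of $a\ell\ZZ$, so your ``fractional covering'' count has a well-defined value after $p_{2\pi}$ only because of the Onsager--Feynman condition $a\ell\in 2\pi\ZZ$; this should be stated rather than left implicit.
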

\begin{proof}
Let $u_t\in\X_{\vol}(\RR^3)_{(\Si,\be_\Si)}$ be the time dependent vector field
associated to the path $\ph_t\in\Diff_{\vol}(\RR^3)_{(\Si,\be_\Si)}$, and let $\al_t$ be the potential 1-forms. Then $d\al_t=\pounds_{u_t}\nu-di_{u_t}\nu$.
We apply the Fubini theorem to \eqref{fine2} and we compute
\begin{align*}
\widetilde\chi(\{\ph_t\})&=\int_{\TT_\ell}\left(\int_0^1\left(\int_{ C_z}\ph_t^*\al_t\right)dt\right)\vartheta_\ell
=\int_0^1\int_\Si\ph_t^*\al_t\wedge\be_\Si dt\\
&=\int_0^1\int_\Si d^{-1}(\ph_t^*\pounds_{u_t}\nu)\wedge\be_\Si dt
-\int_0^1\int_\Si\ph_t^*i_{u_t}\nu\wedge\be_\Si dt\\
&=\int_0^1\int_\Si\frac{d}{dt} d^{-1}(\ph_t^*\nu-\nu)\wedge\be_\Si dt
-\int_0^1\int_\Si i_{u_t}\nu\wedge\be_\Si dt\\
&=\int_\Si d^{-1}(\ph_1^*\nu-\nu)\wedge\be_\Si
+a\int_0^1\be_\Si(u_t) dt,
\end{align*}
using at step four  the fact that $\ph_t$ preserves $\Si$ and $\be_\Si$,
at step five the fact that $u_t$ is tangent to $\Si$, and at the last step
the fact that $\be_\Si(u_t)$ is a constant function on $\Si$.
We know that $p_{2\pi}(\chi(\{\ph_t\}))=\chi(\ph_1)$, so the result follows from:
\begin{align*}
p_{2\pi}\Big(a\int_0^1\be_\Si(u_t) dt\Big)=m_a\Big(p_\ell\Big(\int_0^1\be_\Si(u_t) dt\Big)\Big)
=m_a\big(c_{\be_\Si}(\ph_1)^{-1}\big),
\end{align*}
knowing from \eqref{Lie_algebra_hom} that the derivative of the flux homomorphism $c_{\be_\Si}$ is $-\be_\Si$.
\end{proof}

\paragraph{Acknowledgement.} Both authors were partially supported by the LEA Franco-Roumain ``MathMode". FGB was also partially supported by the ANR project GEOMFLUID, ANR-14-CE23-0002-01. CV was also partially supported by CNCS UEFISCDI, project number  PN-III-P4-ID-PCE-2016-0778.
We would like to thank {Stefan Haller} and Boris Khesin  for very helpful comments and references.


{
\footnotesize

\bibliographystyle{new}

\begin{thebibliography}{300}


\bibitem{Ar1966}
Arnold, V.I. [1966],
Sur la g\'eom\'etrie diff\'erentielle des groupes de Lie de dimension infinie et ses applications \`a l'hydrodynamique des fluides parfaits. \textit{Ann. Inst. Fourier Grenoble} \textbf{16}, 319--361.



\bibitem{GBV2018}
Gay-Balmaz, F. and C. Vizman [2019],
Isotropic submanifolds and coadjoint orbits of the Hamiltonian group, \textit{J. Symp. Geom.}, \textbf{17}(3), 663--702.



\bibitem{goldin2}
Goldin, G.A., Menikoff, R., Sharp, D.H. [1991], 
Quantum vortex configurations in three dimensions, \textit{Phys. Rev. Lett.}, {\bf 67}, 3499--3502.

\bibitem{HV03}
Haller, S. and C. Vizman [2003],
Preprint \url{https://arxiv.org/abs/math/0305089},
longer version of \cite{Haller-Vizman}

\bibitem{Haller-Vizman}
Haller, S. and C. Vizman [2004],
Non--linear Grassmannians as coadjoint orbits,
\textit{Math. Ann.} \textbf{329}, 771--785.

\bibitem{HV20}
Haller, S. and C. Vizman [2020],
Nonlinear flag manifolds as coadjoint orbits,
Preprint \url{https://arxiv.org/abs/2002.04364}

\bibitem{H76}
Hirsch, M.~W. [1976], {\it Differential topology}, Graduate Texts in Math. \textbf{33}, Springer.

\bibitem{Ismagilov}
Ismagilov, R.~S. [1996],
{\it Representations of infinite-dimensional groups},
Translations of Mathematical Monographs {152},
American Mathematical Society, Providence, RI.

\bibitem{IK}
Izosimov, A. and B. Khesin [2018],
Vortex sheets and diffeomorphism groupoids, \textit{Advances in Math.} \textbf{338} 447--501.

\bibitem{khesin}
Khesin, B. [2012]
Symplectic structures and dynamics on vortex membranes,
\textit{Moscow Math. Journal} \textbf{12}, 413--434.

\bibitem{Khesin-Wendt}
Khesin, B. and R.  Wendt [2009]
{\it The geometry of infinite-dimensional groups},
Ergebnisse der Mathematik und ihrer Grenzgebiete {51}, Springer, Berlin. 


\bibitem{Lee2009}
Lee, B. [2009], Geometric structures on spaces of weighted submanifolds, \textit{SIGMA} \textbf{5}, 099, 46 pages.

\bibitem{Lo2012}
Loeschcke, C. [2012], \textit{On the Relaxation of a Variational Principle for the Motion of a Vortex Sheet in Perfect Fluid}, PhD thesis, Rheinische Friedrich-Wilhelms-Universit\"at, Bonn.

\bibitem{Marsden-Weinstein}
Marsden, J. E. and A. Weinstein [1983], Coadjoint orbits, vortices, and Clebsch variables for incompressible fluids, \textit{Phys. D} \textbf{7}, 305--323.

\bibitem{MDSa98}
McDuff, D. and  D. Salamon [2005],
\textit{Introduction to Symplectic Topology}, 
Second Edition, Oxford Graduate Texts in Math. {\bf 27}, Oxford University Press.

\bibitem{Vizman} 
Vizman, C. [2011],
{Induced differential forms on manifolds of functions}, 
{\it Archivum Mathematicum}, {\bf 47}, 201--215.

\bibitem{Weinstein90}
Weinstein, A. [1990], Connections of Berry and Hannay type for moving Lagrangian submanifolds, \textit{Adv. Math.} \textbf{82}, 133--159.


\end{thebibliography}
\addcontentsline{toc}{section}{References}

}

\end{document}